\documentclass{article}

\usepackage[margin=1.4in]{geometry}

\usepackage{natbib}
\usepackage{lmodern}
\usepackage{amssymb}
\usepackage{amsmath}
\usepackage{mathtools}
\usepackage{amsthm}
\usepackage{dsfont}
\usepackage{bbm}
\usepackage{graphicx}
\usepackage{comment}
\usepackage[utf8]{inputenc}
\usepackage{hyperref}
\usepackage{nicefrac}
\usepackage{multicol}
\usepackage{stmaryrd}
\usepackage{tikz}
\usepackage{enumitem}
\usepackage{color}
\newcommand{\semantics}[1]{[\![\mbox{\em $ #1 $\/}]\!]}

\theoremstyle{definition}
\newtheorem{definition}{Definition}[section]

\newtheorem{lemma}[definition]{Lemma}
\newtheorem{fact}[definition]{Fact}
\newtheorem{proposition}[definition]{Proposition}

\newtheorem{corollary}[definition]{Corollary}
\newtheorem{theorem}{Theorem}

\newcommand\blfootnote[1]{%
  \begingroup
  \renewcommand\thefootnote{}\footnote{#1}%
  \addtocounter{footnote}{-1}%
  \endgroup
}

\tikzset{
    vertex/.style={},
    edge/.style={->}
}

\newcommand{\indep}{\perp \!\!\! \perp}

\newcommand{\prob}{\mathbb{P}}

\newcommand{\ETR}{\exists\mathbb{R}}
\newcommand{\NP}{\mathsf{NP}}
\newcommand{\SAT}{\mathsf{SAT}}
\newcommand{\ind}[1]{ \mathds{1}_{#1} }
\newcommand{\mb}[1]{ \mathbf{#1}}
\newcommand\independent{\protect\mathpalette{\protect\independenT}{\perp}}
\def\independenT#1#2{\mathrel{\rlap{$#1#2$}\mkern2mu{#1#2}}}

\usepackage{mdframed}

\title{Probing the Quantitative--Qualitative Divide in Probabilistic Reasoning}

\author{Duligur Ibeling, Thomas Icard,\\ Krzysztof Mierzewski, Milan Moss\'{e}}

\begin{document}

\maketitle


\begin{abstract}
This paper explores the space of (propositional) probabilistic logical languages, ranging from a purely `qualitative' comparative language to a highly `quantitative' language involving arbitrary polynomials over probability terms. While talk of qualitative vs. quantitative may be suggestive, we identify a robust and meaningful boundary in the space by distinguishing systems that encode (at most) additive reasoning from those that encode additive and multiplicative reasoning. The latter includes not only languages with explicit multiplication but also languages expressing notions of dependence and conditionality. We show that the distinction tracks a divide in computational complexity: additive systems remain complete for $\mathsf{NP}$, while multiplicative systems are robustly complete for $\exists\mathbb{R}$. We also address axiomatic questions, offering several new completeness results as well as a proof of non-finite-axiomatizability for comparative probability. Repercussions of our results for conceptual and empirical questions are addressed, and open problems are discussed. 
\end{abstract}

\blfootnote{This material is based upon work supported by the National Science Foundation Graduate Research Fellowship Program under Grant No. DGE-1656518.}


\tableofcontents

\section{Introduction}

For as long as probability has been mathematized, numbers and numerical calculation have been at the center. From the treatment of probability in the Port Royal \emph{Logic} in terms of ratios of frequencies, to the modern axiomatic treatment of Kolmogorov, it has always been standard to formulate probabilistic reasoning in  fundamentally \emph{quantitative} terms. 

It may be surprising that the systematic mathematical analysis of more \emph{qualitative} probabilistic notions is relatively recent---beginning in earnest only after Kolmogorov's landmark treatise \citep{Kolmogorov}---and still occupies not much more than a footnote in the history of the subject.\footnote{\cite{fine1973theories} referred to comparative probability as a `neglected concept', and while the work by Fine and others inspired substantial subsequent development, it is still arguably only a marginal part of the field.} This is despite the fact that qualitative probabilistic locutions are of ancient origin, predating the various numerical concepts by millennia (see \citealt{Franklin2001}), and despite the insistence by many that some of the qualitative notions are somehow primary. In the words of \cite{koopman1940axioms}, qualitative comparisons like `more likely than' are an expression of `the primordial intuition of probability', while the use of numbers is `a mathematical construct derived from the latter under very special conditions' (p. 269). Similar attitudes were expressed earlier  by Keynes, de Finetti, and by many more authors since.

\subsection{Why Qualitative?}

Aside from inherent interest of codifying and systematizing these putatively more basic or fundamental types of judgments, a number of other purported advantages have been adduced for qualitative formulations of probability. For theorists interested in the  \emph{measurement} of psychological states, it has been argued that comparative judgments provide a more sound empirical basis for elicitation than explicit numerical judgments. To quote \cite{Suppes1994}, `The intuitive idea of using a comparative qualitative relation is that individuals can realistically be expected to make such judgments in a direct way, as they cannot when the comparison is required to be quantitative' (p. 18).  Comparative judgments also appear to be more reliable and more stable over time, for example, compared to point estimates. Even when the elicited comparative judgments may be represented by a numerical function, such a numerical representation will be `a matter of convention' chosen for reasons of `computational convenience' \cite[p. 12]{Krantz1971}.

A second rationale for considering qualitative formulations is that they may be seen as more fundamental mathematically, in part by virtue of their flexibility. \cite{Narens1980} suggests, `The qualitative approach provides a powerful method for the scrutinization and revelation of underlying assumptions of probability theory, is a link to empirical probabilistic concerns, and is a point of departure for the formulation of alternative probabilistic concepts' (p. 143). Indeed, it is easy to construct orderings on events that satisfy some of the same principles as standard numerical probability while violating others. For instance, possibility theory violates finite additivity \citep{Dubois1988}, imprecise probabilities violate comparability \citep{Walley},  and so on. In the infinite case, qualitative orders may accommodate intuitions that elude any reasonable quantitative model (see, e.g., \citealt{diBella}).  For those who judge the Kolmogorov axiomatization to be appropriate only for a limited range of applications, this  generality and flexibility offers a theoretical advantage.

Finally, a third rationale is that qualitative systems may be in some respect simpler, a vague sentiment expressed in nearly all works on the topic. There is indeed something intuitive in the idea that reasoning about a (not necessarily total) order on a space of events is easier than reasoning about measurable functions to the real unit interval. 

\subsection{Probing the Distinction}

Whereas a distinction between quantitative and qualitative probabilistic reasoning seems to be ubiquitous, it is not entirely clear what the distinction is exactly.  In the present work we adopt a logical approach to this and related questions. By advancing our technical understanding of a large space of  probabilistic representation languages, we aim to clarify meaningful ways this distinction might be drawn, and more generally to elucidate further the relationships between specifically probabilistic and broader numerical reasoning patterns.

Uncontroversial exemplars of both qualitative and quantitative systems can be identified. At one extreme, a paradigmatically qualitative language is that of comparative probability. This language, which we call $\mathcal{L}_{\text{comp}}$,\footnote{Such a language was formulated in explicit logical terms first by \cite{Segerberg1971} and by \cite{Gardenfors1975}, but can be traced back at least to \cite{Finetti1937}.} involves only basic comparisons $\mathbf{P}(\alpha) \succsim \mathbf{P}(\beta)$ with the intuitive meaning, `$\alpha$ is at least as likely as $\beta$'. We take as a paradigmatically quantitative language one that allows comparison of arbitrary polynomials (sums and products) over probability terms, a language we call $\mathcal{L}_{\text{poly}}$.\footnote{Such a language appeared first in \cite{Scott1966AssigningPT}, and then later in \cite{fagin1990logic}.} While $\mathcal{L}_{\text{comp}}$ is uncontroversially qualitative and (over finite spaces) vastly underdetermines numerical content, $\mathcal{L}_{\text{poly}}$ is manifestly quantitative and is capable of describing probability measures at a relatively fine level of grain. In between these two extremes is a large space of probabilistic representation languages, essentially differing in how much numerical content they can encode. What are the natural classifications of this space? 

We identify one particularly robust classification based on the amount of arithmetic a system (implicitly) encodes, namely the simple distinction between additive and (additive)-multiplicative systems. Thanks to the Boolean structure of events, even $\mathcal{L}_{\text{comp}}$ can already codify substantial additive reasoning. One can also add explicit addition to this language (e.g., as in \citealt{fagin1990logic}). However, much of probabilistic reasoning appeals to notions of (in)dependence and conditionality, which seem to involve not just additive but also \emph{multiplicative} patterns. This would include (purportedly qualitative) conditional comparisons like $(\alpha|\beta) \succsim (\gamma|\delta)$, as studied by \cite{koopman1940axioms}, as well as seemingly even simpler constructs like (qualitative) \emph{confirmation}, whereby `$\beta$ confirms $\alpha$' just in case $(\alpha|\beta)\succ \alpha$ (e.g., \citealt{Carnap}). 

A first hint that this arithmetical classification is meaningful comes from the observation that the additive systems always admit an interpretation in rational numbers, which in turn facilitates a natural alternative interpretation in terms of concatenation on strings. By contrast, even the simplest multiplicative systems can easily force irrational numbers. For instance, if $(A \wedge B)\approx \neg (A \wedge B)$, then for $(A|B) \approx B$ to hold as well, $B$ must have probability $1/\sqrt{2}$. 

\subsection{Overview of Results}

One of our main results is that this distinction between additive and multiplicative systems is matched by a demarcation in computational complexity. The satisfiability problem for additive systems is seen to be  complete for $\mathsf{NP}$-time, thus no harder than the problems of Boolean satisfiability or integer programming. With any modicum of multiplicative reasoning, by contrast, the satisfiability problem becomes complete for the class $\exists\mathbb{R}$, conjectured to be harder than $\mathsf{NP}$. This classification is surprisingly robust, encompassing the most minimal languages encoding qualitative dependence notions, all the way to the largest system we consider, $\mathcal{L}_{\text{poly}}$, allowing arbitrary addition and multiplication.

Within each of these classes---the `purely' additive and the additive-multiplicative---we find a common distinction between systems that only allow comparisons between atomic probability terms and those that admit explicit arithmetical operations over terms. Indeed, on the additive side, $\mathcal{L}_{\text{add}}$ augments $\mathcal{L}_{\text{comp}}$ with the ability to sum probabilities. As just mentioned, this involves no increase in complexity. However, it does lead to a difference in reasoning principles, and in particular axiomatizability. Drawing on work of \cite{vaught1954}, \cite{kraft1959intuitive}, and \cite{fishburn1997failure}, we show that $\mathcal{L}_{\text{comp}}$ is not finitely axiomatizable. In stark contrast, we present a new finite axiomatization  of $\mathcal{L}_{\text{add}}$ that is seen to be simple and intuitive. The work on $\mathcal{L}_{\text{add}}$ is then adapted for $\mathcal{L}_{\text{comp}}$, to give a new completeness argument for the powerful \emph{polarization rule} \citep{kraft1959intuitive}, which has been used to supplant Scott's \citeyearpar{scott1964measurement} infinitary \emph{finite cancellation} schema \citep{burgess2010axiomatizing,Ding2020}. Rather than deriving the finite cancellation axioms and then appealing to Scott's representation result, we show directly how a variable elimination method can be adapted to show completeness. 

From a logical point of view, these results together suggest that disallowing explicit addition might be seen as an artificial restriction: reasoning in $\mathcal{L}_{\text{add}}$ can always be emulated within $\mathcal{L}_{\text{comp}}$,  at the expense of temporarily expanding the space of events. At no cost in computational complexity, $\mathcal{L}_{\text{add}}$ codifies the relevant reasoning principles in simple, intuitive axioms. 

A similar pattern is seen to arise in the multiplicative setting, for systems that fall into the $\exists\mathbb{R}$-complete class.
Here we consider two languages, $\mathcal{L}_{\mathrm{poly}}$ and the conditional comparative language $\mathcal{L}_{\mathrm{cond}}$ alluded to above. The latter involves comparisons of the form $\mathbf{P}(\alpha | \beta) \succsim \mathbf{P}(\gamma | \delta)$, with the interpretation `$\alpha$ is at least as likely given $\beta$ as is $\gamma$ given $\delta$'.
We give a very intuitive finite system for the former language $\mathcal{L}_{\mathrm{poly}}$ by presenting a multiplicative annex to our axiomatization for $\mathcal{L}_{\mathrm{add}}$.
Varying an argument from \cite{ibeling2020probabilistic}, where a polynomial language permitting subtraction (via unary negation) was considered,\footnote{See also \cite{perovic2008probabilistic} in which an (intuitively) even wider polynomial system with explicit rational quantities was strongly axiomatized via an infinitary proof rule.} we show this system complete. The argument turns on a \emph{Positivstellensatz} of semialgebraic geometry.
As for the latter language $\mathcal{L}_{\text{cond}}$, we review pertinent work by \cite{Domotor}.

\subsection{Roadmap}

We begin by defining several probabilistic languages, including $\mathcal{L}_{\text{comp}}$, $\mathcal{L}_{\text{add}}$, $\mathcal{L}_{\text{cond}}$, and $\mathcal{L}_{\text{poly}}$; showing that these languages form an expressivity hierarchy; and introducing the notions from computational complexity used throughout the paper (§\ref{section:space of languages}). We then consider the additive systems $\mathcal{L}_{\text{add}}$ and $\mathcal{L}_{\text{comp}}$, proving the soundness and completeness of axiomatizations of both languages (§§\ref{section: positive linear inequalities}, \ref{section: comparative probability}), discussing issues of finite axiomatizability (§\ref{section: finite axiomatizability}), and rehearsing results that characterize the complexity of reasoning in these systems (§\ref{section: additive complexity}). We turn next to the multiplicative systems, providing an axiomatization of the language $\mathcal{L}_{\text{poly}}$ (§\ref{section: polynomial probability calculus}), investigating axiomatic questions for the language $\mathcal{L}_{\text{cond}}$ (§\ref{section: conditional probability}), and characterizing the complexity of reasoning in these multiplicative systems, including a minimal logic $\mathcal{L}_{\text{ind}}$ allowing only for Boolean combinations of equality and independence statements (§\ref{section: multiplicative complexity}). In §\ref{section: Additive-Multiplicative Divide}, we summarize the results of this discussion: both in the additive and multiplicative settings, systems with explicitly `numerical' operations are more expressive and admit finite axiomatizations, while incurring no cost in complexity; this does not seem to be the case for `purely comparative' systems (as we showed in the additive case, and conjecture for the case of conditional comparative probability). Finally, in §\ref{section: Quantitative-Qualitative Distinction}, we critically discuss various ways of understanding the distinction between qualitative and quantitive probability logics, before concluding in \S\ref{section:conclusion} with some open questions.

\section{A Space of Probabilistic Representation Languages}\label{section:space of languages}

In this section, we define the syntax and semantics of the additive and multiplicative languages which are the primary focus of the paper's discussion, and we illustrate that these languages form an expressivity hierarchy. We also introduce the key notions from computational complexity that are used to characterize the satisfiability problems of these languages.

\subsection{Syntax and Semantics}

Fix a nonempty set of proposition letters $\mathsf{Prop}$, and let $\sigma(\mathsf{Prop})$ be all Boolean combinations over $\mathsf{Prop}$. We will be interested in terms $\mathbf{P}(\alpha)$ for $\alpha \in \sigma(\mathsf{Prop})$, which will be standardly interpreted as the probability of $\alpha$. We first define several sets of probability terms, next define languages of comparisons between terms, and finally provide a semantics for these languages:
\begin{definition}[Terms] Define sets of terms using the following grammars:
\begin{align*}
    \mathsf{a} \in T_{\text{uncond}} &\iff \mathsf{a} :=  \mathbf{P}(\alpha)   & \text{ for any } \alpha \in \sigma(\mathsf{Prop})\\
    \mathsf{a} \in T_{\text{cond}} &\iff \mathsf{a} :=  \mathbf{P}(\alpha |\beta)   & \text{ for any } \alpha,\beta \in \sigma(\mathsf{Prop})\\
    \mathsf{a} \in T_{\text{add}} &\iff \mathsf{a} :=  \mathbf{P}(\alpha) \; | \; \mathsf{a} + \mathsf{b}  & \text{ for any } \alpha \in \sigma(\mathsf{Prop})\\
    \mathsf{a} \in T_{\text{quad}} &\iff \mathsf{a} :=  \mathbf{P}(\alpha)\cdot \mathbf{P}(\beta)  & \text{ for any } \alpha,\beta \in \sigma(\mathsf{Prop})\\
    \mathsf{a} \in T_{\text{poly}} &\iff \mathsf{a} :=  \mathbf{P}(\alpha) \; | \; \mathsf{a} + \mathsf{b} \; | \; \mathsf{a} \cdot \mathsf{b} & \text{ for any } \alpha \in \sigma(\mathsf{Prop}).
\end{align*}
\end{definition}

\begin{definition} Define an operator $\Lambda$ that, for each set of terms $T$, generates a language of comparisons in those terms:
\begin{align*}
    \varphi \in \Lambda(T) \iff \varphi = \mathsf{a} \succsim \mathsf{b} \,|\, \neg \varphi \,|\, \varphi \land \psi \,\,\, \text{for }\textsf{a}, \textsf{b} \in T.
\end{align*}
\end{definition}

\begin{definition}[Additive languages]
 Define $\mathcal{L}_{\text{comp}} = \Lambda(T_{\text{uncond}})$ and $\mathcal{L}_{\text{add}} = \Lambda(T_{\text{add}})$. 
\end{definition}

\begin{definition}[Multiplicative languages]\label{def:multlang}

 Define $\mathcal{L}_{\text{cond}} =~ \Lambda(T_{\text{cond}})$, $\mathcal{L}_{\text{quad}} = \Lambda(T_{\text{quad}})$, and $\mathcal{L}_{\text{poly}} = \Lambda(T_{\text{poly}})$. Define two further languages:
\begin{align*}
    \varphi \in \mathcal{L}_{\text{ind}} \iff \varphi =\,& \mathsf{a} = \mathsf{b} \, | \, \mathsf{a} \indep \mathsf{b} \, |\, \neg \varphi \, | \, \varphi \land \psi & \text{ for any } \mathsf{a}, \mathsf{b} \in T_{\text{uncond}}\\
    \varphi \in \mathcal{L}_{\text{confirm}} \iff \varphi =\,& \mathbf{P}(\alpha|\beta) \succsim  \mathbf{P}(\alpha) \, | \, \mathbf{P}(\alpha) \succsim  \mathbf{P}(\alpha|\beta) \, | \,\\ &\mathbf{P}(\alpha) = \mathbf{P}(\beta)\, | \, \neg \varphi \, | \, \varphi \land \psi & \text{ for any } \alpha,\beta \in \sigma(\mathsf{Prop})
\end{align*}
\end{definition}

We assume that $\mathsf{0}$ is an abbreviation for $\mathbf{P}(\bot)$, where $\bot$ is any Boolean contradiction, and likewise $\mathsf{1}$ is an abbreviation for $\mathbf{P}(\top)$. We let $\mathsf{t} \approx \mathsf{t'}$ abbreviate $\mathsf{t} \succsim \mathsf{t'} \wedge \mathsf{t'} \succsim \mathsf{t}$; and let  $\mathsf{t} \succ \mathsf{t'}$ abbreviate $\mathsf{t} \succsim \mathsf{t'} \wedge \neg \mathsf{t'} \succsim \mathsf{t}$.

Some generally valid axioms and rules (call these principles `core' probability logic) appear in Fig. \ref{fig-base}. 
Note that reflexivity of $\succsim$ and non-negativity, $\mathbf{P}(\alpha)\succsim\mathsf{0}$, both follow from $\mathsf{Dist}$.   
These axioms and rules will be part of every system we study. Call them $\mathsf{AX}_{\textnormal{base}}$.
\begin{figure}[t]
\begin{mdframed} 
\[\underline{\textbf{The Axioms and Rules of }\mathsf{AX}_{\textnormal{base}}}\] \vspace{-.3in} 
\begin{eqnarray*}
\mathsf{Lin}. & & \mbox{Transitivity and comparability of }\succsim  \\
\mathsf{Bool}. & & \mbox{Boolean reasoning} \\
\mathsf{Dist}. & & \mathbf{P}(\alpha)\succsim \mathbf{P}(\beta)\mbox{ whenever }\models \beta \rightarrow \alpha \\
\mathsf{NonDeg}. & & \neg \mathbf{P}(\bot) \succsim \mathbf{P}(\top) 
\end{eqnarray*}
\end{mdframed}
\caption{Core axioms $\mathsf{AX}_{\textnormal{base}}$ common to all systems.\label{fig-base}}
\end{figure}

\begin{definition}[Semantics]
A model is a probability space $\mathfrak{M} = (\Omega, \mathcal{F}, \mathbb{P},\semantics{\cdot})$, such that $\semantics{\cdot}: \sigma(\mathsf{Prop}) \rightarrow \wp{(\Omega)}$, with  $\semantics{A} \in \mathcal{F}$ for each $A \in \mathsf{Prop}$. It follows that $\semantics{\alpha} \in \mathcal{F}$ for all $\alpha \in \sigma(\mathsf{Prop})$. 

The denotation $\mathbf{P}(\alpha)^{\mathfrak{M}}$ of a basic probability term $\mathbf{P}(\alpha)$ is defined to be $\mathbb{P}(\semantics{\alpha})$, while the donations of complex terms $\mathsf{a} + \mathsf{b}$ and $\mathsf{a} \cdot \mathsf{b}$ are defined in the usual recursive manner. 

We define truth of basic inequality statements in the obvious way:
\begin{eqnarray*}
\mathfrak{M}\models  \mathsf{a} \succsim \mathsf{b}  & \quad \iff  \quad &
\mathsf{a}^{\mathfrak{M}} \geq \mathsf{b}^{\mathfrak{M}},
\end{eqnarray*} while Boolean combinations are evaluated as usual. In $\mathcal{L}_{\text{cond}}$, $\mathcal{L}_{\text{ind}}$, and $\mathcal{L}_{\text{confirm}}$, we specify their atomic clauses separately:
\[\begin{tabular}{l c l}
$\mathfrak{M}\models  \mathbf{P}(\alpha | \beta)\succsim \mathbf{P}(\beta | \delta)$   & \quad $\iff$  \quad \quad  & $\mathbb{P}(\semantics{\alpha \land \beta}) \mathbb{P} (\semantics{\delta}) \geq \mathbb{P}(\semantics{\beta \land \delta})  \mathbb{P}(\semantics{\beta})$ \\
& & \\
$\mathfrak{M}\models  \mathbf{P}(\alpha) \indep \mathbf{P}(\beta)$  & \quad $\iff$  \quad\quad  &  
$\mathbb{P}(\semantics{\alpha \wedge \beta}) = \mathbb{P}(\semantics{\alpha})\mathbb{P}(\semantics{\beta})$ \\
&  & \\ 
$\mathfrak{M}\models  \mathbf{P}(\alpha|\beta) \succsim  \mathbf{P}(\alpha)$  & \quad $\iff$  \quad\quad  &  $\mathbb{P}(\semantics{\alpha \wedge \beta}) \geq \mathbb{P}(\semantics{\alpha})\mathbb{P}(\semantics{\beta})$\\
&  & \\ 
$\mathfrak{M}\models  \mathbf{P}(\alpha) \succsim  \mathbf{P}(\alpha|\beta)$  & \quad $\iff$  \quad\quad  &  $\mathbb{P}(\semantics{\alpha \wedge \beta}) \leq \mathbb{P}(\semantics{\alpha})\mathbb{P}(\semantics{\beta})$.
\end{tabular}\]
Equality statements and rational terms are evaluated as expected. 
\end{definition}

\subsection{An Expressivity Hierarchy}

The languages introduced in the preceding section form an expressivity hierarchy. For a formula $\varphi$ in any of these languages, let Mod$(\varphi) = \{\mathfrak{M}: \mathfrak{M} \models \varphi\}$ be the class of its models. For two languages $\mathcal{L}_{1}$ and $\mathcal{L}_{2}$, we say that $\mathcal{L}_{2}$ is \emph{at least as expressive as} $\mathcal{L}_{1}$ if for every $\varphi \in \mathcal{L}_1$ there is some $\psi \in \mathcal{L}_2$ such that Mod$(\varphi) = $ Mod$(\psi)$. We say $\mathcal{L}_{2}$ is \textit{strictly more expressive} than $\mathcal{L}_{1}$ if $\mathcal{L}_{2}$ is at least as expressive as $\mathcal{L}_{1}$ but not vice versa. Two languages are \textit{incomparable in expressivity} if neither is at least as expressive as the other. Figure \ref{EXP} illustrates the expressivity hierarchy among the languages introduced above. In particular, $\mathcal{L}_{\mathrm{comp}}$ is less expressive than both $\mathcal{L}_{\mathrm{add}}$ and $\mathcal{L}_{\mathrm{cond}}$, both of which are less expressive than the language $\mathcal{L}_{\mathrm{poly}}$. 

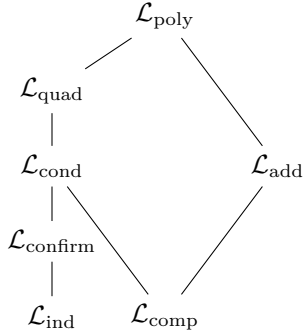
\begin{figure}
\begin{center}
    \begin{tikzpicture}
    \node (a) at (0,0) {$\mathcal{L}_{\text{comp}}$};
    \node (b1) at (-1.5,0){$\mathcal{L}_{\text{ind}}$};
       \node (b2) at (-1.5,1) {$\mathcal{L}_{\text{confirm}}$};
      \node (b3) at (-1.5,2) {$\mathcal{L}_{\text{cond}}$ };
        \node (b4) at (-1.5,3) {$\mathcal{L}_{\text{quad}}$ };
    \node (c1) at (1.5,2) {$\mathcal{L}_{\text{add}}$};
   \node (d) at (0,4) {$\mathcal{L}_{\text{poly}}$};

     \draw (a) -- (b3);
    \draw (a) -- (c1);
     \draw (b1) -- (b2);
    \draw (b2) -- (b3);
       \draw (b3) -- (b4);
       \draw (b4) -- (d);
    \draw (c1) -- (d);
    \end{tikzpicture}
    \end{center}
    \caption{The expressivity hierarchy.}\label{EXP}
    \end{figure}

In this section, we provide examples to establish the above-pictured hierarchy, with each segment between a pair of languages indicating a \textit{strict} increase in expressivity. In particular, we will show that $\mathcal{L}_{\mathrm{comp}}$ is strictly less expressive than both $\mathcal{L}_{\mathrm{add}}$ and $\mathcal{L}_{\mathrm{cond}}$, and that both of these are strictly less expressive than the language $\mathcal{L}_{\mathrm{poly}}$. In fact, in all but one case we show something stronger: namely that (fixing $\Omega$,  $\mathcal{F}$, and $\semantics{\cdot}$) there are two measures $\prob_1$ and $\prob_2$ which are indistinguishable in the less expressive language but which can be distinguished by some statement in the more expressive one. This stronger result is not possible in the case of $\mathcal{L}_{\text{add}}$ and $\mathcal{L}_{\text{poly}}$, because of the following: 
\begin{fact}
Any distinct measures $\mathbb{P}_1, \mathbb{P}_2$ are distinguishable in $\mathcal{L}_{\text{add}}$.
\end{fact}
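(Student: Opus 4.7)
The plan is to exploit that the summation operator in $\mathcal{L}_{\text{add}}$, together with the constant term $\mathbf{P}(\top)$, lets one encode arbitrary rational threshold comparisons of a single probability term. So distinguishing two measures reduces to finding one Boolean event on which they disagree and then squeezing a rational between their respective values.

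First, I would note that if $\mathbb{P}_1$ and $\mathbb{P}_2$ are distinct as models -- that is, they induce different truth values on some formula in \emph{any} of the languages under consideration -- then they must differ on some Boolean event, i.e.\ there is an $\alpha \in \sigma(\mathsf{Prop})$ with $\mathbb{P}_1(\semantics{\alpha}) \neq \mathbb{P}_2(\semantics{\alpha})$. (If they agreed on every $\semantics{\alpha}$, the atomic clauses of the semantics would deliver identical truth values, so they would be indistinguishable even in $\mathcal{L}_{\text{poly}}$.) Without loss of generality assume $\mathbb{P}_1(\semantics{\alpha}) > \mathbb{P}_2(\semantics{\alpha})$, and by the density of $\mathbb{Q}$ in $\mathbb{R}$ pick a rational $n/m$ with $\mathbb{P}_1(\semantics{\alpha}) \geq n/m > \mathbb{P}_2(\semantics{\alpha})$, where $n,m$ are positive integers.

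Next, writing $k\cdot \mathsf{t}$ as shorthand for the term $\mathsf{t} + \mathsf{t} + \cdots + \mathsf{t}$ with $k$ summands (which lies in $T_{\text{add}}$ by the grammar), I would form the $\mathcal{L}_{\text{add}}$ formula
\[
\varphi \;:=\; m\cdot \mathbf{P}(\alpha) \;\succsim\; n\cdot \mathbf{P}(\top).
\]
Unwinding the semantics, $\mathfrak{M}_i \models \varphi$ iff $m\,\mathbb{P}_i(\semantics{\alpha}) \geq n\cdot 1$, iff $\mathbb{P}_i(\semantics{\alpha}) \geq n/m$. By choice of $n/m$ this holds for $\mathbb{P}_1$ and fails for $\mathbb{P}_2$, giving the required discriminating formula.

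The only subtle point is the preliminary reduction from ``distinct measures'' to ``measures disagreeing on some $\semantics{\alpha}$''; once that is in place, the argument is entirely routine, relying only on closure of $T_{\text{add}}$ under addition and the availability of $\mathbf{P}(\top)=\mathsf{1}$ as a scale. Compared to $\mathcal{L}_{\text{comp}}$, whose atomic formulas only record the \emph{qualitative} ordering of events and can therefore be duplicated by many quantitatively inequivalent measures, the ability to sum is exactly what suffices to pin down every real-valued parameter of a measure up to arbitrarily fine rational approximation.
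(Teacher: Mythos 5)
Your proposal is correct and follows essentially the same route as the paper's proof: squeeze a rational $n/m$ between the two values of some event on which the measures disagree, and express the threshold via $m$-fold and $n$-fold sums of $\mathbf{P}(\alpha)$ and $\mathbf{P}(\top)$. Your preliminary remark that distinct measures must disagree on some $\semantics{\alpha}$ is a reasonable (and slightly more careful) gloss on what the paper leaves implicit.
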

\begin{proof}
If $\mathbb{P}_1, \mathbb{P}_2$ are distinct measures, without loss of generality, $\mathbb{P}_1(\alpha) < n/m < \mathbb{P}_2(\alpha)$ for some natural numbers $n$ and $m$. Thus $\mathbb{P}_1(\alpha)$ added to itself $m$ times is at most $\mathbb{P}_1(\top)$ added to itself $n$ times, while this is not true of $\mathbb{P}_2$; thus $\mathbb{P}_1, \mathbb{P}_2$ are distinguishable in $\mathcal{L}_{\text{add}}$.
\end{proof}


\paragraph{Additive systems.} First, we observe that $\mathcal{L}_{\mathrm{comp}}$ is less expressive than $\mathcal{L}_{\mathrm{add}}$. Let $\mathbb{P}_1(A) = \nicefrac{2}{3}$ and $\mathbb{P}_2(A) = \nicefrac{3}{5}$. The order on the events $A, \neg A, \top, \bot$ induced by these measures is the same, but, for instance, $\mathbb{P}_1(A) = \mathbb{P}_1(\neg A)+\mathbb{P}_1(\neg A)$, while $\mathbb{P}_2(A) \neq \mathbb{P}_2(\neg A)+\mathbb{P}_2(\neg A)$. 

To show that $\mathcal{L}_{\text{add}}$ is less expressive than $\mathcal{L}_{\text{poly}}$, we simply identify a formula $\varphi \in \mathcal{L}_{\text{poly}}$ such that there is no $\psi \in \mathcal{L}_{\text{add}}$ with Mod$(\varphi) = $ Mod$(\psi)$. For this we can take the example mentioned in the introduction: $\mathbb{P}(A \wedge B)=\mathbb{P}(\neg A \vee \neg B)$ and $\mathbb{P}(A|B) = \mathbb{P}(B)$. (This is in fact expressible already in $\mathcal{L}_{\text{cond}}$.) As mentioned above, this enforces that $\mathbb{P}(B)= 1/\sqrt{2}$, while it follows from Corollary~\ref{cor:gen} below that every formula in $\mathcal{L}_{\text{add}}$ has models in which every probability is rational.

\paragraph{Multiplicative systems.} First, we show that $\mathcal{L}_{\text{comp}}$ is no more expressive than $\mathcal{L}_{\text{ind}}$. Define the measures
\begin{align*}
    \prob_1(A \land B) = \nicefrac{25}{36},\; \prob_1(A \land \neg B) = \prob_1(\neg A \land B) = \nicefrac{5}{36},\; \prob_1(\neg A \land \neg B) = \nicefrac{1}{36};\\
    \prob_2(A \land B) = \nicefrac{27}{36},\; \prob_2(A \land \neg B) = \prob_2(\neg A \land B) = \nicefrac{4}{36},\; \prob_2(\neg A \land \neg B) = \nicefrac{1}{36}.
\end{align*}
Then $\prob_1(A \land B) = \prob_1(A) \prob_1(B)$, while $\prob_2(A \land B) \neq \prob_2(A) \prob_2(B)$, so that the measures are distinguishable in $\mathcal{L}_{\text{ind}}$. However, for $i \in \{1,2\}$ we have
\begin{multline*}
    \prob_i(A \lor B) > \prob_i(A)
    = \prob_i(B) > \prob_i(A \land B) > \prob_i(\neg A \lor \neg B) > \\ \prob_i(A \land \neg B)= \prob_i(\neg A \land B) > \prob_i(\neg A \land \neg B),
\end{multline*}
so that the measures $\prob_1$ and $\prob_2$ are not distinguishable in $\mathcal{L}_{\text{comp}}.$

Next, we show that $\mathcal{L}_{\text{ind}}$ is less expressive than $\mathcal{L}_{\text{confirm}}$. Define the measures
\begin{align*}
    \prob_1(A \land B) = \nicefrac{23}{36},\; \prob_1(A \land \neg B) = \prob_1(\neg A \land B) = \nicefrac{6}{36},\; \prob_1(\neg A \land \neg B) = \nicefrac{1}{36};\\
    \prob_2(A \land B) = \nicefrac{27}{36},\; \prob_2(A \land \neg B) = \prob_2(\neg A \land B) = \nicefrac{4}{36},\; \prob_2(\neg A \land \neg B) = \nicefrac{1}{36}.
\end{align*}
The above measures satisfy the same order satisfied by the measures in the preceding example, and $A$ and $B$ are not independent under either measure, so the measures are indistinguishable in $\mathcal{L}_{\text{ind}}$. However, $\mathbb{P}_1(A |B ) < \mathbb{P}_1(A)$, while $\mathbb{P}_2(A |B ) > \mathbb{P}_2(A)$, so that the measures are distinguishable in $\mathcal{L}_{\text{confirm}}$.

Then, note that $\mathcal{L}_{\text{confirm}}$ and $\mathcal{L}_{\text{comp}}$ are incomparable in expressivity. 
The following are $\mathcal{L}_{\text{confirm}}$-equivalent:
\begin{align*}
    \prob_1(A \land B) = \nicefrac{1}{9},\; \prob_1(A \land \neg B) = \nicefrac{1}{3},\; \prob_1(\neg A \land  B) = \nicefrac{5}{9},\; \prob_1(\neg A \land \neg B) = 0;\\
    \prob_2(A \land B) = \nicefrac{1}{9},\; \prob_2(A \land \neg B) = \nicefrac{5}{9},\; \prob_2(\neg A \land  B) = \nicefrac{1}{3},\; \prob_2(\neg A \land \neg B) = 0.
\end{align*}
These measures are, however, evidently distinguishable in $\mathcal{L}_{\text{comp}}$. A fortiori, they are distinguishable in $\mathcal{L}_{\text{cond}}$. Thus this example also shows that $\mathcal{L}_{\text{confirm}}$ is strictly less expressive than $\mathcal{L}_{\text{cond}}$.


After that, we show that $\mathcal{L}_{\mathrm{cond}}$ is less expressive than $\mathcal{L}_{\mathrm{quad}}$. Let $\alpha,\beta,\gamma$ be mutually unsatisfiable events. Define $\prob_1(\alpha)= \nicefrac{3}{20}, \prob_1(\beta)= \nicefrac{4}{20}, \prob_1(\gamma)=\nicefrac{13}{20}$, while $\prob_2(\alpha)=\nicefrac{3}{20} - .03, \prob_2(\beta)=\nicefrac{4}{20} - .01, \prob_2(\gamma)=\nicefrac{13}{20} + .04$. One can verify by exhaustion that all comparisons of conditional probabilities agree between $\prob_1$ and $\prob_2$, thus they are indistinguishable in $\mathcal{L}_{\mathrm{cond}}$. At the same time, there are statements in $\mathcal{L}_{\mathrm{quad}}$ in which the models differ. For example, $\prob_1(\gamma)\prob_1(\beta) <  \prob_1(\alpha)$, whereas $\prob_2(\gamma)\prob_2(\beta)  > \prob_2(\alpha)$.

Finally, we show that $\mathcal{L}_{\mathrm{quad}}$ is less expressive than $\mathcal{L}_{\mathrm{poly}}$. Defining $\mathbb{P}_1(A) = \nicefrac{2}{3}$ and $\mathbb{P}_2(A) = \nicefrac{3}{4}$ we find that for $i \in \{1,2\}$
\[
\mathbb{P}_i(A) > \mathbb{P}_i(A)^2 > \mathbb{P}_i(\neg A) > \mathbb{P}_i(A) \cdot \mathbb{P}_i(\neg A) > \mathbb{P}_i(\neg A)^2,
\]
while $\mathbb{P}_1 (A)^3 < \mathbb{P}_1(\neg A)$ and $\mathbb{P}_2 (A)^3 > \mathbb{P}_2(\neg A)$, so that the measures are not distinguishable in $\mathcal{L}_{\mathrm{quad}}$ but are distinguishable in $\mathcal{L}_{\mathrm{poly}}$.


Summarizing the results of this section: 
\begin{theorem} 
Figure~\ref{EXP} describes an expressivity hierarchy; each language is less expressive than any higher-up language to which it is path-connected and incomparable in expressivity to all other languages. In words, the language $\mathcal{L}_{\text{comp}}$ is less expressive than $\mathcal{L}_{\text{add}}$, which is again less expressive than $\mathcal{L}_{\text{poly}}$. Similarly, the languages $\mathcal{L}_{\text{comp}}$, $\mathcal{L}_{\text{cond}}$, $\mathcal{L}_{\text{quad}}$, and $\mathcal{L}_{\text{poly}}$ form an expressivity hierarchy, with each language in the series less expressive than the one that follows it, as do the languages $\mathcal{L}_{\text{ind}}$, $\mathcal{L}_{\text{confirm}}$, and $\mathcal{L}_{\text{cond}}$. The languages $\mathcal{L}_{\text{ind}}$ and $\mathcal{L}_{\text{confirm}}$ are incomparable in expressivity with the languages $\mathcal{L}_{\text{comp}}$ and $\mathcal{L}_{\text{add}}$.
\end{theorem}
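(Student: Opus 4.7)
The plan is to verify three kinds of claim---(i) every upward edge in Figure~\ref{EXP} is a weak expressivity inclusion, (ii) each such inclusion is strict, and (iii) each of the four side-pairs in $\{\mathcal{L}_{\text{ind}}, \mathcal{L}_{\text{confirm}}\} \times \{\mathcal{L}_{\text{comp}}, \mathcal{L}_{\text{add}}\}$ is incomparable---by combining the explicit distinguishing pairs of measures already exhibited in this section with one structural observation to handle the cases in which that strategy is blocked by the preceding Fact.

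For (i), every upward edge admits a truth-preserving translation. The term inclusions $T_{\text{uncond}} \subseteq T_{\text{add}} \subseteq T_{\text{poly}}$ and $T_{\text{uncond}} \subseteq T_{\text{quad}} \subseteq T_{\text{poly}}$ give the corresponding $\Lambda(T)$-inclusions directly. For $\mathcal{L}_{\text{comp}} \hookrightarrow \mathcal{L}_{\text{cond}}$, translate $\mathbf{P}(\alpha) \succsim \mathbf{P}(\beta)$ as $\mathbf{P}(\alpha \mid \top) \succsim \mathbf{P}(\beta \mid \top)$, which collapses to the same numerical inequality by the semantic clause. For $\mathcal{L}_{\text{cond}} \hookrightarrow \mathcal{L}_{\text{quad}}$, unfold each conditional comparison to the quadratic inequality $\mathbf{P}(\alpha\wedge\beta)\cdot\mathbf{P}(\delta) \succsim \mathbf{P}(\gamma\wedge\delta)\cdot\mathbf{P}(\beta)$. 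For $\mathcal{L}_{\text{ind}} \hookrightarrow \mathcal{L}_{\text{confirm}}$, encode $\mathbf{P}(\alpha) \indep \mathbf{P}(\beta)$ as $\mathbf{P}(\alpha\mid\beta) \succsim \mathbf{P}(\alpha) \wedge \mathbf{P}(\alpha) \succsim \mathbf{P}(\alpha\mid\beta)$, which by the given clauses is exactly the independence identity; equalities are primitive in both languages. Finally, $\mathcal{L}_{\text{confirm}} \hookrightarrow \mathcal{L}_{\text{cond}}$ because each confirm-atom is already a conditional comparison.

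For (ii), strictness of every edge except $\mathcal{L}_{\text{add}} \subsetneq \mathcal{L}_{\text{poly}}$ is witnessed by a pair of measures already constructed in the section: $\mathbb{P}_1,\mathbb{P}_2$ agreeing on every atom of the smaller language but disagreeing on some atom of the larger one. For $\mathcal{L}_{\text{add}} \subsetneq \mathcal{L}_{\text{poly}}$, the Fact blocks that tactic, so I would instead invoke Corollary~\ref{cor:gen}: the $\mathcal{L}_{\text{poly}}$-sentence $\mathbf{P}(A\wedge B) \approx \mathbf{P}(\neg A \vee \neg B) \wedge \mathbf{P}(A\wedge B)\cdot\mathsf{1} \approx \mathbf{P}(B)\cdot\mathbf{P}(B)$ forces $\mathbf{P}(B) = \nicefrac{1}{\sqrt{2}}$, while any satisfiable $\mathcal{L}_{\text{add}}$-sentence admits a rational model, so no $\mathcal{L}_{\text{add}}$-sentence can share its model class.

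For (iii), the ``$\not\leq$'' direction \emph{into} $\mathcal{L}_{\text{ind}}$ or $\mathcal{L}_{\text{confirm}}$ is delivered by the $\nicefrac{1}{9}$-example: it exhibits two measures that are $\mathcal{L}_{\text{confirm}}$-equivalent---hence, by (i), also $\mathcal{L}_{\text{ind}}$-equivalent---but $\mathcal{L}_{\text{comp}}$-distinguishable; since $\mathcal{L}_{\text{comp}} \leq \mathcal{L}_{\text{add}}$, transitivity of $\leq$ gives that neither $\mathcal{L}_{\text{comp}}$ nor $\mathcal{L}_{\text{add}}$ embeds into $\mathcal{L}_{\text{ind}}$ or $\mathcal{L}_{\text{confirm}}$. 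The reverse direction against $\mathcal{L}_{\text{comp}}$ is similarly witnessed by the $\nicefrac{25}{36}$-example. The remaining case, and the main obstacle, is $\mathcal{L}_{\text{ind}} \not\leq \mathcal{L}_{\text{add}}$ (whence also $\mathcal{L}_{\text{confirm}} \not\leq \mathcal{L}_{\text{add}}$), where the Fact again rules out the pair-of-measures approach. My plan is to observe that any $\mathcal{L}_{\text{add}}$-sentence defines a semilinear subset of the worlds-probability simplex, being a Boolean combination of linear inequalities with integer coefficients over the atomic-world probabilities, whereas the model class of $\mathbf{P}(A)\indep\mathbf{P}(B)$ meets the line $\mathbb{P}(A)=\mathbb{P}(B)=t$ in the parabola $\mathbb{P}(A\wedge B) = t^2$, which is not semilinear. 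The fiddly step will be making the semilinearity observation fully rigorous---specifying the ambient simplex for the finite sub-vocabulary in play and checking that the shape of $\mathcal{L}_{\text{add}}$-definable sets is stable under the relevant slice---but this requires only standard facts about systems of linear inequalities and no new technology.
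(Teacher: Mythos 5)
Your proposal is correct and follows essentially the same route as the paper: reuse the concrete pairs of measures exhibited in the section to witness strictness of each edge, and handle the one edge where the two-measure tactic is blocked ($\mathcal{L}_{\text{add}}$ versus $\mathcal{L}_{\text{poly}}$) by combining the $\nicefrac{1}{\sqrt{2}}$ example with the rational-model property of $\mathcal{L}_{\text{add}}$ from Corollary~\ref{cor:gen}. The one place you go beyond the paper's own argument is the case $\mathcal{L}_{\text{ind}} \not\leq \mathcal{L}_{\text{add}}$ (and hence $\mathcal{L}_{\text{confirm}} \not\leq \mathcal{L}_{\text{add}}$): the paper's text only argues incomparability of $\mathcal{L}_{\text{ind}}$ and $\mathcal{L}_{\text{confirm}}$ against $\mathcal{L}_{\text{comp}}$, even though the theorem asserts it against $\mathcal{L}_{\text{add}}$ as well, and you are right that the Fact rules out the two-measure strategy there. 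Your semilinearity argument is sound (a Boolean combination of integer-coefficient linear inequalities over the state-description simplex cannot carve out the independence quadric, as one sees by slicing along $\mathbb{P}(A)=\mathbb{P}(B)$), though you could get the same conclusion more cheaply by the device already used for $\mathcal{L}_{\text{add}}$ versus $\mathcal{L}_{\text{poly}}$: the $\mathcal{L}_{\text{ind}}$-formula $\mathbf{P}(A\wedge B) = \mathbf{P}(\neg(A\wedge B)) \wedge \mathbf{P}(A) = \mathbf{P}(B) \wedge \mathbf{P}(A)\indep\mathbf{P}(B)$ forces $\mathbb{P}(A)=\nicefrac{1}{\sqrt 2}$, so Corollary~\ref{cor:gen} again applies and no new machinery about definable sets is needed. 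Either way the gap is filled, and the rest of your verification (the truth-preserving translations for the upward edges and the transitivity bookkeeping for the incomparability claims) matches what the paper leaves implicit.
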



\subsection{Complexity}

In this subsection, we introduce the ideas from complexity theory needed to state some of the paper's results. We denote by $\SAT_{\mathcal{L}}$ the satisfiability problem for $\mathcal{L}$.

\begin{definition}[Polynomial-time, deterministic reductions]\label{reductions}
A polynomial-time, deterministic reduction from one decision problem $A$ to another decision problem $B$ is a deterministic Turing machine $M$, such that $a \in A$ if and only if $M(a) \in B$, with $M(a)$ computing in a number of steps polynomial in the length of the binary input $a$. We write $A \leq B$ when there exists such a reduction..
\end{definition}

In particular, when there is a polynomial-time map from $\mathcal{L}_1$ to $\mathcal{L}_2$ which preserves and reflects satisfiability, we write $\SAT_{\mathcal{L}_1} \leq \SAT_{\mathcal{L}_2}$.

\begin{definition}[$\mathsf{NP}$-reductions]
An $\NP$-reduction from $A$ to $B$ is a nondeterministic Turing machine $M$, such that $a \in A$ if and only if at least one of the outputs $M(a)$ is in $B$, with each output $M(a)$ computing in a number of steps polynomial in the length of the binary input $a$.\footnote{Equivalently, one can think of an $\NP$-reduction as a deterministic reduction $M^\prime$, provided with a polynomial-sized ``certificate'' or ``guess,'' which specifies which (if any) of the non-deterministic paths of $M$ will lead to an output $M(a)$ in $B$: then, $M^\prime$ ``verifies'' this path, correctly producing an output $M^\prime(a) \in B$ if and only if $a \in A$.}
\end{definition}

\begin{definition}[Complexity classes]
When each member of a collection $\mathcal{C}$ of decision problems can be reduced via some deterministic, polynomial-time map to a particular decision problem $A$, one says that the problem is \textit{$\mathcal{C}$-hard}; if in addition, $A \in \mathcal{C}$, then $A$ is $\mathcal{C}$-complete. The class $\mathcal{C}$ of decision problems is called a \textit{complexity class}.
\end{definition}

\begin{definition}[Closure under $\NP$-reductions]
    A complexity class $\mathcal{C}$ is closed under $\mathsf{NP}$-reductions if whenever there is an $\NP$-reduction from $A$ to $B \in \mathcal{C}$, then in addition $A \in \mathcal{C}$.
\end{definition}

 We are concerned here with two complexity classes which are closed under $\NP$-reductions \citep{ten2013data}: $\NP$ and $\ETR$. We discuss each in turn.

\paragraph{The Class $\NP$.} The class $\NP$ contains any problem that can be solved by a non-deterministic Turing machine in a number of steps that grows polynomially in the input size. Hundreds of problems are known to be $\NP$-complete, among them Boolean satisfiability and the decision problems associated with several natural graph properties, for example possession of a clique of a given size or possession of a Hamiltonian path. See \cite{ruiz2011survey} for a survey of such problems and their relations.

\paragraph{The Class $\ETR$.} The Existential Theory of the Reals (ETR) contains all true sentences of the form
\[
\text{there exist } x_1,...,x_n \in \mathbb{R} \text{ satisfying }\mathcal{S},
\]
where $\mathcal{S}$ is a system of equalities and inequalities of arbitrary polynomials in the variables $x_1,...,x_n$. For example, one can state in ETR the existence of the golden ratio, which is the only root of the polynomial $f(x) =x^2 - x -1$ greater than one, by `there exists $x >1$ satisfying $f(x) = 0$.' The decision problem of saying whether a given formula $\varphi\in$ ETR is complete (by definition) for the complexity class $\ETR$.

The class $\ETR$ is the real analogue of $\NP$, in two senses. Firstly, the satisfiability problem that is complete for $\ETR$ features real-valued variables, while the satisfiability problems that are complete for $\NP$ typically feature integer- or Boolean-valued variables. Secondly, and more strikingly, \cite{erickson2020smoothing} showed that while $\NP$ is the class of decision problems with answers that can be verified in polynomial time by machines with access to unlimited integer-valued memory, $\ETR$ is the class of decision problems with answers that can be verified in polynomial time by machines with access to unlimited \textit{real-valued} memory.

As with $\NP$, a myriad of problems are known to be $\ETR$-complete. We include some examples that illustrate the diversity of such problems:
\begin{itemize}
    \item
    In graph theory, there is the $\ETR$-complete problem of deciding whether a given graph can be realized by a straight line drawing \citep{schaefer2013realizability}. 
    \item
    In game theory, there is the $\ETR$-complete problem of deciding whether an (at least) three-player game has a Nash equilibrium with no probability exceeding a fixed threshold \citep{bilo2017existential}.
    \item 
    In geometry, there is the $\ETR$-complete `art gallery' problem of finding the smallest number of points from which all points of a given polygon are visible \citep{abrahamsen2018art}.
     \item
     In machine learning, there is the $\ETR$-complete problem of finding weights for a neural network and some training data such that the total error is below a given threshold \citep{abrahamsen2021training}.
\end{itemize}
For discussions of further $\ETR$-complete problems, see  \cite{schaefer2009complexity} and \cite{cardinal2015computational}.\\


 The inclusions $\NP \subseteq \ETR \subseteq \mathsf{PSPACE}$ are known, where $\mathsf{PSPACE}$ is the set of decision problems solvable using polynomial space; it is an open problem whether either inclusion is strict.

\subsection{Notation}\label{section:notation}

We denote probability measures by $\mathbb{P}$ and formal logical symbols for such measures by $\mathbf{P}$. We use $A, B, C$ to denote propositional atoms; Greek minuscule $\alpha, \beta, \gamma, \delta, \epsilon, \zeta$ to denote propositional formulas over such atoms; sans-serif $\mathsf{a}, \mathsf{b}, \mathsf{c}$, etc.\ to denote terms (elements of the various $T_*$) in probabilities of such formulas; and $\varphi, \psi, \chi$ to denote formulas comparing such terms (viz.\ formulas of the $\mathcal{L}_*$). 
At various points in the paper we rely on the following definition:

\begin{definition}
For a set $\mathcal{A} \subset \mathsf{Prop}$ of proposition letters, let \[\Delta_{\mathcal{A}} = \big\{ \bigwedge_{A \in \mathcal{A}} \ell_A : \ell_A \in \{ A, \lnot A \} \text{ for each } A \big\}\] be the set of formulas providing complete state descriptions of $\mathcal{A}$. We simply write $\Delta$ where the set $\mathcal{A}$ is clear from context.
\end{definition}

Often, we will take $\mathcal{A}$ to be the set of proposition letters appearing in a formula $\varphi$, in which case we write $\Delta_\varphi$ instead of $\Delta_{\mathcal{A}}$. For example, if $\varphi$ is the formula $ \mathbf{P}(A \lor B) > \mathbf{P}(A)$, then $\Delta_\varphi = \{A\land B, \neg A \land B, A \land \neg B, \neg A \land \neg B\}$.

\section{Additive Systems}\label{section: Additive Systems}

\subsection{Positive Linear Inequalities}\label{section: positive linear inequalities}

Our first task is to provide an axiomatization of the language $\mathcal{L}_{\text{add}}$. Aside from the basic principles of $\mathsf{AX}_{\text{base}}$ (Fig. \ref{fig-base}), we have the following additivity axiom: \begin{eqnarray*}
\mathsf{Add}. & & \mathbf{P}(\alpha) \approx \mathbf{P}(\alpha \wedge \beta) + \mathbf{P}(\alpha \wedge \neg \beta)
\end{eqnarray*} We also have core axioms for dealing with addition. The system $\mathsf{AX}_{\text{add}}$ is shown in Figure \ref{fig-add}.\footnote{A version of the axiom $\mathsf{2Canc}$ appears in the textbook by  \cite{Krantz1971}, under the name \emph{double cancellation} (p. 250).}
\begin{figure}[t]
\begin{mdframed} \[\underline{\textbf{The Axioms of }\mathsf{AX}_{\textnormal{add}}}\] \vspace{-.3in} 
\begin{eqnarray*}
\mathsf{Add}. & & \mathbf{P}(\alpha) \approx \mathbf{P}(\alpha \wedge \beta) + \mathbf{P}(\alpha \wedge \neg \beta) \\ 
\mathsf{Assoc}. & & \mathsf{a} + (\mathsf{b}+\mathsf{c}) \approx (\mathsf{a}+\mathsf{b})+\mathsf{c} \\
\mathsf{Comm}. & & \mathsf{a}+\mathsf{b} \approx \mathsf{b}+\mathsf{a} \\ 
\mathsf{Zero}. & & \mathsf{a} + \mathsf{0} \approx \mathsf{a} \\
\mathsf{2Canc}. & & (\mathsf{a}+\mathsf{e}\succsim \mathsf{c}+\mathsf{f} \wedge \mathsf{b}+\mathsf{f}\succsim \mathsf{d}+\mathsf{e}) \rightarrow \mathsf{a}+\mathsf{b} \succsim \mathsf{c}+\mathsf{d} \\
\mathsf{Contr}. & & (\mathsf{a}+\mathsf{b}\succsim\mathsf{c}+\mathsf{d} \wedge \mathsf{d} \succsim \mathsf{b}) \rightarrow \mathsf{a} \succsim \mathsf{c}
\end{eqnarray*}
\end{mdframed}
\caption{$\mathsf{AX}_{\textnormal{add}}$ is comprised of these axioms, in addition to the rules and axioms of $\mathsf{AX}_{\textnormal{base}}$.\label{fig-add}}
\end{figure}
A number of further principles are easily derivable in $\mathsf{AX}_{\text{add}}$, which we record in the following lemma, with suggestive names:
\begin{lemma}
The following all follow in $\mathsf{AX}_{\text{add}}$: \begin{eqnarray*}
\mathsf{NonNull}. & & \mathsf{a} \succsim \mathsf{0} \\
\mathsf{Refl}. & & \mathsf{a} \succsim \mathsf{a} \\
\mathsf{Mono}. & & \mathsf{a}+\mathsf{b} \succsim \mathsf{a} \\
\mathsf{1Canc}. & & \mathsf{a}+\mathsf{c}\succsim\mathsf{b}+\mathsf{c} \leftrightarrow \mathsf{a} \succsim \mathsf{b} \\
\mathsf{Dupl}. & & \mathsf{a}+\mathsf{a} \succsim \mathsf{b}+\mathsf{b} \leftrightarrow \mathsf{a} \succsim \mathsf{b} \\ 
\mathsf{Comb}. & & (\mathsf{a}\succsim\mathsf{b} \wedge \mathsf{c}\succsim\mathsf{d}) \rightarrow \mathsf{a}+\mathsf{c}\succsim\mathsf{b}+\mathsf{d} \\
\mathsf{Sub1}. & & \mathsf{e}+\mathsf{c}\approx\mathsf{a} \rightarrow (\mathsf{a}+\mathsf{d}\succsim\mathsf{b}+\mathsf{c} \leftrightarrow \mathsf{e}+\mathsf{d}\succsim \mathsf{b}) \\
\mathsf{Sub2}. & & \mathsf{e}+\mathsf{c}\approx\mathsf{a} \rightarrow (\mathsf{b}+\mathsf{c}\succsim\mathsf{a}+\mathsf{d} \leftrightarrow \mathsf{b}\succsim\mathsf{e}+ \mathsf{d}) \\
\mathsf{Elim}. & & (\mathsf{e}+\mathsf{a}\succ \mathsf{b} \wedge \mathsf{c}\succ \mathsf{e}+\mathsf{d}) \rightarrow \mathsf{a}+\mathsf{c} \succ \mathsf{b}+\mathsf{d} \\
\mathsf{Repl}. & & \mathsf{a}\approx\mathsf{b} \rightarrow (\varphi \leftrightarrow \varphi^{\mathsf{a}}_{\mathsf{b}})
\end{eqnarray*} where $\varphi^{\mathsf{a}}_{\mathsf{b}}$ is the result of replacing some instances of $\mathsf{a}$ by $\mathsf{b}$.  \label{lemma:add}
\end{lemma}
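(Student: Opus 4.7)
The plan is to derive these principles in a careful dependency order, building from structural consequences of $\mathsf{AX}_{\textnormal{base}}$ up through cancellation laws to substitution. First, $\mathsf{Refl}$ is immediate: instantiating comparability in $\mathsf{Lin}$ with $\mathsf{a}=\mathsf{b}$ yields $\mathsf{a}\succsim\mathsf{a}$. The keystone is $\mathsf{1Canc}$. For the forward direction $\mathsf{a}+\mathsf{c}\succsim\mathsf{b}+\mathsf{c}\to\mathsf{a}\succsim\mathsf{b}$, I would apply $\mathsf{Contr}$ with both of its $\mathsf{b}$- and $\mathsf{d}$-slots set to $\mathsf{c}$, discharging the side premise $\mathsf{c}\succsim\mathsf{c}$ by $\mathsf{Refl}$. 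For the reverse direction, I would apply $\mathsf{2Canc}$ with the $\mathsf{e}$- and $\mathsf{f}$-slots both set to $\mathsf{0}$, turning $\mathsf{a}\succsim\mathsf{b}$ into $\mathsf{a}+\mathsf{0}\succsim\mathsf{b}+\mathsf{0}$ via $\mathsf{Zero}$ and transitivity, and using $\mathsf{Refl}$ for $\mathsf{c}+\mathsf{0}\succsim\mathsf{c}+\mathsf{0}$.

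Once $\mathsf{1Canc}$ is available, $\mathsf{Comb}$ follows by chaining $\mathsf{a}+\mathsf{c} \succsim \mathsf{b}+\mathsf{c} \succsim \mathsf{b}+\mathsf{d}$ (each step by $\mathsf{1Canc}$, with $\mathsf{Comm}$ interpolated before the second). Then $\mathsf{NonNull}$ is a short induction on the structure of $\mathsf{a}$: the base case $\mathsf{a}=\mathbf{P}(\alpha)$ uses $\mathsf{Dist}$ together with the tautology $\bot\to\alpha$, and the sum case follows from $\mathsf{Comb}$ applied to the inductive hypotheses together with $\mathsf{Zero}$. With $\mathsf{NonNull}$ in hand, $\mathsf{Mono}$ is immediate via $\mathsf{1Canc}$ and $\mathsf{Zero}$. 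For $\mathsf{Dupl}$, the $\Leftarrow$ direction is a special case of $\mathsf{Comb}$; for $\Rightarrow$, I would case-split on comparability in $\mathsf{Lin}$---if $\mathsf{a}\succsim\mathsf{b}$ we are done, while if $\mathsf{b}\succsim\mathsf{a}$, then $\mathsf{Contr}$ applied to $\mathsf{a}+\mathsf{a}\succsim\mathsf{b}+\mathsf{b}$ together with $\mathsf{b}\succsim\mathsf{a}$ yields $\mathsf{a}\succsim\mathsf{b}$ (and hence $\mathsf{a}\approx\mathsf{b}$).

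For the substitution principles $\mathsf{Sub1}$ and $\mathsf{Sub2}$, I would rewrite $\mathsf{a}$ as $\mathsf{e}+\mathsf{c}$ inside the given inequality (justified by $\mathsf{Comb}$ applied to $\mathsf{a}\approx\mathsf{e}+\mathsf{c}$ and $\mathsf{d}\approx\mathsf{d}$), reshuffle via $\mathsf{Assoc}$ and $\mathsf{Comm}$, and cancel $\mathsf{c}$ via $\mathsf{1Canc}$; the converses reverse the same moves. $\mathsf{Elim}$ needs more care because its conclusion is strict. The weak version $\mathsf{a}+\mathsf{c}\succsim\mathsf{b}+\mathsf{d}$ comes from $\mathsf{Comb}$ applied to the non-strict halves of the premises, followed by cancellation of $\mathsf{e}$ via $\mathsf{1Canc}$. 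For the strict half, assume for contradiction that $\mathsf{b}+\mathsf{d}\succsim\mathsf{a}+\mathsf{c}$; from $\mathsf{c}\succsim\mathsf{e}+\mathsf{d}$ and $\mathsf{1Canc}$ we obtain $\mathsf{a}+\mathsf{c}\succsim\mathsf{a}+\mathsf{e}+\mathsf{d}$, and then transitivity together with another application of $\mathsf{1Canc}$ yields $\mathsf{b}\succsim\mathsf{a}+\mathsf{e}$, contradicting $\mathsf{e}+\mathsf{a}\succ\mathsf{b}$.

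Finally, $\mathsf{Repl}$ is proved by structural induction---first on terms, then on formulas. For terms, the atomic case uses $\mathsf{a}\approx\mathsf{b}$ directly, and the sum case $\mathsf{t}_1+\mathsf{t}_2$ uses $\mathsf{Comb}$ (in both directions, since $\approx$ abbreviates both $\succsim$) applied to the inductive hypothesis. For formulas, the propositional cases over $\neg$ and $\wedge$ are routine, and the atomic case $\mathsf{s}\succsim\mathsf{u}$ reduces to the term-level claim via transitivity. I expect the main obstacle to be the strict half of $\mathsf{Elim}$, which does not fall directly out of any single axiom and requires the contradiction argument above; the remaining difficulty is bookkeeping---sequencing $\mathsf{1Canc}$, $\mathsf{Comb}$, and $\mathsf{NonNull}$ in the right order to avoid circularity.
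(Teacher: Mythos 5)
Your derivations are correct, and the dependency order ($\mathsf{Refl}$ from comparability, then $\mathsf{1Canc}$ from $\mathsf{Contr}$ and $\mathsf{2Canc}$ with $\mathsf{e}=\mathsf{f}=\mathsf{0}$, then $\mathsf{Comb}$, $\mathsf{NonNull}$, and the rest) avoids circularity; I checked the axiom instantiations, including the contradiction argument for the strict half of $\mathsf{Elim}$, and they all go through. The paper itself omits the proof of this lemma, asserting only that the principles are ``easily derivable,'' so your argument supplies exactly the kind of routine verification the authors had in mind.
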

The main result of this subsection is a completeness proof for $\mathsf{AX}_{\text{add}}$. Unlike existing completeness arguments for additive probability logics (such as that in \citealt{fagin1990logic}), the proof here proceeds solely on the basis of a variable elimination argument.
\begin{theorem} $\mathsf{AX}_{\textnormal{add}}$ is sound and complete. \label{thm:add-complete}
\end{theorem}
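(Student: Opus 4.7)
The plan is first to dispatch soundness, then to prove completeness via a syntactic Fourier--Motzkin procedure matched step-for-step by the axioms. Soundness is a routine check: $\mathsf{Add}$ is finite additivity over the disjoint partition $\{\alpha\wedge\beta,\alpha\wedge\neg\beta\}$; $\mathsf{Assoc}$, $\mathsf{Comm}$, $\mathsf{Zero}$ are just real arithmetic; and $\mathsf{2Canc}$ and $\mathsf{Contr}$ are obtained by adding/subtracting real inequalities and canceling common summands. For completeness, I would argue contrapositively: if $\varphi$ is consistent with $\mathsf{AX}_{\textnormal{add}}$, build a satisfying model. By Boolean reasoning one may assume $\varphi$ is a conjunction of literals; by $\mathsf{Lin}$, each literal $\neg(\mathsf{a}\succsim\mathsf{b})$ becomes $\mathsf{b}\succ\mathsf{a}$, so $\varphi$ is a conjunction of weak and strict inequalities between sums of probability terms.

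Next, I would put these inequalities in a uniform normal form indexed by state descriptions. Using $\mathsf{Dist}$ together with repeated applications of $\mathsf{Add}$, $\mathsf{Assoc}$, $\mathsf{Comm}$, $\mathsf{Zero}$, and $\mathsf{Repl}$, every occurrence of $\mathbf{P}(\alpha)$ can be rewritten, provably in $\mathsf{AX}_{\textnormal{add}}$, as $\sum\{\mathbf{P}(\delta):\delta\in\Delta_\varphi,\,\delta\models\alpha\}$. Letting $x_i$ denote $\mathbf{P}(\delta_i)$ for an enumeration $\delta_1,\dots,\delta_N$ of $\Delta_\varphi$, every term of $\varphi$ becomes a formal sum $\sum_i k_i\, x_i$ with non-negative integer multiplicities (with $0\cdot x_i$ handled by $\mathbf{P}(\bot)$ via $\mathsf{Dist}$); one also extracts the side condition $\sum_i x_i \approx \mathsf{1}$, which is derivable from $\mathsf{Add}$ and $\mathsf{NonDeg}$. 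The satisfiability of $\varphi$ thus reduces to the solvability of a finite system of weak and strict linear inequalities with non-negative integer coefficients, over non-negative reals, subject to the affine constraint $\sum_i x_i = 1$.

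The heart of the argument is then a syntactic Fourier--Motzkin elimination, in which each real-arithmetic step is shadowed by an application of the axioms. To eliminate $x_i$ from the system: cancel common occurrences of $x_i$ within each inequality via $\mathsf{1Canc}$ (derivable in Lemma~\ref{lemma:add}) so that $x_i$ appears only on one side; then pair each inequality with $x_i$ on the left with each inequality having $x_i$ on the right and apply $\mathsf{2Canc}$ (for the weak--weak case), $\mathsf{Elim}$ (for the strict--weak case), and $\mathsf{Comb}$ (for combining) to derive a new inequality in which $x_i$ has been eliminated and the remaining multiplicities add correctly. $\mathsf{Contr}$ is used when the coefficient of $x_i$ differs between the two sides after cancellation, reducing to the balanced case. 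Iterating, one exhausts the $x_i$ and is left with variable-free inequalities among integer multiples of $\mathsf{1}$ (together with $\mathsf{0}$), each of which is either derivable or, if false, gives $\mathsf{0}\succ\mathsf{0}$, contradicting $\mathsf{NonDeg}$.

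The main obstacle is the bookkeeping, specifically the interleaving of strict and weak inequalities under elimination (so that the right axiom, $\mathsf{2Canc}$ versus $\mathsf{Elim}$, applies at each step) and the treatment of the affine side constraint $\sum_i x_i\approx \mathsf{1}$, which must be propagated through elimination using $\mathsf{Sub1}$ and $\mathsf{Sub2}$. Once the procedure is verified, completeness follows by contrapositive: a consistent $\varphi$ yields a Fourier--Motzkin-reduced system that admits a non-negative rational solution summing to $1$, and this assignment together with the state descriptions determines a probability space satisfying $\varphi$. As a bonus, the model is rational, which underwrites Corollary~\ref{cor:gen} cited earlier.
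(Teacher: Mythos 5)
Your proposal is correct and follows essentially the same route as the paper: a syntactic Fourier--Motzkin elimination in which each step is mirrored by the derived principles of Lemma~\ref{lemma:add} ($\mathsf{1Canc}$, $\mathsf{2Canc}$, $\mathsf{Elim}$, $\mathsf{Sub1}$/$\mathsf{Sub2}$, $\mathsf{Contr}$), after reduction to state descriptions. The only cosmetic differences are that the paper works directly with refuting $\neg\varphi$ rather than building a model for a consistent $\varphi$, carries the side condition $\sum_\delta \mathbf{P}(\delta)\succ\mathsf{0}$ (normalizing a $\mathbb{Q}^+$ solution afterwards) rather than $\sum_\delta \mathbf{P}(\delta)\approx\mathsf{1}$, and sidesteps the weak/strict interleaving you flag by first splitting each weak inequality into an equality or a strict inequality via $\mathsf{Lin}$, using $\mathsf{Dupl}$ to equalize the coefficient of the eliminated variable across conjuncts.
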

\begin{proof} Soundness is routine. For completeness, we show that the system is strong enough to carry out a variation on the Fourier-Motzkin elimination method for solving linear inequalities. 

By $\mathsf{Dist}$ and $\mathsf{Add}$, we can assume that in every probability term $\mathbf{P}(\delta)$, the formula $\delta$ is a (canonical) complete state description over finitely many propositional atoms, or else a contradiction. Thus we can assume that for every two probability terms $\mathbf{P}(\delta)$ and $\mathbf{P}(\gamma)$ appearing in the formula, $\delta$ and $\gamma$ are logically inconsistent. Their values are therefore constrained only by the restrictions explicitly implied by the formula, and by the fact that their sum must be greater than $0$. By $\mathsf{NonDeg}$ we can assume that our formula is conjoined with the (derivable) statement $\sum_{\delta}\mathbf{P}(\delta) \succ \mathsf{0}$, since the two will be interderivable. In other words, our formula will be satisfiable iff the corresponding linear system---replacing each $\mathbf{P}(\delta)$ with a distinct variable $x$ and $\mathbf{P}(\bot)$ with $0$---has a solution.  
Note that it suffices to consider (un)satisfiability over solutions in $\mathbb{Q}^+$, the non-negative rationals, as we can always normalize to obtain a solution in $[0,1]$ corresponding to a probability measure. 

Our strategy will be as follows. Suppose $\varphi$ is valid. We want to show that we can transform $\neg \varphi$ into an equisatisfiable formula $\psi$, such that $\vdash_{\mathsf{AX}_{\textnormal{add}}} \neg \varphi \rightarrow \psi$. 
Because the sentence $\psi$ will have a particularly simple form, we will be able to tell easily that its negation is derivable. It will follow at once (by Boolean reasoning) that  $\vdash_{\mathsf{AX}_{\textnormal{add}}} \varphi$. 

Assume that $\neg \varphi$ is in disjunctive normal form, and consider any disjunct, which we can assume is a conjunction of equality statements ($\approx$) and strict inequality statements ($\succ$). Pick any `variable' $x = \mathbf{P}(\delta)$. We want to show how $x$ can be eliminated from each conjunct in a way that leads to an equisatisfiable formula that is also derivable from the previous formula.  By principles $\mathsf{1Canc}$ and $\mathsf{Dupl}$ (together with $\mathsf{Lin}$, $\mathsf{Assoc}$, and $\mathsf{Comm}$) we can assume without loss a fixed $k>0$, such that each conjunct containing $x$ has one of the following forms:
\begin{multicols}{3}
\begin{enumerate}[label=(\roman*)]
\item \label{f1} $kx + \mathsf{a} \approx \mathsf{b}$
\item \label{f2} $kx +  \mathsf{a} \succ \mathsf{b}$
\item \label{f3} $\mathsf{b} \succ kx+ \mathsf{a}$
\end{enumerate} \end{multicols} \noindent where $kx$ is an abbreviation for the $k$-fold sum of $x$, and where $x$ does not appear anywhere in terms $\mathsf{a}$ or $\mathsf{b}$. If there is no $\mathsf{a}$, simply let it be $\mathsf{0}$, admissible by $\mathsf{Zero}$. To show that we can eliminate $x$ altogether, consider the following cases:
\begin{enumerate}[label=(\alph*)]
  \item \emph{There is at least one conjunct of type \ref{f1}.} In this case, principles $\mathsf{Sub1}$ and $\mathsf{Sub2}$ allow eliminating $x$ from all but one conjunct of type \ref{f1}, as well as all conjuncts of types \ref{f2} and \ref{f3}. It remains only to show that $x$ can be eliminated from the last equality $kx+\mathsf{a}\approx\mathsf{b}$. Since $x$ appears nowhere else in the conjunct, the whole formula will be equisatisfiable with the result of replacing  $kx+\mathsf{a}\approx\mathsf{b}$ with $\mathsf{b}\succsim\mathsf{a}$. Moreover, the latter is derivable from the former by transitivity of $\succsim$ and using $\mathsf{Comm}$ and  $\mathsf{Mono}$.
  \item \emph{There are conjuncts of both types \ref{f2} and \ref{f3}.} This case is handled by principle $\mathsf{Elim}$. For each pair $kx +  \mathsf{a} \succ \mathsf{b}$ and $\mathsf{c} \succ kx+ \mathsf{d}$, we include a new conjunct $\mathsf{a}+\mathsf{c} \succ \mathsf{b}+\mathsf{d}$, which does not involve $x$. The resulting formula will be equisatisfiable. 
  \item \emph{There are only conjuncts of type \ref{f2}.} Such a formula is always satisfiable (over the positive rationals), so we can simply replace each of them with any tautology. 
  \item \emph{There are only conjuncts of type \ref{f3}.} In this case the equisatisfiable transformation replaces each instance $\mathsf{b} \succ kx+ \mathsf{a}$ with $\mathsf{b} \succ \mathsf{a}$. The latter can be derived from the former by transitivity, $\mathsf{Comm}$, and $\mathsf{Mono}$. 
\end{enumerate} After the last variable has been eliminated by repeated application of the above rules, we will be left with a conjunction of (in)equalities in which every term is a sum of $\mathsf{0}$s. By $\mathsf{Zero}$, we can assume every conjunct is of the form $\mathsf{0} \succsim \mathsf{0}$ or $\neg \mathsf{0} \succsim \mathsf{0}$. Unsatisfiability implies that $\neg \mathsf{0}\succsim \mathsf{0}$ must be a conjunct. But $\mathsf{0}\succsim \mathsf{0}$ is provable by $\mathsf{Dist}$. Thus, any unsatisfiable formula is refutable in $\mathsf{AX}_{\textnormal{add}}$.
\end{proof}




\subsection{Comparative Probability}\label{section: comparative probability}

The pure comparative language $\mathcal{L}_{\text{comp}}$ is just like $\mathcal{L}_{\text{add}}$, but without explicit addition over probability terms. Thus, while we can take $\mathsf{AX}_{\text{base}}$ as a basis for axiomatization, none of the remaining axioms of $\mathsf{AX}_{\text{add}}$ are in the language, most saliently, the additivity axiom $\mathsf{Add}$.
\subsubsection{Quasi-Additivity}\label{subsection: quasi-additivity}
Early in the development of modern probability, \cite{Finetti1937} proposed an intuitive principle, subsequently called \emph{quasi-additivity} (see, e.g., \citealt{Krantz1971}):\footnote{It will be convenient to omit the explicit reference in $\mathcal{L}_{\text{comp}}$ to probability operators, simply writing $\alpha \succsim \beta$ in place of $\mathbf{P}(\alpha)\succsim \mathbf{P}(\beta)$. For $\mathcal{L}_{\text{add}}$ we do not omit it.} 
\begin{eqnarray*}
\mathsf{Quasi}. & & \alpha\succsim \beta \leftrightarrow (\alpha \wedge \neg \beta) \succsim (\beta \wedge \neg \alpha) 
\end{eqnarray*} Some authors also refer to $\mathsf{Quasi}$ as \emph{qualitative additivity}, and it has been argued that this principle constitutes the `hard core for the logic of uncertain reasoning' \cite{Gaifman2009}. 

It was famously shown in \cite{kraft1959intuitive} that $\mathsf{Quasi}$ is insufficient to guarantee a probabilistic representation, falling short of full additivity. However, there is an important sense in which this axiom does capture additivity. First, observe that $\mathsf{Quasi}$ is equivalent to the following variant, where $\delta$ is a Boolean expression incompatible with both $\gamma$ and $\theta$:
\begin{eqnarray*}
\mathsf{Quasi'}. & &   (\gamma \vee \delta) \succsim (\theta \vee \delta) \leftrightarrow \gamma\succsim \theta
\end{eqnarray*} To see this, note that $\mathsf{Quasi}$ emerges as the special case of $\mathsf{Quasi'}$ when $\gamma = (\alpha \wedge \neg \beta)$, $\theta=(\beta \wedge \neg \alpha)$, and $\delta = (\alpha \wedge \beta)$. In the other direction, letting $\alpha = (\gamma \vee \delta)$ and $\beta = (\theta \vee \delta)$, we obtain \begin{eqnarray*} (\gamma \vee \delta)\succsim (\theta \vee \delta) & \leftrightarrow & ((\gamma \vee \delta) \wedge \neg (\theta \vee \delta)) \succsim ((\theta \vee \delta) \wedge \neg( \gamma \vee \delta)) \\
& \leftrightarrow &  (\gamma \wedge \neg \theta) \succsim (\theta \wedge  \neg \gamma) \\
& \leftrightarrow &  \gamma \succsim \theta,
\end{eqnarray*} where the first and third equivalences are instances of $\mathsf{Quasi}$ and the second follows from $\mathsf{Dist}$.

The following observations identify a strong respect in which $\mathsf{Quasi}$ truly captures additivity. As long as we are dealing with incompatible Boolean expressions,  $\mathsf{Quasi}$ facilitates all of the same reasoning patterns as the core principles of $\mathsf{AX}_{\text{add}}$:
\begin{lemma} \label{lemma:combq} 
Suppose $\alpha$, $\beta$, $\gamma$, $\delta$, $\varepsilon$, and $\zeta$ are all pairwise unsatisfiable. Then following patterns are derivable from $\mathsf{AX}_{\text{base}}+\mathsf{Quasi}$:\begin{eqnarray*}
\mathsf{2CancQ}. & & \big((\mathsf{\alpha\vee\varepsilon})\succsim (\mathsf{\gamma \vee \zeta}) \wedge (\mathsf{\beta\vee\zeta})\succsim (\mathsf{\delta\vee\varepsilon})\big) \rightarrow (\mathsf{\alpha\vee\beta}) \succsim (\mathsf{\gamma\vee\delta}) \\
\mathsf{ContrQ}. & & \big((\alpha\vee\beta)\succsim(\gamma\vee\delta) \wedge \delta \succsim \beta\big) \rightarrow \alpha \succsim \gamma
\end{eqnarray*}
\end{lemma}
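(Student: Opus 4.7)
The plan is to exploit the equivalent form $\mathsf{Quasi'}$ already derived in the text: whenever $\delta'$ is Boolean-incompatible with both $\gamma'$ and $\theta'$, we have $(\gamma'\vee\delta')\succsim(\theta'\vee\delta')\leftrightarrow\gamma'\succsim\theta'$. This serves as a qualitative analogue of the cancellation principle $\mathsf{1Canc}$ from the additive setting: one can freely adjoin or remove a disjoint ``summand'' on both sides of a comparison. Under the blanket hypothesis that $\alpha,\beta,\gamma,\delta,\varepsilon,\zeta$ are pairwise unsatisfiable, the side condition of $\mathsf{Quasi'}$ is met for any of the six atoms (and for any disjunction of them), so both derivations reduce to bookkeeping with $\mathsf{Quasi'}$, $\mathsf{Dist}$, and transitivity (from $\mathsf{Lin}$).

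For $\mathsf{ContrQ}$, the idea is to boost $\delta\succsim\beta$ into a comparison whose shape matches the main hypothesis, then cancel. First I would apply the forward direction of $\mathsf{Quasi'}$ to $\delta\succsim\beta$ with common disjunct $\alpha$ (disjoint from both $\delta$ and $\beta$), obtaining $(\alpha\vee\delta)\succsim(\alpha\vee\beta)$. Chaining with the hypothesis $(\alpha\vee\beta)\succsim(\gamma\vee\delta)$ by transitivity yields $(\alpha\vee\delta)\succsim(\gamma\vee\delta)$. A final application of the backward direction of $\mathsf{Quasi'}$ cancels the common $\delta$ (disjoint from $\alpha$ and $\gamma$) to deliver $\alpha\succsim\gamma$.

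For $\mathsf{2CancQ}$, the strategy mirrors the arithmetic argument that $x+y\geq z+w$ follows from $x+e\geq z+f$ and $y+f\geq w+e$ by adding the inequalities and cancelling $e+f$. Forward $\mathsf{Quasi'}$ applied to the first hypothesis, adjoining $\beta$ on both sides, gives $(\alpha\vee\varepsilon\vee\beta)\succsim(\gamma\vee\zeta\vee\beta)$. Similarly, adjoining $\gamma$ to the second hypothesis gives $(\beta\vee\zeta\vee\gamma)\succsim(\delta\vee\varepsilon\vee\gamma)$. The right-hand side of the first and the left-hand side of the second are Boolean-equivalent (same disjuncts), so $\mathsf{Dist}$ makes them $\approx$, and transitivity produces $(\alpha\vee\varepsilon\vee\beta)\succsim(\delta\vee\varepsilon\vee\gamma)$. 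Finally, backward $\mathsf{Quasi'}$ cancels $\varepsilon$ (disjoint from $\alpha\vee\beta$ and from $\gamma\vee\delta$), concluding $(\alpha\vee\beta)\succsim(\gamma\vee\delta)$.

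The ``hard part'' is really only conceptual rather than technical: one must recognize that $\mathsf{Quasi'}$ already encapsulates the disjoint-events analogue of the $\mathsf{1Canc}$/$\mathsf{Comb}$ moves that, in $\mathsf{AX}_{\textnormal{add}}$, derive $\mathsf{2Canc}$ and $\mathsf{Contr}$ from $\mathsf{Add}$ and addition axioms. Once that is appreciated, both arguments are a matter of choosing which fresh disjunct to adjoin so that a transitivity chain can be closed, and then cancelling the leftover common disjunct.
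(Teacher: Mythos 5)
Your proposal is correct and follows essentially the same route as the paper: both derivations reduce to adjoining a disjoint disjunct via $\mathsf{Quasi'}$, closing a transitivity chain (with $\mathsf{Dist}$ handling reordering of disjuncts), and cancelling the leftover common disjunct with one more application of $\mathsf{Quasi'}$. The only difference is cosmetic: for $\mathsf{ContrQ}$ the paper adjoins $\gamma$ to $\delta\succsim\beta$ and cancels $\beta$ at the end, whereas you adjoin $\alpha$ and cancel $\delta$ — a symmetric and equally valid choice.
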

\begin{proof} Consider first $\mathsf{2CancQ}$. If $(\alpha\vee\varepsilon)\succsim (\gamma \vee \zeta)$, then by $\mathsf{Quasi'}$ we have $(\alpha \vee \varepsilon \vee \beta) \succsim (\gamma \vee \zeta \vee \beta)$. Meanwhile, from $(\beta\vee\zeta)\succsim (\delta\vee\varepsilon)$, again using $\mathsf{Quasi'}$ we have $(\beta \vee \zeta \vee \gamma) \succsim (\delta \vee \varepsilon \vee \gamma)$. By $\mathsf{Dist}$ and transitivity of $\succsim$ we obtain $(\alpha \vee \beta \vee \varepsilon) \succsim (\gamma \vee \delta \vee \varepsilon)$, and finally by one last application of $\mathsf{Quasi'}$ we derive $(\alpha \vee \beta) \succsim (\gamma \vee \delta)$.

For $\mathsf{ContrQ}$: if $\delta \succsim \beta$ then by $\mathsf{Quasi'}$, $(\gamma \vee \delta) \succsim (\gamma \vee \beta)$. From $(\alpha \vee \beta) \succsim (\gamma \vee \delta)$ and transitivity we derive $(\alpha \vee \beta) \succsim (\gamma \vee \beta)$. From one more application of $\mathsf{Quasi'}$ we conclude $\alpha \succsim \gamma$. 
\end{proof}
Indeed, the reader is invited to check that all of the patterns recorded in Lemma \ref{lemma:add} are derivable under the same restriction. For instance, $\mathsf{1Canc}$ simply becomes $\mathsf{Quasi'}$. For another example, here is a version of $\mathsf{Dupl}$:
\begin{lemma} \label{lemma:dupq} Suppose $\alpha$, $\beta$, $\gamma$, and $\delta$ are all pairwise unsatisfiable. Then every instance of the following is derivable from $\mathsf{AX}_{\text{base}}+\mathsf{Quasi}$:\begin{eqnarray*}
\mathsf{DuplQ}. & & (\alpha \approx \beta \wedge \gamma \approx \delta) \rightarrow \big((\alpha \vee \beta)\succsim (\gamma \vee \delta) \leftrightarrow \alpha \succsim \gamma\big). 
\end{eqnarray*}
\end{lemma}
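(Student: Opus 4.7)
My plan is to prove both directions using $\mathsf{Quasi'}$ (the reformulation derived just before Lemma~\ref{lemma:combq}) together with transitivity of $\succsim$ and Boolean reasoning via $\mathsf{Dist}$. The pairwise-unsatisfiability hypothesis is precisely what licenses repeated application of $\mathsf{Quasi'}$: its side condition (that the shared disjunct be incompatible with both sides) will always be met, since any one of $\alpha,\beta,\gamma,\delta$ can serve as the ``padding'' disjunct in a $\mathsf{Quasi'}$-step involving the other three. Throughout, I will silently use commutativity of $\vee$ (delivered by $\mathsf{Dist}$/$\mathsf{Bool}$) to align disjunctions.

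For the right-to-left direction, assume $\alpha \succsim \gamma$. Since $\beta$ is inconsistent with both $\alpha$ and $\gamma$, one application of $\mathsf{Quasi'}$ gives $(\alpha \vee \beta) \succsim (\gamma \vee \beta)$. From $\alpha \approx \beta$, $\gamma \approx \delta$ and transitivity of $\succsim$, I derive $\beta \succsim \delta$. Since $\gamma$ is inconsistent with both $\beta$ and $\delta$, a second application of $\mathsf{Quasi'}$ yields $(\beta \vee \gamma) \succsim (\delta \vee \gamma)$. Chaining via transitivity and Boolean commutativity of $\vee$ produces $(\alpha \vee \beta) \succsim (\gamma \vee \delta)$, as required.

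For the left-to-right direction, I argue by contraposition. Suppose $\neg(\alpha \succsim \gamma)$; comparability then gives $\gamma \succ \alpha$, and chaining through $\alpha \approx \beta$ and $\gamma \approx \delta$ yields $\delta \succ \beta$. The biconditional form of $\mathsf{Quasi'}$ transfers strict inequality (the $\succsim$-direction gives $(\gamma \vee \beta) \succsim (\alpha \vee \beta)$, while a further use of $\mathsf{Quasi'}$ in the reverse direction would contradict $\neg(\alpha \succsim \gamma)$), so $(\gamma \vee \beta) \succ (\alpha \vee \beta)$ and, analogously, $(\delta \vee \gamma) \succ (\beta \vee \gamma)$. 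Chaining these strict comparisons as in the previous paragraph gives $(\gamma \vee \delta) \succ (\alpha \vee \beta)$, which directly contradicts the hypothesis $(\alpha \vee \beta) \succsim (\gamma \vee \delta)$. The only real point of care, and the closest thing to an obstacle, is the descent from $\mathsf{Quasi'}$ for $\succsim$ to the corresponding biconditional for $\succ$; the rest is straightforward bookkeeping of incompatibilities.
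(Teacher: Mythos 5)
Your proof is correct and takes essentially the same route as the paper's: both directions come down to $\mathsf{Quasi'}$ together with transitivity and comparability of $\succsim$, with the left-to-right direction handled by a reductio. The only difference is organizational — you inline the applications of $\mathsf{Quasi'}$ that the paper packages as $\mathsf{2CancQ}$ and $\mathsf{ContrQ}$ in the preceding lemma, and in the forward direction you chain strict inequalities by contraposition where the paper applies $\mathsf{ContrQ}$ directly to the non-strict hypothesis; both variants go through.
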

\begin{proof} Suppose $\alpha \approx \beta$ and $\gamma \approx \delta$. Then if $\alpha \succsim \gamma$, by several applications of transitivity we know that $\beta \succsim \delta$. Applying $\mathsf{2CancQ}$ from Lemma \ref{lemma:combq} (letting $\varepsilon = \zeta = \top$) we obtain $(\alpha \vee \beta)\succsim (\gamma \vee \delta)$.

In the other direction, suppose $(\alpha \vee \beta) \succsim (\gamma \vee \delta)$ but that $\alpha \succsim \gamma$ fails. Then from our assumption above, using transitivity and Boolean reasoning, $\beta \succsim \delta$ must also fail. By comparability of $\succsim$ it follows that $\delta \succsim \beta$. But then $\mathsf{ContrQ}$ from Lemma \ref{lemma:combq} implies $\alpha \succsim \gamma$, a contradiction. 
\end{proof}
Limitations arise from the case where we cannot simulate addition with disjunction, when Boolean expressions are not incompatible. 

There are two approaches to circumvent the problem, each leading to a different axiomatization of the logic of comparative probability. The first and most canonical way, emerging from classical work in the theory of comparative probability orders, relies on introducing an infinitary axiom scheme which captures precisely the probabilistic representability of a comparative probability order on a Boolean algebra of events. The second one relies on introducing the \emph{Polarization rule}, a powerful proof rule proposed by \cite{burgess2010axiomatizing}. As we show in section \ref{Polarizationsection}, adding the quasi-additivity axiom and the polarisation rule to $\mathsf{AX}_{\textnormal{base}}$ gives an alternative axiomatization for $\mathcal{L}_{\text{comp}}$. 



\subsubsection{Finite Cancellation and probabilistic representability} \label{section:scott}


The most common approach to axiomatizing comparative probability crucially relies on a representation theorem for comparative probability orders, due to \cite{kraft1959intuitive} and \cite{scott1964measurement}. Faced with the inadequacy of de Finetti's quasi-additivity principle to guarantee probabilistic representation, these authors proposed an infinite list of axioms, often called the \emph{finite cancellation} axioms. Here we follow subsequent modal-logical formulations of the axioms due to \cite{Segerberg1971,Gardenfors1975}. Given two lists of $n$ Boolean formulas $\alpha_1,\dots,\alpha_n,\beta_1,\dots,\beta_n$, we can consider the set $\Delta$ of all state descriptions, treating these Boolean formulas as atoms. We call a state description $\delta \in \Delta$ \emph{balanced} if the same numbers of $\alpha_i$'s as $\beta_i$'s is (not) negated in $\delta$. Let $\mathcal{B} \subset \Delta$ be the set of all balanced state descriptions. Now define the following abbreviation: \begin{eqnarray}\label{Balancedsequences}
(\alpha_1,\dots,\alpha_n)\equiv_0(\beta_1,\dots,\beta_n) & \overset{\textnormal{def}}{=} & \big(\bigvee_{\delta \in \mathcal{B}}\delta\big)\approx \top.
\end{eqnarray}
For all $n$, and all pairs of sequences of $n$ formulas, we have an instance of $\mathsf{FinCan}_n$: 
\begin{eqnarray*}
\mathsf{FinCan}_n. & & \big( (\alpha_1,\dots,\alpha_n)\equiv_0(\beta_1,\dots,\beta_n) \wedge \alpha_1\succsim\beta_1 \wedge \dots \wedge \alpha_{n-1}\succsim\beta_{n-1}   \big) \rightarrow \beta_n \succsim \alpha_n.
\end{eqnarray*}

Semantically, it is easy to see that the balancedness condition $(\alpha_1,\dots,\alpha_n)\equiv_0(\beta_1,\dots,\beta_n)$ amounts to the property that 
$$\sum_{i\leq n} \mathbbm{1}_{ \llbracket \alpha_{i}\rrbracket} = \sum_{i\leq n} \mathbbm{1}_{\llbracket \beta_{i}\rrbracket},$$
where $\mathbbm{1}_{\llbracket \alpha \rrbracket}$ is the indicator function of the set $\llbracket \alpha \rrbracket $. Accordingly, given a sample space $\Omega$ and a set algebra $\mathcal{F}$, we say that two sequences of events $A_1,..., A_{n}$ and $B_1,..., B_n \in \mathcal{F}$ are \emph{balanced} if $\sum_{i\leq n} \mathbbm{1}_{A_{i}} = \sum_{i\leq n} \mathbbm{1}_{B_{i}}$. A semantic formulation of the finite cancellation axioms is then the following:
\begin{eqnarray*}
\mathsf{FinCan}_n. & & \text{If $(A_{i})_{i\leq n}$ and $(B_{i})_{i\leq n}$ are balanced and $\forall i< n$, $A_{i}\succeq B_{i}$, then $B_{n}\succeq A_{n}$}.
\end{eqnarray*}
The two sequences being balanced means that every element of the sample space belongs to exactly as many $A_i$'s as $B_i$'s. Under this description, the soundness of $\mathsf{FinCan}_n$ is straightforward: balancedness ensures that the sums $\sum_{i} \prob(A_i)$ and $\sum_{i} \prob(B_i)$ are equal, since they are computed by taking the exact same sums of terms $\prob(\omega)$ for $\omega\in\Omega$: this is inconsistent with having $ \prob(A_i)\geq \prob(B_i)$ for all $i$ \emph{with some of these inequalities strict}. (Alternatively, to show soundness we can prove that $\mathsf{FinCan}_n$ is derivable in $\mathsf{AX}_{\text{add}}$: see Appendix.) 

The significance of the finite cancellation scheme stems from the following theorem. Say that a relation $\succeq$ on a Boolean algebra $\mathcal{F}$ is  \emph{probabilistically representable} if there exists a probability measure $\mathbb{P}$ on $\mathcal{F}$ such that, for all $A,B\in\mathcal{F}$, 
$$
A\succeq B \text{ if and only if } \mathbb{P}(A)\geq \mathbb{P}(B).
$$
We have: 
\begin{theorem}[\citealt{kraft1959intuitive,scott1964measurement}]
Let $(\Omega, \mathcal{F})$ a finite set algebra and $\succeq$ a binary relation on $\mathcal{F}$. There is a probability measure $\mathbb{P}$ on $(\Omega, \mathcal{F})$ representing $\succeq$ if and only if the following hold for all $A$, $B \in\mathcal{F}$: \label{SCOTTTHM}
\begin{eqnarray*}
\mathsf{Tot}. & &  \succeq \text{ is a reflexive total order;}\\
\mathsf{NonDeg}. & &  \varnothing\not\succeq\Omega;\\
\mathsf{NonTriv}. & & A\succeq \varnothing;\\
\mathsf{FinCan}_n. & & \text{If $(A_{i})_{i\leqslant n}$ and $(B_{i})_{i\leqslant n}$ are balanced and $\forall i< n$, $A_{i}\succeq B_{i}$, then $B_{n}\succeq A_{n}$}.
\end{eqnarray*}
\end{theorem}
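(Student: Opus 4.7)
The necessity direction is routine: if $\mathbb{P}$ represents $\succeq$, then $\mathsf{Tot}$, $\mathsf{NonDeg}$, and $\mathsf{NonTriv}$ are immediate from properties of $[0,1]$ and the monotonicity of $\mathbb{P}$. For $\mathsf{FinCan}_n$, balanced sequences $(A_i)_{i\leq n}$ and $(B_i)_{i\leq n}$ satisfy $\sum_i \mathbf{1}_{A_i} = \sum_i \mathbf{1}_{B_i}$ pointwise on $\Omega$, so $\sum_i \mathbb{P}(A_i) = \sum_i \mathbb{P}(B_i)$ by linearity of integration. Combined with $\mathbb{P}(A_i) \geq \mathbb{P}(B_i)$ for $i<n$, this forces $\mathbb{P}(B_n) \geq \mathbb{P}(A_n)$, i.e.\ $B_n \succeq A_n$.

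For sufficiency, fix an enumeration $\omega_1,\ldots,\omega_k$ of the atoms of the finite algebra $\mathcal{F}$ and identify each $C \in \mathcal{F}$ with its indicator $\mathbf{1}_C \in \mathbb{R}^k$. A probability measure corresponds to a vector $p \in \mathbb{R}^k_{\geq 0}$ with $\sum_j p_j = 1$; we seek such $p$ satisfying $\langle p, \mathbf{1}_C - \mathbf{1}_D\rangle \geq 0$ for every $C \succeq D$ and $\langle p, \mathbf{1}_A - \mathbf{1}_B\rangle > 0$ for every $A \succ B$. The plan is to find, for each strict pair $A^* \succ B^*$ separately, a nonnegative vector $p^{A^*,B^*}$ satisfying all weak inequalities and witnessing this single strictness; then averaging these vectors yields a vector strict on every strict pair, and normalizing (using $\mathsf{NonDeg}$, which guarantees $\Omega \succ \varnothing$ and hence a positive normalization constant) produces the desired $\mathbb{P}$.

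Each $p^{A^*,B^*}$ is obtained by contradiction using Motzkin's theorem of the alternative. Infeasibility would furnish nonnegative rationals $y > 0$, $\mu_{C,D} \geq 0$ for $C \succeq D$, and $\lambda_\omega \geq 0$ with
\[
y(\mathbf{1}_{A^*} - \mathbf{1}_{B^*}) + \sum_{C \succeq D} \mu_{C,D}(\mathbf{1}_C - \mathbf{1}_D) + \sum_{\omega} \lambda_\omega \mathbf{1}_\omega \;=\; 0.
\]
Invoking $\mathsf{NonTriv}$ (so $(\omega,\varnothing)$ is an admissible pair for each $\omega$), we absorb each $\lambda_\omega \mathbf{1}_\omega = \lambda_\omega (\mathbf{1}_\omega - \mathbf{1}_\varnothing)$ into the middle sum. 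After clearing denominators and repeating pairs according to their (now integer) multiplicities, we obtain a finite list $(C_1,D_1),\ldots,(C_m,D_m)$ with $C_i \succeq D_i$, together with $y \geq 1$ copies of $(A^*,B^*)$, satisfying $y\mathbf{1}_{A^*} + \sum_i \mathbf{1}_{C_i} = y\mathbf{1}_{B^*} + \sum_i \mathbf{1}_{D_i}$. Arranging these as sequences $(X_j)_{j\leq n}$ and $(Y_j)_{j\leq n}$ of length $n = y + m$ with the final pair chosen to be one copy of $(A^*,B^*)$ yields balanced sequences with $X_j \succeq Y_j$ for $j < n$. Then $\mathsf{FinCan}_n$ concludes $Y_n \succeq X_n$, i.e.\ $B^* \succeq A^*$, contradicting $A^* \succ B^*$.

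The main obstacle is the linear-programming bookkeeping: correctly translating the Motzkin certificate into a balanced cancellation instance. The role of $\mathsf{NonTriv}$ is to absorb the nonnegativity constraints $p \geq 0$ into the cancellation axiom (by treating every atom as an event dominated by it), and the positivity $y \geq 1$ in the certificate is precisely what guarantees that a copy of the problematic pair $(A^*,B^*)$ can be placed in the final position, so that the conclusion of $\mathsf{FinCan}_n$ actually refutes $A^* \succ B^*$ rather than trivially reaffirming $A^* \succeq B^*$.
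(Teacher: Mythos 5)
Your proof is correct, and it rests on the same underlying mathematics as the paper's: identify events with indicator vectors, reduce representability to the feasibility of a system of linear inequalities, and show that an infeasibility certificate supplied by a theorem of the alternative translates into a pair of balanced sequences violating $\mathsf{FinCan}_n$. The execution differs in two respects. First, the paper delegates the existence of the order-preserving functional to Scott's Lemma (Lemma~\ref{replemma}), cited as a black box, and only afterwards sketches the Motzkin-certificate picture as commentary; you instead prove the relevant instance of that lemma from scratch. Second, where the paper's sketch treats all strict inequalities simultaneously in one Motzkin system, you invoke only the single-strict-inequality alternative (essentially Farkas' lemma), producing one witness per strict pair and then averaging --- a legitimate device, since the weak constraints and nonnegativity are preserved under convex combination while each strictness survives in the average, and $\mathsf{NonDeg}$ guarantees at least one strict pair so the average is over a nonempty set. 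You are also more explicit than the paper about two points it glosses over: absorbing the constraints $p \geq 0$ into the cancellation machinery via $\mathsf{NonTriv}$ (reading $\mathbbm{1}_{\omega}$ as $\mathbbm{1}_{\omega} - \mathbbm{1}_{\varnothing}$ for an admissible pair $\omega \succeq \varnothing$), and using $\mathsf{NonDeg}$ together with totality to obtain a strictly positive normalizing constant. I see no gap.
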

The result is proved by appeal to general results in linear algebra or linear programming (see, e.g., \citealt{scott1964measurement} or \citealt{narens07}). It is worth sketching the proof here in order to highlight the algebraic content of the finite cancellation rule, as well as to emphasize the key differences between the additive and multiplicative systems we investigate below: particularly,  the distinct tools and proof techniques involved.

To prove the right-to-left direction of Theorem \ref{SCOTTTHM}, we first formulate the task as an algebraic problem. Consider an order $\succeq$ on events in the algebra $(\Omega,\mathcal{F})$ satisfying the properties listed above. Take the vector space $\mathbb{R}^{n}$. Each event $A$ is identified with the vector $\ind{A}$ of its indicator function: that is, the vector $(v_{1},\dots,v_{n})$ where $v_{i}= \ind{A}(\omega_{i})$. Finding a measure representing $\succeq$ amounts to finding a linear functional $\Phi:\mathbb{R}^{n}\rightarrow \mathbb{R}$ with the property that $\Phi (\ind{A})\geq \Phi(\ind{B})$ iff $A\succeq B$. The map $A\mapsto \Phi(\ind{A})$ can then be seen as a (non-normalised) additive measure on $(\Omega,\mathcal{F})$. Note that the linearity of $\Phi$ ensures that $\Phi(\ind{\varnothing})=\Phi(\mathbf{0})=0$. Further, $\mathsf{NonDeg}$ and $\mathsf{NonTriv}$ ensure that $\Phi(\ind{\Omega}) > 0$ and $\Phi(\ind{A})\geq 0$ for all $A\in\mathcal{F}$. Importantly, additivity is also guaranteed: when $A\cap B\neq \varnothing$, we have $\ind{A\cup B}=\ind{A}+\ind{B}$ in $\mathbb{R}^{n}$, and by the linearity of $\Phi$ we have $\Phi(\ind{A\cup B})= \Phi(\ind{A}+\ind{B}) = \Phi(\ind{A})+\Phi(\ind{B})$. This means that we can define the desired probability measure in the obvious way: set $\prob(A):=\Phi(\ind{A})/\Phi(\ind{\Omega})$.

To find such an order-preserving linear functional, we can appeal to the following Lemma, due to \cite{scott1964measurement}:

\begin{lemma}[Scott's Lemma]\label{replemma}
Let $V$ a finite-dimensional real vector space, and $(M,\succeq)$ a finite relational structure with $M\subseteq V$ and $M$ a set of vectors with coordinates in $\mathbb{Q}$. Then there exists a linear functional $\Phi:V\rightarrow \mathbb{R}$ satisfying $\mb{w}\succeq \mb{v} \Leftrightarrow\Phi(\mb{v})\succeq\Phi(\mb{w})$ if and only if
\begin{itemize}
    \item[(a)] $\forall \mb{w},\mb{v}\in M$, $\mb{v}\succeq \mb{w}$ or $\mb{w}\succeq \mb{v}$
    \item[(b)] if $\sum^{n}_{i=1}\mb{v}_{i} = \sum^{n}_{i=1}\mb{w}_{i}$ and $\forall i<n, \mb{v}_{i}\succeq \mb{w}_{i}$ then $\mb{v}_{n}\preceq \mb{w}_{n}$.
\end{itemize}
\end{lemma}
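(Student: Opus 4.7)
The forward direction is routine: given such a $\Phi$, condition (a) reduces to comparability of $\geq$ on $\mathbb{R}$, and for (b) we observe that linearity forces $\sum_i\Phi(\mb{v}_i)=\sum_i\Phi(\mb{w}_i)$, so if $\Phi(\mb{v}_i)\geq\Phi(\mb{w}_i)$ for all $i<n$ the remaining term must satisfy the reverse inequality. The real content is in the converse, and my plan is to prove it by the standard convex-geometry route: translate the problem into one of separating a polyhedral cone from a finite set of points, and use condition (b) to verify the relevant non-containment.

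The first step is to pass from the order $\succeq$ to differences. Set $D^{\succeq}=\{\mb{v}-\mb{w}:\mb{v}\succeq\mb{w}\}$, $D^{\succ}=\{\mb{v}-\mb{w}:\mb{v}\succ\mb{w}\}$ and $D^{\approx}=\{\mb{v}-\mb{w}:\mb{v}\approx\mb{w}\}$, all finite subsets of $V$ with rational coordinates. A linear functional $\Phi$ of the desired kind exists if and only if there is $\Phi$ with $\Phi\equiv 0$ on $\mathrm{span}(D^{\approx})$ and $\Phi(d)>0$ for every $d\in D^{\succ}$ (non-negativity on $D^{\succeq}$ then follows from the totality of $\succeq$). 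Without loss of generality we can work in the finite-dimensional space $V'=\mathrm{span}(D^{\succeq})$ and extend $\Phi$ to $V$ arbitrarily at the end.

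The key step is the separation itself. Let $K\subseteq V'$ be the convex cone generated by $D^{\succeq}\cup(-D^{\approx})$; this is a finitely generated (polyhedral) rational cone. I will show that for every $d_{0}\in D^{\succ}$, the point $-d_{0}$ does not lie in $K$. Suppose for contradiction it did: then there are non-negative rationals $\lambda_{i}$ and real numbers (in fact rationals) $\mu_{j}$ with $-d_{0}=\sum_{i}\lambda_{i}(\mb{v}_{i}-\mb{w}_{i})+\sum_{j}\mu_{j}(\mb{v}'_{j}-\mb{w}'_{j})$, the first sum over pairs from $D^{\succeq}$ and the second over pairs from $D^{\approx}$. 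Clearing denominators (possible since all coordinates are rational) and moving $d_{0}$ to the right, we get positive integers $n_{i},m_{j}^{+},m_{j}^{-}$ and an equation $\sum_{i}n_{i}\mb{v}_{i}+\sum_{j}m_{j}^{+}\mb{v}'_{j}+\sum_{j}m_{j}^{-}\mb{w}'_{j}+\mb{v}_{0}=\sum_{i}n_{i}\mb{w}_{i}+\sum_{j}m_{j}^{+}\mb{w}'_{j}+\sum_{j}m_{j}^{-}\mb{v}'_{j}+\mb{w}_{0}$. Listing each pair with multiplicity yields two balanced sequences in which every pair satisfies $\succeq$ on the right-versus-left side, and in particular the pair $(\mb{v}_{0},\mb{w}_{0})$ satisfies $\mb{v}_{0}\succ\mb{w}_{0}$; applying (b) to the indexing that puts this pair last forces $\mb{v}_{0}\preceq\mb{w}_{0}$, contradicting $\mb{v}_{0}\succ\mb{w}_{0}$.

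With this non-containment in hand, I invoke a standard theorem of the alternative (Farkas' lemma, or equivalently hyperplane separation for polyhedral cones in finite-dimensional rational vector spaces): for each $d\in D^{\succ}$ there is a linear functional vanishing on $\mathrm{span}(D^{\approx})$, non-negative on $K$, and strictly positive at $d$. Taking any strictly positive convex combination of the finitely many such functionals (one per $d\in D^{\succ}$) produces a single $\Phi_{0}$ on $V'$ that is strictly positive on all of $D^{\succ}$, zero on $D^{\approx}$, and non-negative on $D^{\succeq}$. Extending $\Phi_{0}$ arbitrarily (e.g.\ by zero on a complement of $V'$ in $V$) gives the desired $\Phi$. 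The main obstacle is the non-containment argument in the preceding paragraph: one has to be careful to book-keep multiplicities when clearing denominators, and to use condition (b) together with the totality (a) to preclude $\mb{v}_{0}\approx\mb{w}_{0}$ in the presence of the strict pair. The rationality of the coordinates of $M$ is essential here — without it, one would only get approximate cancellation and would need a closure argument that condition (b) does not directly support.
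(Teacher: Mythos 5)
Your proof is correct and follows essentially the same route the paper indicates: the paper states the lemma as a citation to Scott (1964) and then sketches exactly this argument (hyperplane separation of cones of difference vectors, with a Motzkin/Farkas-style theorem of the alternative whose integer infeasibility certificates correspond to balanced sequences violating condition (b)), which your proposal fills in correctly, including the essential use of rationality to clear denominators into integer multiplicities. The only blemish is a harmless bookkeeping slip: after clearing denominators the strict pair $(\mb{v}_0,\mb{w}_0)$ should appear with multiplicity $N$ rather than $1$, which does not affect the contradiction.
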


We obtain the desired functional by applying the Lemma to the structure $(M, \succeq)$, where $M=\{\ind{A}\,|\, A\in\mathcal{F}\}$, and we lift the $\succeq$ relation to $M$ by setting $\ind{A}\succeq \ind{B}$ if and only if $A\succeq B$. In particular, note that property (b) of the Lemma, when applied to vectors of the form $\mb{v}_{i}= \ind{A_i}$ and $\mb{w}_{i} = \ind{B_{i}}$, corresponds precisely to the finite cancellation axiom scheme $\mathsf{FinCan}_n$. In this way, Theorem \ref{SCOTTTHM} is established. 

In order to get a better grasp on the algebraic content the finite cancellation axioms, it is informative to consider the following simple geometric description of the problem. We want a linear functional $\Phi$ to have the property that 
$\Phi(\ind{A}-\ind{B})\geq 0$ if $A\succeq B$, and 
$\Phi(\ind{A}-\ind{B}) < 0$ if $A\not\succeq B$. Each linear functional on $\mathbb{R}^{n}$ can be written in the form $\Phi(\mathbf{v})=\mathbf{w}^{T}\mathbf{v}$ for some vector $\mathbf{w}$. This means that, geometrically, finding a linear functional of the desired kind amounts to finding a hyperplane separating (the cones generated by) the sets $\{\ind{A}-\ind{B}\,|\, A\succeq B\}$ and $\{\ind{A}-\ind{B}\,|\, A\not\succeq B\}$. Given such a hyperplane with normal $\mathbf{w}$, we have that $\mathbf{w}^T(\ind{A}-\ind{B})\geq 0$ for all $A$, $B$ such that $A\succeq B$ (the angle between $\mathbf{w}$ and $(\ind{A}-\ind{B})$ is right or acute) and $\mathbf{w}^T(\ind{A}-\ind{B})< 0$ for all $A$, $B$ such that $B\succ A$ (the angle between $(\ind{A}-\ind{B})$ and $\mb{w}$ is obtuse). We thus need to solve the following system of linear inequalities for $\mb{w}\in\mathbb{R}^n$. 

\begin{align*}
\mb{w}^T (\ind{A}-\ind{B}) &\geq 0  \text{ for all $A,B\in\mathcal{F}$ such that $A\succeq B$}\\
\mb{w}^T (\ind{A}-\ind{B}) &< 0 \text{  for all $A,B\in\mathcal{F}$ such that $B\succ A$ } 
\end{align*}

A well-known theorem of the alternative (\citealt{Motzkin}; see also \citealt{Schrijver}) states that a system like the above fails to have a solution only if there exists an (integer-valued) \emph{certificate of infeasibility}. A certificate of infeasibility for the linear system given here translates into the existence of two balanced sequences of events which violate an instance of $\mathsf{FinCan}_n$. The finite cancellation conditions ensure the nonexistence of certificates of infeasibility for the system of linear inequalities expressed by the order $\succeq$. 

With Theorem \ref{SCOTTTHM} at hand, one can show by standard logical methods that adding the infinite axiom scheme $\mathsf{FinCan}_n$  to $\mathsf{AX}_{\text{base}}$ yields a complete axiomatization of $\mathcal{L}_{\text{comp}}$-validities (together with a simple extensionality axiom). 
\begin{theorem}[\citealt{Segerberg1971,Gardenfors1975}]
$\mathcal{L}_{\text{comp}}$ is completely axiomatized by $\mathsf{AX}_{\text{base}}$ together with the following: 
\begin{eqnarray*}
\mathsf{FinCan}_n. & & \big( (\alpha_1,\dots,\alpha_n)\equiv_0(\beta_1,\dots,\beta_n) \wedge \alpha_1\succsim\beta_1 \wedge \dots \wedge \alpha_{n-1}\succsim\beta_{n-1}   \big) \rightarrow \beta_n \succsim \alpha_n. \\
\mathsf{Ext}. & &  \big((\alpha_{1}\leftrightarrow\alpha_2 \succsim \top) \wedge (\beta_{1}\leftrightarrow\beta_2 \succsim \top)\big) \to \big((\alpha_{1}\succsim \beta_{1}) \to (\alpha_{2}\succsim \beta_{2})\big)
\end{eqnarray*}
\end{theorem}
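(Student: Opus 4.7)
The plan is to derive completeness as a corollary of the Kraft--Scott representation theorem (Theorem \ref{SCOTTTHM}) stated above, with the axiomatization mirroring its hypotheses. Soundness is routine: each axiom in $\mathsf{AX}_{\text{base}}$ is clearly valid, $\mathsf{Ext}$ holds because logically equivalent formulas receive equal probability, and $\mathsf{FinCan}_n$ is sound because balancedness of $(\alpha_1,\dots,\alpha_n)$ and $(\beta_1,\dots,\beta_n)$ forces the identity $\sum_{i \leq n}\mathbb{P}([\![\alpha_i]\!]) = \sum_{i \leq n}\mathbb{P}([\![\beta_i]\!])$, which is incompatible with $\mathbb{P}([\![\alpha_i]\!]) \geq \mathbb{P}([\![\beta_i]\!])$ for every $i < n$ and $\mathbb{P}([\![\alpha_n]\!]) > \mathbb{P}([\![\beta_n]\!])$.

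For completeness, I argue contrapositively: every consistent formula $\varphi$ has a model. Put $\varphi$ in disjunctive normal form and pick any consistent disjunct $\psi$; by the comparability half of $\mathsf{Lin}$, negated atoms $\neg(\alpha \succsim \beta)$ may be rewritten as $\beta \succ \alpha$, so $\psi$ reduces to a finite conjunction of (strict or non-strict) comparisons over a finite set $\mathcal{A}$ of propositional atoms. Via a standard Lindenbaum construction in the finite signature $\sigma(\mathcal{A})$, I extend the set of conjuncts to a maximal consistent set $\Gamma$ of atomic comparisons between members of $\sigma(\mathcal{A})$. By $\mathsf{Ext}$, $\Gamma$ descends to a well-defined binary relation $\succeq$ on the finite Boolean algebra $\mathcal{F} = \sigma(\mathcal{A})/{\equiv}$; by maximality and the comparability half of $\mathsf{Lin}$, this relation is a reflexive total preorder. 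The hypotheses of Theorem \ref{SCOTTTHM} are then in place: totality from $\mathsf{Lin}$, non-degeneracy from $\mathsf{NonDeg}$, the condition $A \succeq \varnothing$ from $\mathsf{Dist}$ applied to $\bot \to \alpha$, and the semantic finite-cancellation from the syntactic scheme $\mathsf{FinCan}_n$ (the two match precisely, since balanced sequences of events correspond to formula sequences satisfying (\ref{Balancedsequences})). Applying the theorem yields a probability measure $\mathbb{P}$ on $(\Omega, \mathcal{F})$ with $\Omega = \Delta_{\mathcal{A}}$ representing $\succeq$; the induced model satisfies each non-strict conjunct of $\psi$ directly, and each strict conjunct $\beta \succ \alpha$ because $\beta \succeq \alpha$ together with $\alpha \not\succeq \beta$ yields $\mathbb{P}([\![\beta]\!]) > \mathbb{P}([\![\alpha]\!])$.

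The main obstacle I anticipate is the interface between the syntactic system and the semantic hypotheses of Theorem \ref{SCOTTTHM}: specifically, I must argue both that the Lindenbaum extension succeeds (for each undecided pair $(\alpha, \beta)$, at least one of $\alpha \succsim \beta$ or $\beta \succsim \alpha$ is consistent with the current $\Gamma$) and that the induced relation on $\mathcal{F}$ inherits the semantic finite-cancellation property from the syntactic $\mathsf{FinCan}_n$. The former is routine propositional reasoning (if both extensions were inconsistent then $\Gamma$ itself would be), while the latter is immediate from the definitional match between the balancedness abbreviation in (\ref{Balancedsequences}) and balancedness of indicator sums. Once these bridging lemmas are in place, the heavy analytic and geometric work is handled entirely by Theorem \ref{SCOTTTHM}.
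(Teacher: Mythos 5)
Your proposal is correct and follows exactly the route the paper intends: the paper gives no detailed proof of this theorem, remarking only that it follows from Theorem \ref{SCOTTTHM} "by standard logical methods," and your Lindenbaum-plus-representation argument is precisely that standard derivation, with the bridging steps (consistency of the extension via comparability, descent to the quotient algebra via $\mathsf{Ext}$, and the match between syntactic and semantic balancedness via $\mathsf{Dist}$) correctly identified and handled.
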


Three points are worth noting. First, the techniques required for proving the completeness result belong entirely to the standard toolkit of linear algebra. We see how this canonical axiomatization of $\mathcal{L}_{\text{comp}}$ is based on hyperplane separation methods. $\mathcal{L}_{\text{comp}}$ can only express linear constraints on the representing probability measure; the axiom schemes ensure precisely the consistency of the systems of linear inequalities that the language can consistently express. As we will see, this will no longer hold in any of the \emph{multiplicative} systems we will consider, which can express polynomial constraints: there, proving completeness will require showing that the system is powerful enough to prove the consistency of certain systems of polynomial inequalities. Thus the study of multiplicative probability logics involves techniques from semialgebraic geometry. 

Second, in this case the linear functional in the representation theorem can in fact always be taken to be \emph{rational-valued}: in other words, no constraints expressible in $\mathcal{L}_{\text{comp}}$ can force the probability of an event to have an irrational value.  This entails that any consistent formula in $\mathcal{L}_{\text{comp}}$ has a model where the probabilities are all rational. As a consequence, one can show that  $\mathcal{L}_{\text{comp}}$ is sound an complete with respect to finite \emph{counting} models, in which $\alpha\succeq \beta$ holds exactly if more states in the model satisfy $\alpha$ than $\beta$ \citep{Hoek1996b}. This is not in general the case for multiplicative systems: as we saw above, the ability to express polynomial constraints can force \emph{irrational} probabilities for some events.

Lastly, we saw this canonical axiomatization is infinite. In the next section, we will show that one can avoid the infinite cancellation scheme by enriching our base system with a powerful proof rule. In Section \ref{section: finite axiomatizability}, we will show that, without such a strong proof rule, an infinite axiom scheme is unavoidable: within the basic rules of system $\mathsf{AX}_{\text{base}}$, the logic $\mathcal{L}_{\text{comp}}$ is not finitely axiomatizable.


\subsubsection{Polarization}\label{Polarizationsection}
Intuitively, if we could only `duplicate' formulas whenever we want to add probabilities for overlapping events, this would license the same reasoning capacities as with linear inequalities. Such a proof rule was introduced by \cite{burgess2010axiomatizing}, following \cite{kraft1959intuitive}. Suppose $A$ is a proposition letter that occurs nowhere in $\alpha$ or $\varphi$. Then the polarization rule says: 
\begin{eqnarray*}
\mathsf{Polarize}. & & \mbox{ From }(\alpha \wedge A)\approx(\alpha \wedge \neg A) \rightarrow \varphi\mbox{ infer }\varphi. 
\end{eqnarray*} The soundness of $\mathsf{Polarize}$ is straightforward to show (see \citealt{Ding2020}): if $\neg \varphi$ is satisfiable, it suffices to show that $\neg \varphi$ can be satisfied together with $(\alpha \wedge A) \approx (\alpha \wedge \neg A)$. This is achieved by duplicating $\semantics{\alpha}$, the extension of $\alpha$, and ensuring that all $A$-free formulas are thereby preserved.

Completeness, however, is less straightforward. Existing treatments show how the infinitary schema $\mathsf{FinCan}_n$ can be derived from $\mathsf{Polarize}$ (cf. \citealt{burgess2010axiomatizing,Ding2020}); as we saw in \S\ref{section:scott}, completeness of the infinitary system in turn depends on additional facts from linear algebra. Here we can give a more direct argument, showing exactly how polarization, together with de Finetti's quasi-additivity axiom, recapitulates the additive reasoning for variable elimination that we gave for $\mathsf{AX}_{\text{add}}$ (Theorem \ref{thm:add-complete}). 
Let $\mathsf{AX}_{\text{comp}}$ consist of the axioms and rules of $\mathsf{AX}_{\text{base}}$, plus $\mathsf{Quasi}$ and $\mathsf{Polarize}$ (Fig. \ref{fig-comp}).
\begin{figure}[t]
\begin{mdframed} \[\underline{\textbf{The Axioms and Rules of }\mathsf{AX}_{\textnormal{comp}}}\] \vspace{-.3in} 
\begin{eqnarray*}
\mathsf{Quasi}. & & \alpha\succsim \beta \leftrightarrow (\alpha \wedge \neg \beta) \succsim (\beta \wedge \neg \alpha)  \\
\mathsf{Polarize}. & & \mbox{From }(\alpha \wedge A)\approx(\alpha \wedge \neg A) \rightarrow \varphi\mbox{ infer }\varphi
\end{eqnarray*}
\end{mdframed}
\caption{$\mathsf{AX}_{\textnormal{comp}} \overset{\textnormal{def}}{=} \mathsf{AX}_{\textnormal{base}} + \mathsf{Quasi} + \mathsf{Polarize}$. \label{fig-comp}}
\end{figure}
 
Consider some finite set $\mathcal{A}$ of propositional atoms and suppose $A \notin \mathcal{A}$. For a formula $\varphi$ over $\mathcal{A}$, define the \emph{relativization} $\varphi^A$ to be the result of replacing every inequality $\varepsilon \succsim \zeta$ in $\varphi$ with $(\varepsilon \wedge A) \succsim (\zeta \wedge A)$. Let $\pi$ be the formula: \begin{eqnarray*}
\pi & \overset{\textnormal{def}}{=} & \bigwedge_{\delta \in \Delta}(\delta \wedge A) \approx (\delta \wedge \neg A),
\end{eqnarray*} where $\Delta$ is the set of state descriptions over $\mathcal{A}$. Then we have:
 \begin{lemma}[Relativization under Polarization] $\vdash_{\mathsf{AX}_{\textnormal{comp}}} \pi \rightarrow (\varphi \leftrightarrow \varphi^A)$. \label{lemma:rel}
\end{lemma}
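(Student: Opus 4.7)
The claim reduces immediately to the atomic case: since $\varphi^A$ is defined by replacing each atomic subformula $\varepsilon\succsim\zeta$ by $(\varepsilon\wedge A)\succsim(\zeta\wedge A)$, a straightforward induction on the Boolean structure of $\varphi$ reduces the goal to showing that, for every pair $\varepsilon,\zeta\in\sigma(\mathcal{A})$, one has $\pi\vdash_{\mathsf{AX}_{\text{comp}}}\varepsilon\succsim\zeta\leftrightarrow(\varepsilon\wedge A)\succsim(\zeta\wedge A)$. By two applications of $\mathsf{Quasi}$ (one to each side of the desired biconditional) together with Boolean simplification via $\mathsf{Dist}$, it suffices to treat the case where $\varepsilon$ and $\zeta$ are logically incompatible, since $(\varepsilon\wedge A)\wedge\neg(\zeta\wedge A)\leftrightarrow(\varepsilon\wedge\neg\zeta)\wedge A$ and likewise for the other side.

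The key auxiliary fact is the following: for every $\eta\in\sigma(\mathcal{A})$, $\pi\vdash_{\mathsf{AX}_{\text{comp}}}(\eta\wedge A)\approx(\eta\wedge\neg A)$. To prove this, write $\eta$ as a disjunction of distinct state descriptions from $\Delta$ (using $\mathsf{Dist}$) and induct on their number. The base case is a direct conjunct of $\pi$. For the inductive step, writing $\eta=\eta'\vee\delta$ with $\delta\in\Delta$ disjoint from $\eta'$, the formulas $\eta'\wedge A$, $\eta'\wedge\neg A$, $\delta\wedge A$, $\delta\wedge\neg A$ are all pairwise unsatisfiable; the two approximate-equalities $(\eta'\wedge A)\approx(\eta'\wedge\neg A)$ (induction hypothesis) and $(\delta\wedge A)\approx(\delta\wedge\neg A)$ (from $\pi$) combine by an additivity-for-incompatibles principle, which is obtained from $\mathsf{2CancQ}$ (Lemma~\ref{lemma:combq}) by instantiating its two auxiliary slots with $\bot$ and simplifying via $\mathsf{Zero}$ and $\mathsf{Dist}$, to yield $((\eta'\vee\delta)\wedge A)\approx((\eta'\vee\delta)\wedge\neg A)$.

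With the auxiliary fact in hand, the atomic case follows by a direct appeal to $\mathsf{DuplQ}$ (Lemma~\ref{lemma:dupq}). Under the assumption that $\varepsilon$ and $\zeta$ are incompatible, the four formulas $\varepsilon\wedge A$, $\varepsilon\wedge\neg A$, $\zeta\wedge A$, $\zeta\wedge\neg A$ are pairwise unsatisfiable; the auxiliary fact supplies the two hypotheses $(\varepsilon\wedge A)\approx(\varepsilon\wedge\neg A)$ and $(\zeta\wedge A)\approx(\zeta\wedge\neg A)$ needed to trigger $\mathsf{DuplQ}$, yielding
\[
\bigl((\varepsilon\wedge A)\vee(\varepsilon\wedge\neg A)\bigr)\succsim\bigl((\zeta\wedge A)\vee(\zeta\wedge\neg A)\bigr)\;\leftrightarrow\;(\varepsilon\wedge A)\succsim(\zeta\wedge A).
\]
Since $(\varepsilon\wedge A)\vee(\varepsilon\wedge\neg A)\leftrightarrow\varepsilon$ and similarly for $\zeta$, $\mathsf{Dist}$ delivers the desired $\varepsilon\succsim\zeta\leftrightarrow(\varepsilon\wedge A)\succsim(\zeta\wedge A)$, completing the induction on $\varphi$.

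The main obstacle lies in the inductive step of the auxiliary fact: one must be careful that the additivity-for-incompatibles principle really is derivable from $\mathsf{2CancQ}$ (by the $\bot$-padding trick above) and that the pairwise-incompatibility side-conditions of $\mathsf{2CancQ}$, $\mathsf{DuplQ}$, and every intermediate instance of $\mathsf{Quasi}'$ are genuinely satisfied. Everything else amounts to Boolean bookkeeping with $\mathsf{Dist}$ and transitivity, and in particular no use of $\mathsf{Polarize}$ itself is required—the rule will enter only in the next stage where the lemma is applied.
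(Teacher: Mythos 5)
Your proof is correct and follows essentially the same route as the paper's: reduce to the atomic case by Boolean reasoning, derive $(\varepsilon\wedge A)\approx(\varepsilon\wedge\neg A)$ and $(\zeta\wedge A)\approx(\zeta\wedge\neg A)$ from $\pi$, and then conclude via $\mathsf{DuplQ}$ and $\mathsf{Dist}$. Your version is in fact somewhat more careful than the paper's, which leaves implicit both the preliminary reduction (via $\mathsf{Quasi}$) to incompatible $\varepsilon,\zeta$ needed for the side-conditions of $\mathsf{DuplQ}$ and the $\mathsf{2CancQ}$-based combination step behind the auxiliary equiprobability fact.
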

\begin{proof} Consider any inequality $\varepsilon \succsim \zeta$ appearing in $\varphi$. The result will follow by Boolean reasoning if we can just show that $\varepsilon \succsim \zeta \leftrightarrow (\varepsilon \wedge A) \succsim (\zeta \wedge A)$ follows from $\pi$.

By $\mathsf{Dist}$ we can assume that $\varepsilon$ and $\zeta$ are disjunctions of state descriptions over $\mathcal{A}$. First observe that $(\varepsilon \wedge A) \approx (\varepsilon \wedge \neg A)$ follows from $\pi$, and the same for $(\zeta \wedge A) \approx (\zeta \wedge \neg A)$, using $\mathsf{Dist}$ and multiple instances of $\mathsf{DuplQ}$ (from Lemma \ref{lemma:dupq}). But then by another application of $\mathsf{DuplQ}$, where $\alpha = (\varepsilon \wedge A)$, $\beta = (\varepsilon \wedge \neg A)$, $\gamma = (\zeta \wedge A)$, and $\delta = (\zeta \wedge \neg A)$, and $\mathsf{Dist}$ once again, we derive $\varepsilon \succsim \zeta \leftrightarrow (\varepsilon \wedge A) \succsim (\zeta \wedge A)$.
\end{proof}
\begin{theorem}  $\mathsf{AX}_{\text{comp}}$ is sound and complete.
\end{theorem}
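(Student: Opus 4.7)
The plan is to reduce completeness for $\mathsf{AX}_{\text{comp}}$ to completeness of $\mathsf{AX}_{\text{add}}$ (Theorem~\ref{thm:add-complete}), using $\mathsf{Polarize}$ to simulate the integer multiplicities that arise in additive variable elimination. Soundness of $\mathsf{AX}_{\text{base}}$ and $\mathsf{Quasi}$ is routine; for $\mathsf{Polarize}$, if $\varphi$ fails in some model $\mathfrak{M}$ with $A$ absent from $\alpha$ and $\varphi$, one redefines $\llbracket A\rrbracket$ as a measurable set bisecting $\llbracket\alpha\rrbracket$ (chosen arbitrarily on $\llbracket\neg\alpha\rrbracket$), obtaining a model of $(\alpha\wedge A)\approx(\alpha\wedge\neg A)$ in which $\varphi$ still fails---contradicting validity of the premise of the rule.

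For completeness, given valid $\varphi\in\mathcal{L}_{\text{comp}}$, let $\mathcal{A}$ be the atoms of $\varphi$, and by $\mathsf{Dist}$ normalize each probability term to $\mathbf{P}(\bigvee S)$ for some $S\subseteq\Delta_{\mathcal{A}}$. Translating each such term into the additive sum $\sum_{\delta\in S}\mathbf{P}(\delta)$ yields a valid $\varphi^{+}\in\mathcal{L}_{\text{add}}$, whose $\mathsf{AX}_{\text{add}}$-derivation (furnished by Theorem~\ref{thm:add-complete}) involves integer coefficients bounded by some $K$. I would then introduce $m=\lceil\log_{2}K\rceil$ fresh atoms $A_{1},\dots,A_{m}$ and form the polarization premise $\pi$ asserting, for each $j\leq m$ and each state description $\delta$ over $\mathcal{A}\cup\{A_{1},\dots,A_{j-1}\}$, that $(\delta\wedge A_{j})\approx(\delta\wedge\neg A_{j})$. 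Under $\pi$, each original $\delta\in\Delta_{\mathcal{A}}$ is partitioned into $2^{m}$ equiprobable refined state descriptions, so any term $k\mathbf{P}(\delta)$ with $k\leq 2^{m}$ appearing in the additive derivation corresponds, modulo $\pi$, to $\mathbf{P}$ of a disjunction of $k$ pairwise-disjoint refined sub-descriptions of $\delta$---a genuine $\mathcal{L}_{\text{comp}}$ term.

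The simulation then proceeds by translating each elementary additive inference ($\mathsf{1Canc}$, $\mathsf{2Canc}$, $\mathsf{Dupl}$, $\mathsf{Sub1}$, $\mathsf{Sub2}$, $\mathsf{Elim}$) into an instance of $\mathsf{Quasi'}$, $\mathsf{2CancQ}$, $\mathsf{DuplQ}$, or $\mathsf{ContrQ}$ from Lemmas~\ref{lemma:combq}--\ref{lemma:dupq}, whose pairwise-disjointness hypotheses are automatically discharged by refined state descriptions; iterated application of Lemma~\ref{lemma:rel} (once per $A_{j}$) justifies passage between $\varphi$ and the relativized form used in the simulation. This establishes $\vdash_{\mathsf{AX}_{\text{comp}}}\pi\to\varphi$, and discharging the conjuncts of $\pi$ by repeated applications of $\mathsf{Polarize}$ (exploiting the freshness of each $A_{j}$) finally yields $\vdash_{\mathsf{AX}_{\text{comp}}}\varphi$. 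The hardest part will be executing this simulation cleanly: each intermediate additive inequality---whose sides may formally exceed $1$---must be given a faithful $\mathcal{L}_{\text{comp}}$ representative via refined-description disjunctions, and the axiom-by-axiom translation must preserve the pairwise-disjointness hypothesis throughout the elimination. A further bookkeeping concern is arranging the polarization premises so that $\mathsf{Polarize}$---which discharges one equality at a time against a fresh atom---can successively eliminate all conjuncts of $\pi$.
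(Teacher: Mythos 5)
Your proposal follows essentially the same route as the paper's proof: obtain an $\mathsf{AX}_{\text{add}}$-derivation of $\varphi$, bound the integer multiplicities by $K$, introduce $\lceil\log_2 K\rceil$ fresh polarized atoms so that each $k$-fold sum $k\mathbf{P}(\delta)$ becomes a $k$-fold disjunction of equiprobable, pairwise-disjoint refined state descriptions, replay the variable elimination using the quasi-additive counterparts of $\mathsf{Elim}$, $\mathsf{Sub1}$, $\mathsf{Sub2}$, etc.\ together with Lemma~\ref{lemma:rel}, and finally discharge the polarization premise conjunct by conjunct. The one slip is in the soundness of $\mathsf{Polarize}$: in an arbitrary (e.g.\ atomic) model one cannot in general choose a measurable $\llbracket A\rrbracket$ bisecting $\llbracket\alpha\rrbracket$; the correct move, as in the paper, is to \emph{duplicate} the sample space, splitting each point of $\llbracket\alpha\rrbracket$ into two equiprobable copies distinguished by $A$, which preserves the truth of all $A$-free formulas while validating $(\alpha\wedge A)\approx(\alpha\wedge\neg A)$.
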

\begin{proof} The proof strategy is to follow the derivation of $\varphi$ from $\mathsf{AX}_{\text{add}}$, showing how to transform this into a derivation from $\mathsf{AX}_{\text{comp}}$ using polarization. Roughly speaking, the idea is to replace sums of probability terms with probabilities of disjunctions; polarization is used to ensure that the disjuncts can be mutually incompatible, by furnishing a sufficient number of `copies' of each disjunct. As we saw above, quasi-additivity is sufficiently strong when reasoning about incompatible disjuncts.

As in the proof of Theorem \ref{thm:add-complete}, by $\mathsf{Dist}$ we assume that for every inequality $\alpha \succsim \beta$ appearing in $\varphi$, both $\alpha$ and $\beta$ are disjunctions involving the finitely many state-descriptions $\delta \in \Delta$ over the propositional atoms $\mathsf{Prop}^{\varphi}$ that occur in $\varphi$. 


Because $\varphi$ is also in $\mathcal{L}_{\text{add}}$, the proof of Theorem \ref{thm:add-complete} furnishes a derivation of $\varphi$. Let $m$ be the maximum factor that appears anywhere in the derivation, that is, the largest number of times any term $\mathbf{P}(\delta)$ is added to itself. We introduce $n=\lceil\mbox{log}_2 m \rceil$ fresh proposition letters $\mathsf{Prop}^+ = \{A_1,\dots,A_n\}$. As the method below will mimic the previous derivation---and in particular will not introduce more factors---$n$ atoms, and thus $2^n \geq m$ state descriptions over those atoms, suffices. Define 
$\Delta_{k}$ to be the state descriptions over $\mathsf{Prop}^{\varphi} \cup \{A_1,\dots,A_k\}$, so in particular, $\Delta_0 = \Delta$, and $\Delta_n$ is the set of state descriptions over $\mathsf{Prop}^{\varphi} \cup \mathsf{Prop}^+$. 

We can now relativize $\varphi$ $n$ times, producing $\varphi^* = (\dots (\varphi^{A_1})\dots)^{A_n}$. And where \begin{eqnarray} \pi^* & \overset{\textnormal{def}}{=} & \bigwedge_{k< n} \bigwedge_{\delta \in \Delta_k} (\delta \wedge A_{k+1}) \approx (\delta \wedge \neg A_{k+1}), \label{eq:pi*}
\end{eqnarray} multiple applications of Lemma \ref{lemma:rel} allow us to conclude: \begin{equation}\vdash_{\mathsf{AX}_{\textnormal{comp}}} \pi^* \rightarrow (\varphi \leftrightarrow \varphi^*). \label{eq:restrict}\end{equation} Thus, under the assumption $\pi^*$, it suffices just to derive $\varphi^*$. 

Observe furthermore that we now essentially have $m$ copies of each state description $\delta$ over $\mathsf{Prop}^{\varphi}$, each copy `tagged' by a distinct state description $\sigma$ over $\mathsf{Prop}^+$. The conjunction $\delta \wedge \sigma$ is in fact an element of $\Delta_n$, i.e., a state description over $\mathsf{Prop}^{\varphi} \cup \mathsf{Prop}^+$. It is straightforward to show that $\mathsf{AX}_{\textnormal{comp}}$ proves $\pi^* \rightarrow (\delta \wedge \sigma_i) \approx (\delta \wedge \sigma_j)$ for each $\delta \in \Delta$ and all $\sigma_i\neq \sigma_j$; that is, every pair of elements of $\Delta_n$ that agree on $\mathsf{Prop}^{\varphi}$ are provably equiprobable. Because $\sigma_i\neq\sigma_j$ we also know that all pairs $\delta \wedge \sigma_i$, $\delta \wedge \sigma_j$ are jointly unsatisfiable. 

Thus, suppose that $\varphi^*$ is valid, that $\neg \varphi^*$ is in disjunctive normal form, and consider any disjunct, a conjunction of equality statements ($\approx$) and strict inequalities ($\succ$) between disjunctions of \emph{relativized} state descriptions; that is, each (in)equality is between disjunctions of conjunctions $\delta \wedge \bigwedge_{i\leq n}A_i$, where $\delta \in \Delta$. It remains to show that each step of the variable elimination (or `$\delta$-elimination') argument from Theorem \ref{thm:add-complete} can be emulated here. 

Our aim is to eliminate the `variable' $x = (\delta \wedge \sigma_1)$, where $\sigma_1 = \bigwedge_{i \leq n}A_i$. Let $kx$ stand for any $k$-fold disjunction $(\delta \wedge \sigma_{i_1}) \vee \dots \vee (\delta \wedge \sigma_{i_k})$, analogous to the $k$-fold sum $\mathbf{P}(\delta)+\dots+\mathbf{P}(\delta)$ in the proof of Theorem \ref{thm:add-complete}. At the start we will always have $k=1$, but as we proceed through the elimination of variables some will appear with greater multiplicity (but again, no greater than $m$). Thus, in general, every conjunct containing $x$ will have one of the following three forms: 
\begin{multicols}{3}
\begin{enumerate}[label=(\roman*)]
\item \label{f12} $kx \vee \alpha \approx \beta$
\item \label{f22} $kx \vee \alpha \succ \beta$
\item \label{f32} $\beta \succ kx \vee \alpha$
\end{enumerate} \end{multicols} \noindent where $kx$, $\alpha$, and $\beta$ are all mutually incompatible Boolean formulas. Here we are using $\mathsf{Dist}$, $\mathsf{DuplQ}$, and $\mathsf{Quasi'}$, which also allow us to assume $k$ is the same across conjuncts containing $x$. Note also that either of $\alpha$ or $\beta$ can be the empty disjunction $\bot$. We now carry out the same case distinctions as in the proof of Theorem \ref{thm:add-complete}:
\begin{enumerate}[label=(\alph*)]
  \item \emph{There are only conjuncts of type \ref{f32}.} In this case we can simply replace each instance $\beta \succ kx \vee \alpha$ with $\beta \succ \alpha$, which results in an equisatisfiable formula. The latter can be derived from the former by $\mathsf{Dist}$ and transitivity of $\succsim$. 
  \item \emph{There are only conjuncts of type \ref{f22}.} The entire disjunct will then make no difference to satisfiability, so we can replace it with any tautology. 
  \item \emph{There are conjuncts of both types \ref{f22} and \ref{f32}.} This case is handled by the quasi-additive version\footnote{That is, $((\varepsilon \vee \alpha) \succ \beta \wedge \gamma \succ (\varepsilon \vee \delta)) \rightarrow (\alpha \vee \gamma) \succ (\beta \vee \delta)$.} of $\mathsf{Elim}$. For each pair $kx \vee  \alpha \succ \beta$ and $\gamma \succ kx\vee \delta$, we want to replace it with a new conjunct $\alpha \vee \gamma \succ \beta \vee \delta$, which does not involve $x$. But this is only guaranteed to produce an equisatisfiable result if $\alpha$ and $\gamma$ do not share disjuncts (and the same for $\beta$ and $\delta$). If $\alpha$ and $\gamma$ share a disjunct, then we let $\gamma'$ be just like $\gamma$ but with a distinct $\sigma_i$ for that disjunct. Because all such copies of the disjunct are provably equiprobable, we have $\gamma \approx \gamma'$, and thus $\gamma' \succ kx \vee \delta$. Performing any necessary analogous replacement to obtain $\delta'$ in addition, the result, $(\alpha \vee \gamma') \succsim (\beta \vee \delta')$, in place of $kx \vee  \alpha \succ \beta$ and $\gamma \succ kx\vee \delta$---again, derivable from the latter by the variant of $\mathsf{Elim}$---will give an equisatifiable transformation of the original formula, now without any appearance of $x$. 
  \item \emph{There is at least one conjunct of type \ref{f12}.} In this case, the quasi-additive versions\footnote{To wit: $(\varepsilon \vee \gamma)\approx \alpha \rightarrow ((\alpha \vee \delta)\succsim(\beta \vee \gamma) \leftrightarrow (\varepsilon \vee \delta)\succsim \beta)$ and $(\varepsilon \vee \gamma)\approx \alpha \rightarrow ((\beta \vee \gamma)\succsim(\alpha \vee \delta) \leftrightarrow \beta\succsim(\varepsilon \vee \delta))$.} of $\mathsf{Sub1}$ and $\mathsf{Sub2}$ allow eliminating $x$ from every other conjunct of type \ref{f12}, as well as all conjuncts of types \ref{f22} and \ref{f32}. As in the previous case, we may need to use duplicates of state descriptions, but this can be done in the very same manner.
  
  It remains to show that $x$ can be eliminated from the last equality $kx \vee \alpha \approx \beta$. Since $x$ appears nowhere else in the conjunct, the whole formula will be equisatisfiable with the result of replacing  $kx \vee \alpha \approx \beta$ with $\beta\succsim\alpha$. The latter is derivable from the former by $\mathsf{Dist}$ and transitivity of $\succsim$.
\end{enumerate}
Finally, after eliminating all variables, we will end up with a conjunction of statements each provably equivalent (by $\mathsf{Dist}$) to either $\bot \succsim \bot$ or $\neg \bot \succsim \bot$. Unsatisfiability of $\neg \varphi^*$ means the latter must be a conjunct, but this formula is refutable in  $\mathsf{AX}_{\text{comp}}$. 

Consequently $\varphi^*$ is derivable, and by (\ref{eq:restrict}), $\varphi$ is itself derivable, assuming $\pi^*$. That is, we have shown that  $\vdash_{\mathsf{AX}_{\text{comp}}}\pi^* \rightarrow \varphi$. Because $\varphi$ does not involve any of the new atoms in $\mathsf{Prop}^+$, we can iteratively discharge the assumption of $\pi^*$ by $\mathsf{Polarize}$, conjunct by conjunct from (\ref{eq:pi*}). 
\end{proof}

\subsection{Finite Axiomatizability}\label{section: finite axiomatizability}

We saw that the canonical axiomatization of the logic of comparative probability $\mathcal{L}_{\text{comp}}$ features infinitely many axiom schemes. The finite cancellation axioms feature a separate axiom scheme $\varphi_n (\alpha_{1},...\alpha_{k_n})$ for each $n\in \mathbb{N}$, where the $\alpha$'s range over the Boolean formulas. By contrast, observe that the system $\mathsf{AX}_{\text{add}}$ for the logic of (explicitly arithmetical) additive comparisons is finitely (scheme-)axiomatizable, in the sense that it is given by a finite set of axiom schemes over $\mathsf{AX}_{\text{base}}$: that is, the axiomatization features only finitely many axiom schemes of the form $\varphi_n (\alpha_{1},...\alpha_{k_{n}})$, where the $\alpha$'s range over the Boolean formulas, and finitely many axiom schemes $\psi_n (\mathsf{t}_{1},...\mathsf{t}_{k_{n}})$, where the $\mathsf{t}_{i}$'s range over the terms in the language. This notion of finite axiomatizability\textemdash axiomatizabilty by finitely many schemes\textemdash is the natural one to consider in our propositional setting. The standard axiomatizations of finite comparative probability structures in the literature are given by schemes of this form, with an implicit universal quantification over events. For $\mathcal{L}_{\text{comp}}$, finite (scheme-)axiomatizability in our sense corresponds to the finite axiomatizability of comparative probability structures, over finite structures,\footnote{Recall that a class of structures $\mathbb{K}$ is axiomatized by $\Gamma$ over finite structures if $\mathbb{K}=\text{Mod}(\Gamma) \cap \mathsf{Fin}$: that is, $\mathbb{K}$ is exactly the class of \emph{finite} models of $\Gamma$.} by a universal sentence in a first-order language with quantification over events (or finite axiomatizability \emph{tout court}, if we include a uniform substitution rule in our system). 

Here we show that, for the less expressive language $\mathcal{L}_{\text{comp}}$, the presence of infinitely many schemes is inevitable. As opposed to explicitly `arithmetical' system $\mathsf{AX}_{\text{add}}$, the logic of $\mathcal{L}_{\text{comp}}$ is not finitely axiomatizable in that sense. Unless we enrich the system with powerful additional inference rules like we did in Section \ref{Polarizationsection}, no finite set of axiom schemes can capture the $\mathcal{L}_{\text{comp}}$-validities.

Our proof proceeds in two steps. We first note that a finite axiomatization of $\mathcal{L}_{\text{comp}}$ would result in a finite universal axiomatization, over finite structures, of the class of comparative probability orders in the first-order language of ordered Boolean algebras. We then appeal to a variant of a theorem of \cite{vaught1954} to show that comparative probability orders are not finitely axiomatizable by a universal sentence over finite structures. The construction appeals to combinatorial results by  \cite{fishburn1997failure} on finite cancellation axioms. 

\subsubsection{Vaught's theorem and finite axiomatizability}

We work with the language $\mathcal{L}_{\mathsf{BA}}\cup\{\succsim\}$ of Boolean algebras with an additional binary relation. The signature $\mathcal{L}_{\mathsf{BA}}$ is given by $(\mb{0}, \mb{1}, \otimes, \oplus, \cdot^{\bot})$ where the constant symbols $\mb{0}$ and $\mb{1}$ stand for the bottom and top element, the binary function symbols $\otimes$ and $\oplus$ stand for the Boolean meet and join operations, respectively, and the unary function symbol $\cdot^{\bot}$ stands for Boolean complementation. Consider the following class of  $\mathcal{L}_{\mathsf{BA}}\cup\{\succsim\}$-structures.

\begin{definition}[The class $\mathsf{FCP}$]
The class $\mathsf{FCP}$ of \emph{finite comparative probability structures} consists of all finite Boolean algebras with a representable comparative probability order: i.e., all structures of the form $$\mathcal{A}=(A, \mb{0}_{\mathcal{A}},\mb{1}_{\mathcal{A}}, \otimes, \oplus, \cdot^{\bot}, \succsim)$$ 
where $(A, \mb{0}_{\mathcal{A}},\mb{1}_{\mathcal{A}}, \otimes, \oplus, \cdot^{\bot})$ is a Boolean algebra, and the order $\succsim$ on $A$ is representable by a probability measure, in that there exists some probability measure $\prob$ on $\mathcal{A}$ such that, for all $a,b\in A$, we have $
a\succsim b \,\,\text{ if and only if }\,\,\prob(a)\geq\prob(b)
$.
\end{definition}
 
 We can naturally translate each $\varphi$ in $\mathcal{L}_{\text{comp}}$ into a universally quantified $\widehat{\varphi}$ in  $\mathcal{L}_{\mathsf{BA}}\cup\{\succsim\}$: we assign a variable $p_{i}^{\ast}:= x_{i}$ to each atomic $p_{i}$, and extend it in the obvious way so that each Boolean expression $\beta$ gets assigned a corresponding term $\beta^{\ast}\in\mathcal{L}_{\mathsf{BA}}$.\footnote{We let $(\neg \beta)^{\ast} := (\beta^{\ast})^{\bot}$, $(\alpha\wedge \beta)^{\ast} := \alpha^{\ast} \otimes \beta^{\ast}$ and $(\alpha\vee \beta)^{\ast} := \alpha^{\ast} \oplus \beta^{\ast}$.} For each term $\mathbf{P}(\beta)$, we have $\mathbf{P}(\beta)^{\ast} := \beta^{\ast}$, and each formula $\varphi\in \mathcal{L}_{\text{comp}}$ is translated into a quantifier free $\varphi^{\ast}\in\mathcal{L}_{\mathsf{BA}}\cup \{\succsim\}$.\footnote{Let $
(t_1\succsim t_2)^{\ast} :=t_1^{\ast} \succsim t_2^{\ast}$, $
(\varphi\wedge \psi)^{\ast} := \varphi^{\ast} \wedge \psi^{\ast}$, $(\varphi\vee \psi)^{\ast} := \varphi^{\ast} \vee \psi^{\ast}$, and $(\neg\varphi)^{\ast}:=\neg \varphi^{\ast}$.} Now take the translation that assigns, to each  $\varphi\in\mathcal{L}_{\text{comp}}$, the formula
$$
\widehat{\varphi} := \forall x_{1}...\forall x_{n} \varphi^{\ast} 
$$
where $p_{1},...,p_{n}$ are all the atomic propositions occurring in $\varphi$. Then $\varphi$ is valid on all probability models if and only if $\mathsf{FCP}\models \widehat{\varphi}$. In particular, a scheme $\varphi(\alpha_{1},\dots,\alpha_{n})$ is valid on all probability models if and only if $\mathsf{FCP}\models \widehat{\varphi}$. Observe also that, given Theorem \ref{SCOTTTHM}, the class $\mathsf{FCP}$ is axiomatized over finite structures by universal sentences, by taking standard universal axioms for Boolean algebras and adding the translations of the infinitely many $\mathsf{FinCan}_{n}$ axiom schemes (as well as the $\mathsf{AX}_{\text{base}}$ axioms).

To show that the logic of comparative probability $\mathcal{L}_{\text{comp}}$ is not finitely axiomatizable over $\mathsf{AX}_{\text{base}}$, it suffices to show that $\mathsf{FCP}$ is not axiomatizable over finite structures by a single universal sentence in $\mathcal{L}_{\mathsf{BA}}\cup\{\succsim\}$. For suppose there was a finite collection of schemes $\Delta=\{\psi_{1},...,\psi_{k}\}$ such that every $\mathcal{L}_{\text{comp}}$-validity over probability models followed from $\Delta$. Then $\widehat{\Delta}:=\{\widehat{\psi}_{1},\dots, \widehat{\psi}_{k} \}\subset \mathcal{L}_{\mathsf{BA}}\cup\{\succsim\}$ would finitely axiomatize the universal theory of $\mathsf{FCP}$ over finite structures. But $\mathsf{FCP}$ being universally axiomatizable, this would amount to an axiomatization of $\mathsf{FCP}$ by a single universal sentence (over finite structures). We will now show that $\mathsf{FCP}$ is not axiomatizable by a universal sentence over finite structures.

We recall one useful model-theoretic definition that will be needed. 

\begin{definition}[Uniformally locally finite classes]
A class $\mathbb{K}$ of first-order structures is \emph{uniformly locally finite} if there exists a function $f:\mathbb{N}\to\mathbb{N}$ such that for any $\mathfrak{M}\in\mathbb{K}$ and any subset $\{a_{1},\dots,a_{n}\}\subseteq \text{dom}(\mathfrak{M})$, we have $|\mathfrak{M}\langle\vec{a}\rangle| \leq f(n)$, where $\mathfrak{M}\langle\vec{a}\rangle$ is the substructure of $\mathfrak{M}$ generated by $\{a_{1},\dots,a_{n}\}$.
\end{definition}

We will make use of the following (minor variant of) Vaught's characterisation of structures axiomatizable by a universal sentence \citep{vaught1954}.

\begin{proposition}\label{VaughtProp}
Let $\mathbb{K}$ a uniformly locally finite class of structures in a finite first-order signature $\mathcal{L}$.  If $\mathbb{K}$ is axiomatizable by a universal sentence over finite structures, then
\begin{itemize}
    \item[(i)] $\mathbb{K}$ is closed under substructures
    \item[(ii)] There is some $n\in\mathbb{N}$ such that, for any finite structure $\mathfrak{A}$, if every substructure $\mathfrak{B}\subseteq \mathfrak{A}$ of size at most $n$ belongs to $\mathbb{K}$, then $\mathfrak{A}\in\mathbb{K}$.
\end{itemize}
\end{proposition}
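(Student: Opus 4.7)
The plan is to unpack the hypothesis and then apply the two standard preservation features of universal sentences in first-order logic. Suppose $\mathbb{K}$ is axiomatized over finite structures by a single universal sentence $\varphi = \forall x_{1}\cdots\forall x_{k}\,\psi(x_{1},\dots,x_{k})$ with $\psi$ quantifier-free, and let $f:\mathbb{N}\to\mathbb{N}$ be the uniform local finiteness function witnessing that substructures generated by $m$ elements have size at most $f(m)$.

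For part (i), I would invoke the classical fact that universal first-order sentences are preserved under substructures: if $\mathfrak{A}\models\varphi$ and $\mathfrak{B}\subseteq\mathfrak{A}$, then for any $k$-tuple $\vec{b}$ drawn from $\mathfrak{B}$, the quantifier-free formula $\psi(\vec b)$ holds in $\mathfrak{A}$ and therefore already in $\mathfrak{B}$ (since atomic truth is inherited by substructures and $\psi$ is a Boolean combination of atomic formulas). Thus $\mathfrak{B}\models\varphi$. If in addition $\mathfrak{A}$ is finite, so is $\mathfrak{B}$, so the hypothesis that $\varphi$ axiomatizes $\mathbb{K}$ over finite structures places $\mathfrak{B}$ in $\mathbb{K}$.

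For part (ii), I would set $n := f(k)$ and argue by the dual direction of the same preservation principle. Assume every substructure of the finite $\mathfrak{A}$ of size at most $n$ belongs to $\mathbb{K}$. Pick an arbitrary tuple $\vec{a}=(a_{1},\dots,a_{k})\in\text{dom}(\mathfrak{A})^{k}$ and consider $\mathfrak{A}\langle\vec{a}\rangle$, the substructure generated by $\vec{a}$. By uniform local finiteness, $|\mathfrak{A}\langle\vec{a}\rangle|\leq f(k)=n$, and it is evidently a substructure of $\mathfrak{A}$, so by hypothesis $\mathfrak{A}\langle\vec{a}\rangle\in\mathbb{K}$ and hence $\mathfrak{A}\langle\vec{a}\rangle\models\varphi$. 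Specializing, $\mathfrak{A}\langle\vec{a}\rangle\models\psi(\vec{a})$, and because $\psi$ is quantifier-free this truth transfers up to $\mathfrak{A}\models\psi(\vec{a})$. Since $\vec{a}$ was arbitrary, $\mathfrak{A}\models\varphi$, and $\mathfrak{A}$ being finite we conclude $\mathfrak{A}\in\mathbb{K}$.

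There is not really a hard step here — the argument is the standard Vaught/Tarski package. The only points that need to be handled cleanly are: (a) being explicit that the axiomatization $\varphi$ is a \emph{single} universal sentence with a definite quantifier prefix length $k$ (which is why we can pick a uniform threshold $n=f(k)$); (b) carefully noting that the "over finite structures" qualifier is compatible with the argument because the substructures we invoke, $\mathfrak{B}\subseteq\mathfrak{A}$ and $\mathfrak{A}\langle\vec{a}\rangle$, are automatically finite when $\mathfrak{A}$ is; and (c) quoting the quantifier-free preservation fact in both directions (downward for (i), upward for (ii)).
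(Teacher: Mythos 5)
Your proof is correct and follows essentially the same route as the paper's: both set $n = f(k)$ where $k$ is the quantifier prefix length, pass to the substructure generated by a $k$-tuple, and use that quantifier-free formulas have the same truth value in a structure and its substructures (the paper merely phrases part (ii) contrapositively, starting from $\mathfrak{A}\not\in\mathbb{K}$ and extracting a small bad substructure). No gaps.
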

\begin{proof}
(i) is immediate. Let $\mathsf{Fin}$ the class of finite structures. For (ii), suppose $\mathbb{K}=\text{Mod}(\varphi)\cap \mathsf{Fin}$ for some $\varphi\in\mathcal{L}$ of the form $\varphi = \forall x_{1}\dots\forall x_{k} \psi(x_{1},...,x_{k})$, with $\psi$ quantifier-free. Since $\mathbb{K}$ is uniformaly locally finite, there is some $f:\mathbb{N}\to\mathbb{N}$ such that, for any $\mathfrak{A}\in\mathbb{K}$, any substructure of $\mathfrak{A}$ generated by $k$ elements has size at most $f(k)$. Take $n=f(k)$, and we show that this $n$ satisfies (ii). Take a finite structure $\mathfrak{A}\not\in\mathbb{K}$. We show $\mathfrak{A}$ contains a substructure of size at most $f(k)$ which is not in $\mathbb{K}$. We have $\mathfrak{A}\not\models \varphi$, so $\exists \bar{a}=(a_{1},...,a_{k})\in\mathfrak{A}$ with  $\mathfrak{A}\models \neg\psi[\bar{a}]$. Consider $\mathfrak{A}{\langle\bar{a}\rangle}$, the substructure of $\mathfrak{A}$ generated by $\{a_{1},...,a_{k}\}$. By definition of $f$, we have $|\mathfrak{A}{\langle\bar{a}\rangle}|\leq f(k)$. Since $\neg\psi$ is quantifier-free, it is preserved under substructures, so $\mathfrak{A}{\langle\bar{a}\rangle}\models\neg\psi[\bar{a}]$, hence  $\mathfrak{A}{\langle\bar{a}\rangle}\not\models \varphi$. So $\mathfrak{A}{\langle\bar{a}\rangle}\not\in\mathbb{K}$, which establishes (ii).  
\end{proof}

Note that the class $\mathsf{FCP}$ of finite Boolean algebras with a representable comparative probability order is locally finite, with the uniform bound on the size of substructures given by $f(n)=2^{2^n}$. We will use Proposition \ref{VaughtProp} to show that $\mathsf{FCP}$ is not axiomatizable over finite structures by a universal formula. 

\subsubsection{The logic of comparative probability is not finitely axiomatizable}

In order to show that $\mathsf{FCP}$ is not axiomatizable by a universal formula, we appeal to a combinatorial analysis of cancellation axioms in the setting of a restricted class of ordered Boolean algebras, which corresponds to one of the earliest comparative probability structures introduced by \cite{Finetti1937}.

\begin{definition}[de Finetti orders]
A \emph{de Finetti order} is a structure $(\mathcal{B},\succsim)$, where $\mathcal{B}$ is a Boolean algebra and $\succsim$ a binary relation on $\mathcal{B}$ satisfying the following: 
\begin{eqnarray*}
\mathsf{Tot}. & &  \succeq \text{ is a reflexive total order;}\\
\mathsf{NonDeg}. & &  \neg (\mb{0} \succeq \mb{1});\\
\mathsf{NonTriv}. & & \forall x (x\succsim \mb{0});\\
\mathsf{Quasi}. & & \forall x_1 x_{2} \Big( x_1\succsim x_{2} \leftrightarrow \big( x_1 \otimes x^{\bot}_{2} \succsim x_{2}\otimes  x^{\bot}_1 \big)\Big).
\end{eqnarray*}
A \emph{linear} de Finetti order is one where the strict binary relation $\succ$, defined by $a\succ b$ iff $\neg (b \succsim a)$, is a linear order. A de Finetti order on $n$ atoms is one where the underlying Boolean algebra $\mathcal{B}$ is a finite algebra generated by $n$ atoms.
\end{definition}

Note that the definition of de Finetti orders simply characterizes, in the language of ordered Boolean algebras, comparative orders satisfying the quasi-additivity axiom discussed in Section \ref{subsection: quasi-additivity}.\footnote{Recall that, in the plain set-theoretic language of comparative probability orders, quasi-additivity is equivalent to the statement that for any $A$, $B$ and $C$ such that $(A\cup B)\cap C=\varnothing$, we have $A\succsim B$ if and only if $A\cup C\succsim B\cup C$.} The notion of two sequences $(a_{1},\dots, a_{n})$ and $(b_{1},\dots, b_{n})$ of elements from $\mathcal{B}$ being \emph{balanced} transfers naturally to the setting of Boolean algebras.  Given $b\in\mathcal{B}$ and a $\mathcal{B}$-atom $c$, say that $b$ \emph{lies above} $c$ if $c\otimes b = c$. Now define $(a_{1},\dots, a_{n})$ and $(b_{1},\dots, b_{n})$ to be balanced if, for every atom $c$ in the algebra, the number of $a_{i}$'s above $c$ equals the number of $b_{i}$'s above $c$. We write $(a_{1},\dots, a_{n})\equiv_{0} (b_{1},\dots, b_{n})$ to express that the two sequences are balanced.\footnote{It follows from the formulation in balancedness \eqref{Balancedsequences} in Section \ref{subsection: quasi-additivity} that the statement $(a_{1},\dots, a_{n})\equiv_{0} (b_{1},\dots, b_{n})$ can in fact be expressed in $\mathcal{L}_{\mathsf{BA}}\cup\{\succsim\}$.} Consider the following property $\mathsf{S}_{k}$:
\begin{itemize}
    \item[($\mathsf{S}_{k}$)] If $(a_{i})_{i\leq N} \equiv_{0} (b_{i})_{i\leq N}$ are balanced sequences with \emph{at most $k$ distinct pairs} $(a_{i}, b_{i})$, then it is not the case that $a_{i}\succ b_{i}$ for all $i\leq N$.
\end{itemize}

\noindent ($\mathsf{S}_{k}$) is a variant of the finite cancellation axiom $\mathsf{FinCan}_{k}$. Note that $k$ here counts the number of \emph{distinct premises} $a_{i}\succ b_{i}$ in the antecedent, and not the number of premises. Observe also that a linear de Finetti order satisfying $S_k$ for \emph{all} $k\in\mathbb{N}$ also satisfies all instances of finite cancellation $\mathsf{FinCan}_{k}$. Thus, by Theorem \ref{SCOTTTHM}, linear de Finetti orders satisfying all axioms ($\mathsf{S}_{k}$) belong to $\mathsf{FCP}$: they are probabilistically representable. 

One can use the ($\mathsf{S}_{k}$) axioms to measure `\emph{how much}' finite cancellation an order needs to satisfy in order to be probabilistically  representable. Given a fixed bound on the size of a finite algebra (say, at most $n$ atoms), we can ask: is there some $k$ such that every de Finetti order of this size satisfying $S_k$ is representable? \cite{Fishburn1996, fishburn1997failure}  investigates such bounds. He defines: 
\begin{align*}
f(n):= \min\big\{k\in\mathbb{N}\,|\, & \text{every linear de Finetti order on $n$ atoms}  \\
& \text{that satisfies }  \mathsf{S}_{k} \text{ is representable} \big\}
\end{align*}
Known bounds on $f(n)$ are given by the following:
\begin{proposition}[\citealt{kraft1959intuitive,Fishburn1996}]\label{PropBounds}
For all $n$, $f(n)\leq n+1$.
\end{proposition}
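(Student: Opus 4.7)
The plan is to argue the contrapositive. Suppose $(\mathcal{B},\succsim)$ is a linear de Finetti order on $n$ atoms that is \emph{not} representable. Since, as noted just before the statement, any linear de Finetti order satisfying $\mathsf{S}_k$ for every $k\in\mathbb{N}$ is representable (via the infinitely many finite cancellation axioms and Theorem~\ref{SCOTTTHM}), some $\mathsf{S}_N$ must fail: there exist balanced sequences $(a_i)_{i\leq N}$, $(b_i)_{i\leq N}$ in $\mathcal{B}$ with $a_i\succ b_i$ for every $i\leq N$. If I can compress any such violation into one with at most $n+1$ distinct pairs, then $\mathsf{S}_{n+1}$ also fails, establishing the bound by contraposition.

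To compress, I would embed $\mathcal{B}$ into $\mathbb{Z}^n$ by identifying each element with its indicator vector on the $n$ atoms, and set $v_i := \mathbf{1}_{a_i} - \mathbf{1}_{b_i}\in\mathbb{Z}^n$. Balancedness then reads $\sum_{i\leq N} v_i = \mathbf{0}$. Grouping equal $v_i$'s, let $u_1,\dots,u_m$ enumerate the distinct values among the $v_i$ (all nonzero, since $a_i\succ b_i$ forces $a_i\neq b_i$), with positive integer multiplicities $c_1,\dots,c_m$, so that $\sum_j c_j u_j = \mathbf{0}$. Normalizing displays $\mathbf{0}$ as a convex combination of $u_1,\dots,u_m$ in $\mathbb{R}^n$; Carathéodory's theorem then yields a set $S\subseteq\{1,\dots,m\}$ with $|S|\leq n+1$ and strictly positive rationals $\mu_j$ ($j\in S$) summing to $1$ with $\sum_{j\in S}\mu_j u_j = \mathbf{0}$. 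Clearing denominators gives positive integers $d_j$ ($j\in S$) with $\sum_{j\in S} d_j u_j = \mathbf{0}$. For each $j\in S$, I pick some $i_j\leq N$ with $v_{i_j} = u_j$ and form new sequences listing $(a_{i_j},b_{i_j})$ exactly $d_j$ times. These new sequences are balanced by construction, involve at most $n+1$ distinct pairs, and satisfy $a_{i_j}\succ b_{i_j}$ throughout, contradicting $\mathsf{S}_{n+1}$.

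The main obstacle is the compression step. The bound $n+1$ is sharp for Carathéodory (the example $\{+\mathbf{e},-\mathbf{e}\}\subset\mathbb{R}^1$ shows that $n$ alone does not suffice), so this is exactly what the proposition calls for. The delicate part is simultaneously (i) keeping all chosen multiplicities strictly positive so every retained pair is genuinely present in the compressed sequences, (ii) clearing to integer coefficients so that the $d_j$'s serve as bona fide repetition counts in a finite sequence, and (iii) preserving strictness of each inherited comparison $a_{i_j}\succ b_{i_j}$ — the third being automatic, since each $u_j$ is witnessed by at least one of the original strict pairs. Everything else — the embedding into $\mathbb{Z}^n$, the reformulation of balance as a linear relation, and the translation between the $\mathsf{S}_k$ and $\mathsf{FinCan}_k$ formulations — is routine.
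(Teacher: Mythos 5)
The paper does not actually prove this proposition---it is imported from \cite{kraft1959intuitive} and \cite{Fishburn1996} by citation only---so there is no in-paper argument to compare against. Judged on its own terms, your proof is correct, and it is essentially the classical argument behind the cited bound. The contrapositive reduction is sound: a non-representable linear de Finetti order must violate some $\mathsf{S}_N$, since (as the paper notes just above the proposition) satisfying every $\mathsf{S}_k$ gives all of $\mathsf{FinCan}_k$ for \emph{linear} orders (ties force equality, so tied pairs contribute the zero difference vector and can be discarded), whence representability by Theorem~\ref{SCOTTTHM}. The compression step is also right: encoding the violation as $\sum_i v_i = \mathbf{0}$ with $v_i = \mathbbm{1}_{a_i} - \mathbbm{1}_{b_i} \in \{-1,0,1\}^n$, exhibiting $\mathbf{0}$ as a convex combination of the distinct nonzero difference vectors, and invoking Carath\'eodory in $\mathbb{R}^n$ yields a sub-family of at most $n+1$ vectors; choosing one witnessing strict pair per retained vector and repeating it with the cleared-denominator multiplicity produces a balanced, everywhere-strict violation of $\mathsf{S}_{n+1}$. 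Two small points you should make explicit rather than assert: (i) Carath\'eodory as usually stated gives real coefficients, so to clear denominators you should note that the representation polytope $\{\mu \ge 0 : \sum_{j\in S}\mu_j = 1,\ \sum_{j\in S}\mu_j u_j = \mathbf{0}\}$ is nonempty and cut out by rational data, hence has a rational (vertex) point---equivalently, take a basic feasible solution of the corresponding linear program; and (ii) distinct pairs may share a difference vector, but since you only ever \emph{select one representative pair per retained vector}, this can only decrease the count of distinct pairs, so the bound $|S|\le n+1$ still controls what $\mathsf{S}_{n+1}$ requires. Neither issue affects correctness.
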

\begin{proposition}[\citealt{fishburn1997failure}]\label{FISHBURN} 
For any $m\geq 6$, there exists a linear de Finetti order on a Boolean algebra with $m$ atoms that fails $\mathsf{S}_{m-1}$, but satisfies $\mathsf{S}_{m-2}$. 
\end{proposition}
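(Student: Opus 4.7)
The plan is to construct, for each $m\geq 6$, an explicit linear de Finetti order on a finite Boolean algebra with $m$ atoms that fails $\mathsf{S}_{m-1}$ while still satisfying $\mathsf{S}_{m-2}$. Since any de Finetti order on a finite algebra is a total, quasi-additive order, the construction amounts to specifying an assignment of weights to the atoms together with a tie-breaking rule that is compatible with disjoint union.

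I would proceed by combining a base case with an incremental extension. For the base case $m=6$, I would build on the classical Kraft--Pratt--Seidenberg non-representable five-atom construction by attaching a sixth atom with a carefully chosen weight: the six atoms receive rational weights $w_{1},\dots,w_{6}$ for which certain subset-sums coincide, and ties among these are broken by a secondary linear functional in a way that still preserves quasi-additivity (checked directly via $a\succsim b \leftrightarrow a\otimes b^\bot \succsim b\otimes a^\bot$). The resulting order is verified to satisfy $\mathsf{Tot}$, $\mathsf{NonDeg}$, $\mathsf{NonTriv}$, and $\mathsf{Quasi}$, and to violate $\mathsf{S}_{5}$ by exhibiting two explicit balanced sequences with exactly five distinct pairs $(a_{i},b_{i})$ and $a_{i}\succ b_{i}$ in each pair. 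For $m>6$, I would iteratively adjoin an atom $c_{m}$ whose weight is chosen either so dominant or so negligible that any subset sum involving $c_{m}$ is ordered in a canonical way; this forces any short balanced witness involving $c_{m}$ either to decompose into a witness already confined to the previously constructed atoms, or to use $c_{m}$ in a controlled fashion that contributes precisely one new distinct pair to the previous witness of length $m-2$.

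The main obstacle, and the bulk of the work, is to verify that $\mathsf{S}_{m-2}$ holds. This is a Farkas/Scott-lemma-type dual problem: one must show that no balanced pair $(a_{i})_{i\leq N}, (b_{i})_{i\leq N}$ with at most $m-2$ distinct pairs can satisfy $a_{i}\succ b_{i}$ for all $i$ under the constructed order. The strategy is to treat the balance condition $\sum_{i}(\ind{a_i}-\ind{b_i})=0$ together with the strict inequalities as a linear feasibility problem in $\mathbb{R}^{m}$: a minimal putative violator would yield a positive integer combination of difference vectors that simultaneously sums to zero and strictly dominates zero under the constructed weight, a contradiction. By a careful case analysis on which of the auxiliary atoms $c_{7},\dots,c_{m}$ appear in any alleged short witness, the question reduces to verifying representability of a subproblem on a small number of atoms, which can be settled by direct enumeration (possibly computer-assisted) for the five-atom core. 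The delicate calibration of the auxiliary atom weights is where most of the technical difficulty lies: the weights must be chosen so that no unintended short cancellation failure arises through interactions between these atoms and the original non-representable core.
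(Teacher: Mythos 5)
The paper itself offers no proof of this proposition: it is imported verbatim from Fishburn (1997), so your task was in effect to reconstruct Fishburn's construction, and your plan has a fatal structural flaw at its core. You propose to realize the order by rational atom weights with ties ``broken by a secondary linear functional.'' But any total order obtained lexicographically from finitely many additive functionals satisfies \emph{every} cancellation condition $\mathsf{S}_k$: if $(a_i)_{i\le N}$ and $(b_i)_{i\le N}$ are balanced, then each additive functional sums to the same value over the two sequences; so $a_i\succ b_i$ for all $i$ forces the primary weights to agree termwise, whence the secondary functional must be strictly larger termwise, contradicting its own balance. Such an order is therefore representable (cf.\ Theorem~\ref{SCOTTTHM}) and in particular cannot fail $\mathsf{S}_{m-1}$. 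The entire content of Fishburn's theorem is that the tie-breaking must be \emph{non-additive} while the resulting linear order still satisfies $\mathsf{Quasi}$ and exactly the cancellation conditions up to $\mathsf{S}_{m-2}$; none of that combinatorial work appears in your sketch, and phrases such as ``delicate calibration'' and ``possibly computer-assisted'' defer precisely the substance of the result. Your Farkas-style verification of $\mathsf{S}_{m-2}$ has the same defect: it tacitly assumes the strict inequalities are witnessed by the weights, which is exactly what fails for a non-representable order.

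The inductive step is also unsound. Your induction hypothesis gives an $(m-1)$-atom structure that fails $\mathsf{S}_{m-2}$; if the new atom $c_m$ is chosen so dominant or negligible that events built from the old atoms retain their previous order, then the old balanced witness with $m-2$ distinct strict pairs survives unchanged in the $m$-atom algebra, so the extended order \emph{still fails} $\mathsf{S}_{m-2}$ --- whereas the proposition requires it to satisfy $\mathsf{S}_{m-2}$. Violations of low-index cancellation conditions are inherited by any conservative extension, so no choice of weight for $c_m$ can repair this; each $m$ genuinely requires a construction built from scratch, which is how Fishburn proceeds. The same persistence problem already undermines your base case: conservatively attaching a sixth atom to the Kraft--Pratt--Seidenberg five-atom example preserves its violation of $\mathsf{S}_4=\mathsf{S}_{m-2}$. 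The statement itself is correct, but the proposal as written does not establish it.
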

We now use these bounds to prove our desired result.
\begin{proposition}
The class $\mathsf{FCP}$ is not axiomatizable by a universal sentence over finite structures.
\end{proposition}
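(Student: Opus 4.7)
The plan is to argue by contradiction using Proposition~\ref{VaughtProp}. Suppose that $\mathsf{FCP}$ were axiomatized over finite structures by a universal sentence. Since $\mathsf{FCP}$ is uniformly locally finite—any Boolean subalgebra generated by $k$ elements has at most $2^{2^k}$ elements—Proposition~\ref{VaughtProp}(ii) furnishes some $N \in \mathbb{N}$ such that every finite $\mathcal{L}_{\mathsf{BA}} \cup \{\succsim\}$-structure $\mathfrak{A}$, all of whose substructures of size at most $N$ lie in $\mathsf{FCP}$, itself lies in $\mathsf{FCP}$. To obtain a contradiction I would exhibit, for this fixed $N$, a single finite counterexample to this closure property.

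Set $m := \max(6, \lceil \log_2 N \rceil + 3)$, and let $\mathfrak{A}_m$ be the linear de Finetti order on $m$ atoms supplied by Proposition~\ref{FISHBURN}: it fails $\mathsf{S}_{m-1}$ while satisfying $\mathsf{S}_{m-2}$. The failure of $\mathsf{S}_{m-1}$ produces balanced sequences all of whose inequalities $a_i \succ b_i$ are strict, which directly violates an instance of finite cancellation; hence by Theorem~\ref{SCOTTTHM}, $\mathfrak{A}_m$ is not probabilistically representable, so $\mathfrak{A}_m \notin \mathsf{FCP}$.

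It remains to verify that every substructure $\mathfrak{B} \subseteq \mathfrak{A}_m$ with $|\mathfrak{B}| \leq N$ nevertheless belongs to $\mathsf{FCP}$. Such a $\mathfrak{B}$ is a Boolean subalgebra of $\mathfrak{A}_m$ with the inherited comparative order; since the defining axioms of a linear de Finetti order are all universal (or involve only the constants $\mathbf{0}, \mathbf{1}$), they transfer down to $\mathfrak{B}$. The property $\mathsf{S}_k$ is likewise universal in form (it forbids the existence of certain balanced sequences with strict inequalities), so $\mathfrak{A}_m \models \mathsf{S}_{m-2}$ entails $\mathfrak{B} \models \mathsf{S}_{m-2}$. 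Being a finite Boolean algebra of size at most $N$, $\mathfrak{B}$ has at most $\lfloor \log_2 N \rfloor$ atoms. The choice of $m$ ensures $m - 2 \geq \lfloor \log_2 N \rfloor + 1$; and since $\mathsf{S}_j$ implies $\mathsf{S}_{j'}$ whenever $j \geq j'$ (a counterexample for $\mathsf{S}_{j'}$ is automatically a counterexample for $\mathsf{S}_j$), it follows that $\mathfrak{B} \models \mathsf{S}_{\lfloor \log_2 N \rfloor + 1}$. Proposition~\ref{PropBounds} then yields that $\mathfrak{B}$ is representable, i.e.\ $\mathfrak{B} \in \mathsf{FCP}$.

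Thus $\mathfrak{A}_m$ witnesses the failure of Proposition~\ref{VaughtProp}(ii) for the bound $N$, contradicting the assumed finite axiomatizability. The principal technical point is the calibration of $m$: it must be large enough that $\mathfrak{A}_m$ itself fails representability (via Proposition~\ref{FISHBURN}), yet so large that every $\leq N$-element subalgebra, which can contribute at most $\lfloor \log_2 N \rfloor$ atoms, still satisfies enough of the Fishburn cancellation property $\mathsf{S}_{m-2}$ to fall inside the representability threshold $f(n) \leq n+1$ of Proposition~\ref{PropBounds}. The inequality $m - 2 \geq \lfloor \log_2 N \rfloor + 1$ is exactly where these two requirements meet.
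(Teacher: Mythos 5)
Your proof is correct and follows essentially the same route as the paper's: it invokes Proposition~\ref{VaughtProp}(ii), takes the Fishburn structure from Proposition~\ref{FISHBURN} with $m$ on the order of $\log_2 N + 3$, notes that failing $\mathsf{S}_{m-1}$ blocks representability while the universal form of the de Finetti axioms and of $\mathsf{S}_{m-2}$ passes to subalgebras, and closes with the bound $f(n)\leq n+1$ of Proposition~\ref{PropBounds}. The only differences are presentational (an explicit contradiction with a fixed $N$, and counting atoms via $\lfloor \log_2 N\rfloor$ rather than $m-3$), so the argument matches the paper's.
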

\begin{proof}
We show that condition (ii) of Proposition \ref{VaughtProp} fails for $\mathsf{FCP}$. That is, we show that for any $n\in \mathbb{N}$, there exists some finite structure $\mathfrak{A} = (\mathcal{A},\succsim)\not\in\mathsf{FCP}$ such that every one of its substructures $\mathfrak{B}=(\mathcal{B},\succsim\restriction  \mathcal{B})$ of size at most $n$ is in $\mathsf{FCP}$. Given sufficiently large $n$, take a linear de Finetti order $\mathfrak{A}$ as given by Proposition \ref{FISHBURN} with $m\geq \log_{2}n + 3$. Then $\mathfrak{A}$ is not representable, as it fails $\mathsf{S}_{m-1}$. Now take any of its substructures $(\mathcal{B},\succsim\restriction  \mathcal{B})$ of size at most $n \leq 2^{m-3}$. Such an algebra has at most $m-3$ atoms, and it is evidently a linear de Finetti order (note that linear de Finetti orders are axiomatizable by a universal sentence, hence preserved under substructures). Since $(\mathcal{A},\succsim)$ satisfies $\mathsf{S}_{i}$ for all $i\leq (m-2)$, so does the substructure $(\mathcal{B},\succsim\restriction \mathcal{B})$: for any violation of $\mathsf{S}_{i}$ in $(\mathcal{B},\succsim\restriction \mathcal{B})$ would also hold in $(\mathcal{A},\succsim)$.\footnote{Each inequality $a_{i} \succ b_{i}$ is obviously preserved under substructures; so whether any instance of $\mathsf{S}_{i}$ holds only depends on whether all the elements $a_{i}$, $b_{i}$ involved are indeed in the subalgebra.} But now $\mathcal{B}$ is an algebra on at most $m-3$ atoms, and by Proposition \ref{PropBounds} $f(m-3)\leq m-2$. This means that any linear De Finetti order on $m-3$ atoms that satisfies $\mathsf{S}_{m-2}$ is representable, since it then also satisfies $\mathsf{S}_{f(m-3)}$. So any $\mathfrak{B}\subset \mathfrak{A}$ of size at most $n$ is representable.
\end{proof}
From this we can conclude: 
\begin{theorem}
$\mathcal{L}_{\text{comp}}$ is not finitely axiomatizable over $\mathsf{AX}_{\text{base}}$.
\end{theorem}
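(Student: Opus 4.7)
The plan is to derive the Theorem as a straightforward consequence of the preceding Proposition, using the translation machinery $\widehat{\cdot}$ from $\mathcal{L}_{\text{comp}}$ into $\mathcal{L}_{\mathsf{BA}}\cup\{\succsim\}$ that has already been set up. I would argue by contraposition: assuming a finite scheme-axiomatization of $\mathcal{L}_{\text{comp}}$ over $\mathsf{AX}_{\text{base}}$ by $\Delta=\{\psi_{1},\dots,\psi_{k}\}$, I would exhibit a single universal sentence $\Phi$ in $\mathcal{L}_{\mathsf{BA}}\cup\{\succsim\}$ whose finite models are exactly the structures in $\mathsf{FCP}$, contradicting the Proposition.

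First step: translate each scheme via $\widehat{\cdot}$ to obtain a finite set $\widehat{\Delta}=\{\widehat{\psi}_{1},\dots,\widehat{\psi}_{k}\}$ of universal sentences. Combining $\widehat{\Delta}$ with the (finitely many) standard universal axioms for Boolean algebras, together with the universal translations of the $\mathsf{AX}_{\text{base}}$ axioms, yields a finite universal theory $T\subseteq\mathcal{L}_{\mathsf{BA}}\cup\{\succsim\}$. I would then verify that the finite models of $T$ coincide with $\mathsf{FCP}$. One inclusion, $\mathsf{FCP}\subseteq\mathrm{Mod}(T)\cap\mathsf{Fin}$, is by soundness: each axiom in $\Delta$ is an $\mathcal{L}_{\text{comp}}$-validity, hence its translation holds universally in every structure of $\mathsf{FCP}$. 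For the converse, I would argue that every scheme $\mathsf{FinCan}_{n}$ is an $\mathcal{L}_{\text{comp}}$-validity (Theorem~\ref{SCOTTTHM}) and therefore, by the assumed completeness of $\Delta$, derivable in $\mathsf{AX}_{\text{base}}+\Delta$; transferring this derivation through $\widehat{\cdot}$ shows $T\models\widehat{\mathsf{FinCan}}_{n}$ for every $n$. Combined with the fact that the infinite system $\{\widehat{\mathsf{FinCan}}_{n}\}_{n\in\mathbb{N}}$ together with the universal Boolean-algebra and $\widehat{\mathsf{AX}}_{\text{base}}$ axioms axiomatizes $\mathsf{FCP}$ over finite structures (a direct rephrasing of Theorem~\ref{SCOTTTHM}), this yields $\mathrm{Mod}(T)\cap\mathsf{Fin}\subseteq\mathsf{FCP}$.

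Second step: collapse $T$ into a single universal sentence. Since universal quantifiers distribute over conjunction, the finite conjunction $\bigwedge_{\varphi\in T}\varphi$ is logically equivalent (after renaming bound variables) to a sentence of the form $\forall x_{1}\dots\forall x_{m}\,\chi(x_{1},\dots,x_{m})$ with $\chi$ quantifier-free. This gives a single universal sentence $\Phi$ with $\mathrm{Mod}(\Phi)\cap\mathsf{Fin}=\mathsf{FCP}$, contradicting the preceding Proposition and completing the argument.

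The main obstacle I anticipate is bookkeeping rather than deep: it lies in justifying that the translation $\widehat{\cdot}$ correctly transports the schematic-derivation structure on the logical side into first-order consequence on the algebraic side — in particular, that the assumed completeness of $\Delta$ for $\mathcal{L}_{\text{comp}}$-validity really does force $T\models\widehat{\mathsf{FinCan}}_{n}$ for every $n$. This is implicit in the framing already given in the excerpt, so the proof itself should be essentially a few lines once this correspondence is noted.
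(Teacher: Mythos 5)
Your proposal is correct and follows essentially the same route as the paper: the paper likewise reduces the theorem to the Proposition that $\mathsf{FCP}$ is not axiomatizable by a universal sentence over finite structures, by translating a hypothetical finite scheme-axiomatization $\Delta$ via $\widehat{\cdot}$, noting that $\widehat{\Delta}$ together with the Boolean-algebra and $\widehat{\mathsf{AX}}_{\text{base}}$ axioms would finitely axiomatize the universal theory of $\mathsf{FCP}$ over finite structures (using that the $\widehat{\mathsf{FinCan}}_n$ already do so), and then conjoining into a single universal sentence. Your added bookkeeping about the two inclusions and the collapse of a finite universal theory into one universal sentence is exactly what the paper leaves implicit.
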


\subsection{Complexity of additive systems}\label{section: additive complexity}

In this section, we recall well-known results that characterize the complexity of reasoning in the additive systems $\mathcal{L}_{\text{comp}}$ and $\mathcal{L}_{\text{add}}$.

\begin{theorem}\label{additive-complexity}
$\mathsf{SAT}_{\text{comp}}$ and $\mathsf{SAT}_{\text{add}}$ are $\mathsf{NP}$-complete.
\end{theorem}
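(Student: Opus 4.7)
The plan is to establish $\mathsf{NP}$-hardness and $\mathsf{NP}$-membership separately, with the latter being the more substantive direction. Since every formula of $\mathcal{L}_{\text{comp}}$ is syntactically already a formula of $\mathcal{L}_{\text{add}}$, it suffices to show $\mathsf{NP}$-hardness for $\mathcal{L}_{\text{comp}}$ and $\mathsf{NP}$-membership for $\mathcal{L}_{\text{add}}$.

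For $\mathsf{NP}$-hardness, I would give a direct polynomial-time reduction from Boolean $\mathsf{SAT}$: given a propositional formula $\alpha$, map it to the $\mathcal{L}_{\text{comp}}$ formula $\mathbf{P}(\alpha) \succ \mathbf{P}(\bot)$. By $\mathsf{Dist}$ this is satisfiable in a probability model exactly when $\alpha$ has a satisfying truth assignment (the assignment yields a state description to which one assigns probability $1$). This reduction is evidently computable in polynomial time.

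For $\mathsf{NP}$-membership, I would follow the standard Fagin--Halpern--Megiddo strategy. Given $\varphi \in \mathcal{L}_{\text{add}}$, first nondeterministically guess a disjunct $\chi$ of the disjunctive normal form of $\varphi$; this is polynomial in $|\varphi|$, and $\varphi$ is satisfiable iff some $\chi$ is. Each such $\chi$ is a conjunction of weak and strict inequalities between sums of probability terms $\mathbf{P}(\delta)$ for state descriptions $\delta \in \Delta_\varphi$. By $\mathsf{Add}$ and $\mathsf{Dist}$, we may equivalently view $\chi$ as a system of linear (in)equalities in the $|\Delta_\varphi| = 2^n$ nonnegative real variables $x_\delta := \mathbf{P}(\delta)$, augmented with $\sum_\delta x_\delta = 1$. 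The issue is that this system has exponentially many variables.

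The key step is a Carath\'eodory-style argument: if the linear system has any solution in the standard simplex, then it has one whose support has cardinality at most $|\chi| + 1$, i.e., polynomial in $|\varphi|$. To exploit this, the nondeterministic algorithm additionally guesses a polynomial-size subset $S \subseteq \Delta_\varphi$ of candidate ``active'' state descriptions, sets $x_\delta = 0$ for $\delta \notin S$, and then verifies in deterministic polynomial time (via linear programming, which is in $\mathsf{P}$) that the resulting polynomially sized linear program over the variables $\{x_\delta\}_{\delta \in S}$ is feasible. Since coefficients of sums in $\varphi$ are bounded by $|\varphi|$ in unary--or at most exponentially in binary, which is still handled by polynomial-time LP since bit-lengths remain polynomial--this verification is feasible in time polynomial in $|\varphi|$.

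The main obstacle is the bookkeeping to ensure strict inequalities are preserved and that the polynomial bound on support size really holds; this is where one appeals either to Carath\'eodory's theorem applied to the cone of feasible solutions, or equivalently to the fact that a feasible basic solution of a linear program with $k$ constraints has at most $k$ nonzero coordinates. Neither the $\mathsf{NP}$-hardness reduction nor the linear-programming verification introduces any difficulty beyond what is already standard in the literature, so I would cite \cite{fagin1990logic} for the detailed accounting and present the proof as a sketch organized around the three steps: guess a disjunct, guess a support, solve an LP.
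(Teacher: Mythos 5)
Your proof is correct and follows essentially the same route as the paper: $\mathsf{NP}$-hardness via the reduction $\alpha \mapsto \mathbf{P}(\alpha)\succ\mathsf{0}$ from Boolean satisfiability, and $\mathsf{NP}$-membership via the small-support property of basic feasible solutions of the induced linear program, following Fagin--Halpern--Megiddo. The only (immaterial) difference is that the paper's certificate is the explicit small model together with its rational values, whose bit-size it bounds via Cramer's rule, whereas you guess only the support set and delegate the numerical work to a polynomial-time linear-programming feasibility check.
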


We rehearse a proof by Fagin, Halpern, and Megiddo \citeyearpar{fagin1990logic} to show that $\mathsf{SAT}_{\mathrm{add}}$ is $\mathsf{NP}$-complete. This requires two lemmas:

\begin{lemma}\label{linear-small-model}
If there exists a non-negative solution to a system of $m$ linear inequalities with integer coefficients each of length at most $\ell$, then the system has a non-negative solution with at most $m$ nonzero entries, and where the size of each entry is $O(m \ell + m \log (m))$.
\end{lemma}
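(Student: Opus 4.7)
The plan is a two-step classical argument: reduce to a basic feasible solution (for the sparsity bound), then bound the coordinates via Cramer's rule together with Hadamard's inequality (for the bit-size bound).

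First I would convert the $m$ inequalities to equalities by adjoining a non-negative slack variable to each, yielding a system $A\mathbf{x}' = \mathbf{b}$, $\mathbf{x}' \geq 0$, where $A$ is an $m \times N$ integer matrix whose entries have bit-length $O(\ell)$ (the slack columns contribute only $\pm 1$ entries). Feasibility transfers in both directions, and the number of nonzero entries in the restriction to the original variables is at most the number of nonzero entries in $\mathbf{x}'$. Among all non-negative solutions I would select one $\mathbf{x}'$ with minimum support $S$. A standard perturbation argument then shows that the columns $\{A_{\cdot j}\}_{j \in S}$ must be linearly independent: otherwise, a nontrivial linear dependence provides a direction along which one can shift $\mathbf{x}'$, scaled so that some coordinate in $S$ first drops to zero, producing a feasible solution with strictly smaller support. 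Linear independence of $|S|$ vectors in $\mathbb{R}^m$ forces $|S| \leq m$.

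Second, I would restrict the system to columns indexed by $S$. Passing to a maximal linearly independent subset of rows yields a square invertible $k \times k$ subsystem with $k = |S| \leq m$, whose unique solution gives the nonzero coordinates $x'_j$ for $j \in S$. By Cramer's rule, each $x'_j = \det(A_S^{(j)}) / \det(A_S)$, a ratio of integer determinants of matrices of dimension at most $m$ whose entries have bit-length $O(\ell)$. Hadamard's inequality bounds each such determinant in absolute value by $m^{m/2} \cdot 2^{m\ell}$, a number of bit-length $O(m\ell + m \log m)$. Hence each $x'_j$, written as a reduced fraction, has both numerator and denominator of this bit-length, yielding the stated bound.

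The main obstacle I anticipate is purely bookkeeping: confirming that slack-variable introduction does not asymptotically inflate the size parameter $\ell$, and that the maximal-row-subset step genuinely preserves the solution set on $S$ (it does, because the discarded rows are linear combinations of the retained ones and so are automatically satisfied). The two conceptual ingredients---the existence of sparse extreme-point solutions and the Hadamard bound on integer determinants---are both standard, so the proof essentially assembles known pieces into the quantitative form required.
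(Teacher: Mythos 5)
Your argument is essentially the paper's: reduce to a sparse (basic) solution of an equality system obtained via slack variables, then bound the nonzero coordinates by Cramer's rule and a determinant estimate. The two places where you deviate are harmless substitutions of standard tools: you get sparsity from a minimal-support perturbation argument where the paper invokes optimality of a simplex vertex, and you bound the determinants by Hadamard's inequality where the paper expands over permutations and sums pairwise; both yield the same $O(m\ell + m\log m)$ bit-length.

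There is, however, one genuine gap: you do not handle \emph{strict} inequalities, and the lemma is applied in the paper to systems containing them (they arise from the $\succ$ literals in Lemma~\ref{additive-small-model}). For a constraint $\sum_j a_{i,j}x_j < b_i$, the slack reformulation $\sum_j a_{i,j}x_j + x_{n+i} = b_i$ with $x_{n+i} \geq 0$ is \emph{not} equivalent to the original --- feasibility does not transfer back, since a solution of the equality system may set $x_{n+i}=0$. Worse, your minimal-support step actively drives some coordinate of the support to zero, and nothing prevents that coordinate from being the slack of a strict inequality, in which case the resulting sparse solution violates the original system. The paper avoids this by introducing an additional variable $x_0$, requiring every strict-inequality slack to satisfy $x_{n+i} \geq x_0$, and maximizing $x_0$: optimality of the basic solution then forces $x_0^* > 0$, so all strict inequalities remain strictly satisfied at the sparse vertex. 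Your proof needs an analogous device (or a restriction of the minimal-support argument that forbids zeroing the strict slacks, which requires a separate justification since the perturbation direction may not cooperate); as written, it only establishes the lemma for systems of non-strict inequalities.
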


\begin{proof}

We begin by transforming the system of linear inequalities into a linear program. Let $x_1,...,x_n$ denote the variables appearing in the system. For each non-strict inequality constraint $\sum_{j} a_{i,j} x_j \leq b_j$, one can introduce a slack variable $x_{n+j}$ and define the equality constraint $\sum_{j} a_{i,j} x_j + x_{n+j} = b_j$. Similarly, for a strict inequality constraint $\sum_{j} a_{i,j} x_j < b_j$, one can introduce $x_{n+j}$ and write $\sum_{j} a_{i,j} x_j + x_{n+j} = b_j$, adding to this the constraint that $x_{n+j} \geq x_0$. This gives rise to system of linear constraints $\textbf{A}\textbf{x} = \textbf{b}$ in the variables $x_0,....,x_{n+m}$, which can be placed in the following linear program:
\begin{align*}
    \text{maximize } &x_0\\
    \text{subject to } &\textbf{A}\textbf{x} = \textbf{b}, \textbf{x} \geq 0
\end{align*}
Because the original system has a non-negative solution, the above linear program has a solution for which the objective function $x_0$ is positive. It is well-known (see, for example, Ch. 8 of \citealt{chvatal1983linear}) that the simplex algorithm, which traverses the vertices of a convex polytope associated with the system of inequalities $\textbf{A}\textbf{x} \leq \textbf{b}$, will discover an optimal, non-negative solution $\textbf{x}^*$ to the system $\textbf{A}\textbf{x} = \textbf{b}$, and in this case it follows from optimality that $x_0^* >0$. Thus $x_1^*,...,x_n^*$ provide a non-negative solution to the original system of inequalities. The simplex algorithm explores solutions to the linear program which lie at vertices by successively setting $n - m$ variables to 0 and setting the remaining $m$ variables so as to satisfy the linear constraints, so the solution $\textbf{x}^*$ has at most $m$ variables positive.

Following the presentation in \cite{Williamson2008}, we now bound the size of the non-zero entries of $\textbf{x}^*$. We observe that deleting the zero entries of $\textbf{x}^*$ and the corresponding rows of $\textbf{b}$ and columns of $\textbf{A}$ gives vectors $\bar{\textbf{x}}$ and $\bar{\textbf{b}}$ and a matrix $\bar{\textbf{A}}$ such that $\bar{\textbf{A}} \bar{\textbf{x}} = \bar{\textbf{b}}.$ By Cramer's rule,
\[
\bar{x}_j = \frac{\text{det}(\bar{\textbf{A}}_j)}{\text{det}(\bar{\textbf{A}})},
\]
where $\bar{\textbf{A}}_j$ is the result of replacing the $j^{th}$ column of $\bar{\textbf{A}}$ with $\bar{\textbf{b}}$. It suffices to show that one can express $\text{det}(\bar{\textbf{A}})$ using at most $O(m \ell + m \log (m))$ bits. Recall that
\[
\text{det}(\bar{\textbf{A}}) = \sum_{\sigma \in S_m} (-1)^{N(\sigma)} a_{1 \sigma(1)} \cdot\cdot\cdot a_{m \sigma(m)},
\]
where $S_m$ is the set of permutations of $[m] = \{1,...,m\}$ and \[N(\sigma) = \{i, j \in [m] : i < j \text{ and } \sigma(i) > \sigma(j)\}\] is the set of indices inverted by $\sigma$.

Each entry $a_{i \sigma(i)}$ of $\bar{\textbf{A}}$ has size at most $\ell$. Thus each term in the above sum has size at most $m \cdot \ell$. Noting that $|S_m| = m!$, relabel the terms in the above sum and define $y_i$ such that
\[
\text{det}(\bar{\textbf{A}}) = \sum_{ i \in [m!]} y_i.
\]
Group the $y_i$ in pairs arbitrarily and sum them to produce a sequence $y_i^\prime$ which is half as long:
\[
\text{det}(\bar{\textbf{A}}) = \sum_{ i \in [m! /2]} y_i^\prime.
\]
The size of each sum $y_i^\prime$ is at most one greater than that of its summands. Repeating this process $\log (m!) \leq \log (m^m) = m \cdot \log (m)$ times produces a single number of size at most $m \cdot \ell + m \cdot \log (m)$.
\end{proof}

\begin{definition}
Let $|\varphi|$ denote the number of symbols required to write $\varphi$, and let $||\varphi||$ denote the length of the longest rational coefficient of $\varphi$, written in binary.
\end{definition}

\begin{lemma}\label{additive-small-model}
Suppose $\varphi \in \mathcal{L}_{\text{add}}$ is satisfiable. Then $\varphi$ has a model which assigns positive probability to at most $|\varphi|$ events $\delta \in \Delta_\varphi$, where the probability assigned to each such $\delta$ is a rational number with size $O(|\varphi| \cdot ||\varphi|| + |\varphi|\cdot \log(|\varphi|) )$.
\end{lemma}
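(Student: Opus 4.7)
The plan is to reduce satisfiability of $\varphi \in \mathcal{L}_{\text{add}}$ to feasibility of a system of linear inequalities and then invoke the preceding small-solution lemma. First I would put $\varphi$ into disjunctive normal form and fix a satisfiable disjunct $\varphi'$, which is a conjunction of atomic comparisons $\mathsf{a}\succsim\mathsf{b}$ and negations thereof; each negation rewrites as a strict inequality $\mathsf{b}\succ\mathsf{a}$. The number of conjuncts is at most $|\varphi|$, and satisfiability of $\varphi'$ entails satisfiability of $\varphi$.

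Next, for each complete state description $\delta \in \Delta_\varphi$, introduce a variable $x_\delta$ intended to denote $\mathbf{P}(\delta)$. Each term $\mathbf{P}(\alpha)$ occurring in $\varphi'$ expands as $\sum_{\delta \models \alpha} x_\delta$, so after distributing sums, each side of every conjunct of $\varphi'$ becomes a linear form $\sum_\delta c_\delta\, x_\delta$ whose non-negative integer coefficients $c_\delta$ are bounded by the number of probability terms appearing on that side, hence by $|\varphi|$. Adjoining the normalization $\sum_\delta x_\delta = 1$ (as two inequalities) yields a linear system $\Sigma$ with $m = O(|\varphi|)$ inequalities in at most $|\Delta_\varphi|$ variables, whose integer coefficients have bit-length $\ell = O(\|\varphi\| + \log|\varphi|)$. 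Any non-negative solution of $\Sigma$ defines a probability measure on the algebra generated by the atoms of $\varphi$ by setting $\mathbb{P}(\delta) := x_\delta$, and by construction this measure satisfies $\varphi'$ and thus $\varphi$. Since $\varphi$ is satisfiable, such a non-negative solution exists.

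Applying the preceding small-solution lemma to $\Sigma$ then produces a non-negative solution with at most $m = O(|\varphi|)$ nonzero entries, each rational of size $O(m\ell + m\log m) = O(|\varphi|\cdot\|\varphi\| + |\varphi|\log|\varphi|)$, which yields the required model. The only non-mechanical steps are the bookkeeping: verifying that coefficients truly stay bounded by $|\varphi|$ after one expands repeated additions into $\sum_{\delta\models\alpha} x_\delta$, and checking that strict inequalities are handled as in the slack-variable trick used inside the preceding lemma's proof. Neither is a serious obstacle, and the result should fall out almost immediately from that lemma.
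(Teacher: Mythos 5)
Your proposal is correct and follows essentially the same route as the paper's proof: pass to a satisfiable disjunct of the disjunctive normal form, expand each probability term as a sum over the state descriptions in $\Delta_\varphi$, adjoin the normalization $\sum_{\delta} \mathbf{P}(\delta) = 1$, and apply the small-solution lemma for linear systems. The extra bookkeeping you flag (coefficient bounds and the treatment of strict inequalities via slack variables) is handled the same way in the paper, which leaves it implicit.
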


\begin{proof}
Since $\varphi$ is satisfiable, it has a model $\mathfrak{M}$ which is also a model of some disjunct $\psi$ appearing in the disjunctive normal form of $\varphi$. Pushing all sums in $\psi$ to one side, we observe that $\psi$ is a conjunction of formulas of the form
\[
\sum_{i} \mathbf{P}(\epsilon_i) \succsim b \,\,\,\text{ or }\,\,\, \sum_{i} \mathbf{P}(\epsilon_i) \succ b.
\]
Let $\psi(\Delta_\varphi)$ be the result of replacing each instance of $ \mathbf{P}(\epsilon_i)$ in $\psi$ with the sum $$\sum_{\substack{\delta \in \Delta_\varphi\\ \models\delta \rightarrow \epsilon_i}}  \mathbf{P}(\delta)$$ and adding the constraint $\sum_{\delta \in \Delta_\varphi} \mathbf{P}(\delta) = 1$. Then a model of $\psi(\Delta_\varphi)$ is a model of $\psi$ and so a model of $\varphi$, and the formula $\psi(\Delta_\varphi)$ simply describes a system of at most $|\varphi|$ linear inequalities, so the result follows immediately from Lemma~\ref{linear-small-model}.
\end{proof}

\begin{proof}[Proof of Theorem~\ref{additive-complexity}]
Since $\mathcal{L}_{\text{comp}} \subseteq \mathcal{L}_{\text{add}} $, it follows that $\mathsf{SAT}_{\text{comp}} \leq \mathsf{SAT}_{\text{add}}$. It thus suffices to show that $\mathsf{SAT}_{\text{comp}}$ is $\mathsf{NP}$-hard and that $\mathsf{SAT}_{\text{add}}$ is in $\mathsf{NP}$.

The Cook-Levin Theorem \cite{cook1971complexity} states that satisfiability for Boolean formulas $\alpha$ is $\mathsf{NP}$-complete. This is equivalent to the problem of deciding whether $\mathbf{P}(\alpha) \succ \mathsf{0}$ is satisfiable, which is an instance of the problem $\mathsf{SAT}_{\text{comp}}$, showing that the latter is $\mathsf{NP}$-hard.

Consider now the task of determining whether $\varphi \in \mathcal{L}_{\text{add}}$ is satisfiable. Using Lemma~\ref{additive-small-model}, we request a small model as a certificate and confirm that it satisfies $\varphi$. 
\end{proof}

\section{Multiplicative Systems}\label{section: Multiplicative Systems}

\subsection{Polynomial probability calculus}\label{section: polynomial probability calculus}
The paradigmatic multiplicative language is $\mathcal{L}_{\textnormal{poly}}$ (Definition~\ref{def:multlang}), which adds a binary multiplication operator (denoted $\cdot$). Encompassing all comparisons between polynomial functions of probabilities, this language is sufficiently rich to express, e.g., conditional probability and (in)dependence.
The system $\mathsf{AX}_{\text{poly}}$ (Figure~\ref{fig:poly}) annexes axioms capturing multiplication and its interaction with addition to $\mathsf{AX}_{\text{add}}$, and our principal result here is its completeness.


\begin{figure}[t]
\begin{mdframed} \[\underline{\textbf{The Axioms of }\mathsf{AX}_{\textnormal{poly}}}\] \vspace{-.3in} 
\begin{eqnarray*}
\mathsf{Assoc}. & & \mathsf{a} \cdot (\mathsf{b}\cdot\mathsf{c}) \approx (\mathsf{a}\cdot\mathsf{b})\cdot\mathsf{c} \\
\mathsf{Comm}. & & \mathsf{a}\cdot\mathsf{b} \approx \mathsf{b}\cdot\mathsf{a} \\ 
\mathsf{Zero}. & & \mathsf{a} \cdot \mathsf{0} \approx \mathsf{0} \\
\mathsf{One}. & & \mathsf{a} \cdot \mathsf{1} \approx \mathsf{a} \\
\mathsf{Canc}. & & \mathsf{c} \succ \mathsf{0} \rightarrow (\mathsf{a} \cdot \mathsf{c} \succsim \mathsf{b} \cdot \mathsf{c} \leftrightarrow \mathsf{a} \succsim \mathsf{b})\\
\mathsf{Dist}. & & \mathsf{a}\cdot(\mathsf{b} + \mathsf{c}) \approx \mathsf{a}\cdot \mathsf{b} + \mathsf{a} \cdot \mathsf{c} \\
\mathsf{Sub}. & & \mathsf{a} \succsim \mathsf{b} \land \mathsf{c} \succsim \mathsf{d} \rightarrow \mathsf{a} \cdot \mathsf{c} + \mathsf{b} \cdot \mathsf{d} \succsim \mathsf{a} \cdot \mathsf{d} + \mathsf{b}\cdot \mathsf{c}
\end{eqnarray*}
\end{mdframed}
\caption{$\mathsf{AX}_{\textnormal{poly}}$ is comprised of the axioms here, in addition to the rules and axioms of $\mathsf{AX}_{\textnormal{add}}$ (where schematic term variables run over terms of $\mathcal{L}_{\textnormal{poly}}$).\label{fig:poly}}
\end{figure}




\begin{theorem}
$\mathsf{AX}_{\text{poly}}$ is sound and complete.
\end{theorem}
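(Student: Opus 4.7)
Soundness is a routine algebraic check. For completeness, the strategy is to follow the elimination-based proof of Theorem~\ref{thm:add-complete}, but to replace the appeal to Fourier--Motzkin elimination (which handled linear inequalities) with an argument from real semialgebraic geometry. First, use the base axioms together with $\mathsf{Add}$ and polynomial distributivity to reduce every probability term of $\varphi$ to a polynomial in the ``atomic'' variables $x_\delta := \mathbf{P}(\delta)$ for $\delta \in \Delta_\varphi$. Put $\neg\varphi$ into disjunctive normal form; it then suffices to refute a single disjunct, which becomes a conjunction of polynomial (in)equalities in the $x_\delta$, together with the implicit constraints $x_\delta \succsim \mathsf{0}$ and $\sum_\delta x_\delta \approx \mathsf{1}$.

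The validity of $\varphi$ translates into the infeasibility of this system of polynomial constraints over $\mathbb{R}$. Invoking a Positivstellensatz---for instance, that of Krivine--Stengle, or a Schm\"udgen/Putinar variant adapted to the simplex---produces an explicit algebraic certificate of infeasibility: a polynomial identity that combines the constraint polynomials with sums of squares and products of the ``$\geq 0$'' constraints to yield a manifest contradiction, schematically of the form $-1 = P$, where $P$ is built from sums of squares multiplied by products of the non-negative constraint polynomials.

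The principal technical obstacle is to convert this semantic certificate into a formal derivation in $\mathsf{AX}_{\text{poly}}$, given that $\mathcal{L}_{\text{poly}}$ lacks explicit subtraction; this is precisely the point at which the argument must be varied from \cite{ibeling2020probabilistic}, whose language included unary negation. The key resource is the axiom $\mathsf{Sub}$, which encodes additively the semantic fact that $(a-b)(c-d) \geq 0$ whenever $a \geq b$ and $c \geq d$: specializing it via $\mathsf{Zero}$ yields that products of non-negatives are non-negative, and iterating shows that every sum of products of squares of terms built from the $x_\delta$ is provably $\succsim \mathsf{0}$. To handle the absence of subtraction, the Positivstellensatz identity is rearranged so that all monomials carrying a negative coefficient are moved across $\succsim$, expressing the identity as a comparison between two terms of $T_{\text{poly}}$ both having only non-negative integer coefficients; the requisite algebraic manipulations are then derivable from $\mathsf{Assoc}$, $\mathsf{Comm}$, $\mathsf{Zero}$, $\mathsf{One}$, and polynomial distributivity. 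Aggregating these ingredients with the additive machinery of $\mathsf{AX}_{\text{add}}$ and, where needed, $\mathsf{Canc}$, produces a provably false statement such as $\mathsf{0} \succ \mathsf{0}$, refuting the disjunct and hence establishing $\vdash_{\mathsf{AX}_{\text{poly}}} \varphi$.
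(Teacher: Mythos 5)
Your proposal follows the paper's proof in all essentials: the same normal form over the state-description variables $\mathbf{P}(\delta)$ for $\delta \in \Delta_\varphi$, the same appeal to a Positivstellensatz to extract an algebraic certificate of infeasibility, and the same use of $\mathsf{Sub}$ --- iterated, with negative-coefficient monomials moved to the other side of $\succsim$ to compensate for the absence of subtraction --- to turn that certificate into a formal derivation. This is exactly the route the paper takes, where the iteration of $\mathsf{Sub}$ is packaged as Lemma~\ref{lem:mwqeqhweqhjweqw} and the coefficient-splitting as the decomposition $a = a^+ - a^-$ in normal monomial form.

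The one step that would fail as literally written is the form of the certificate. A certificate ``$-1 = P$'' with $P$ in the cone of the weak constraints does not exist in general for the systems arising here, because the negated literals of $\neg\varphi$ contribute \emph{strict} inequalities, and the weak relaxation of such a system is typically feasible. For example, $\mathbf{P}(A) \succsim \mathbf{P}(B) \wedge \mathbf{P}(B) \succ \mathbf{P}(A)$ is unsatisfiable, but its weak relaxation is satisfied at $\mathbb{P}(A) = \mathbb{P}(B)$; every element of the cone generated by $a - b$ and $b - a$ is non-negative at that point, so $-1$ is not in that cone. One must instead use the full Krivine--Stengle form (Theorem~\ref{thm:psatz}): a certificate $g + h + f^{2n} = 0$ with $g$ in the cone, $h$ in the ideal, and $f$ the product of the polynomials coming from the strict constraints. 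Correspondingly, the formal contradiction is not ``$P \succsim \mathsf{0}$ yet $P \approx -1$'' but rather: the positive and negative coefficient parts of the certificate are provably $\approx$ (because the certificate is the zero polynomial, so the two parts are the same formal sum of monomials), while the strict constraints of the normal form, fed through the strict variant of the $\mathsf{Sub}$-iteration applied to the factor $f^{2n}$, prove that the positive part \emph{strictly} exceeds the negative part. Your sketch promises a final contradiction of the shape $\mathsf{0} \succ \mathsf{0}$ but never identifies where the strictness comes from; without the $f^{2n}$ term it has no source, and the argument would only ever yield weak inequalities.
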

\begin{proof}
Soundness is again straightforward.
As for completeness, we first obtain a normal form.
We assume, without loss ($\mathsf{Bool}$), that $\varphi$ is a conjunction of literals.
Let $\mathsf{Prop}^{\varphi} \subset \mathsf{Prop}$ be the finite set of letters appearing in $\varphi$, with $\Delta_\varphi$ the set of complete state descriptions of $\mathsf{Prop}^{\varphi}$ (see \S{}\ref{section:notation}). Then:
\begin{lemma}\label{lem:xiocvioxcvix}
Replacement of equivalents $\mathsf{Repl}$ (see Lemma~\ref{lemma:add}) for $\mathcal{L}_{\textnormal{poly}}$ is derivable in $\mathsf{AX}_{\textnormal{poly}}$.
\end{lemma}
\begin{proof}
It suffices (by induction, and both versions of $\mathsf{Comm}$) to derive $\mathsf{a} \approx \mathsf{b} \rightarrow \mathsf{a} +\mathsf{c} \approx \mathsf{b} + \mathsf{c}$ and $\mathsf{a} \approx \mathsf{b} \rightarrow \mathsf{a} \cdot \mathsf{c} \approx \mathsf{b} \cdot \mathsf{c}$.
The first follows by completeness of $\mathsf{AX}_{\text{add}}$. As for the second: by $\mathsf{NonNull}$ case on $\mathsf{c} \succ \mathsf{0}$ and $\mathsf{c} \approx \mathsf{0}$; use $\mathsf{Canc}$ in the former case and $\mathsf{Zero}$ in the latter case.
\end{proof}
\begin{lemma}\label{lem:oxcivxcovi}
For any $\epsilon \in \sigma(\mathsf{Prop}^{\varphi})$, we have
$\mathsf{AX}_{\text{poly}} \vdash \mathbf{P}(\epsilon) \approx \mathsf{0} + \sum_{\substack{\delta \in \Delta_\varphi \\ \models \delta \rightarrow \epsilon}} \mathbf{P}(\delta)$.
\end{lemma}
\begin{proof}
Using $\mathsf{Repl}$ recursively and instances of $\mathsf{Add}$ (where $\epsilon$ stands for $\alpha$ and a single letter from $\mathsf{Prop}^{\varphi}$ stands for $\beta$) we can show that $\mathbf{P}(\epsilon) \approx \sum_{\delta \in \Delta_\varphi} \mathbf{P}(\epsilon\land \delta)$. Since formulas in $\Delta_\varphi$ are complete state descriptions for the letters appearing in $\epsilon$, we have propositionally $\models (\epsilon \land \delta) \leftrightarrow \delta$ if $\models \delta \rightarrow \epsilon$, and $\models (\epsilon \land \delta) \leftrightarrow \bot$ otherwise. Applying $\mathsf{Dist}$, $\mathsf{Repl}$, and $\mathsf{Zero}$ (from $\mathsf{AX}_{\text{add}}$) we obtain the final result.
\end{proof}
Note that the order and associativity of the sum above generally does not matter, as can be easily seen using (additive) $\mathsf{Comm}$ and $\mathsf{Assoc}$. The same holds for products, so we will simply omit this information where convenient.
Finally, we have a normal form:
\begin{lemma}
Let $\varphi$ be a conjunction of literals. Then there is a collection of polynomial terms $\{\mathsf{a}_i, \mathsf{b}_i, \mathsf{a}'_{i'}, \mathsf{b}'_{i'} \}_{\substack{1 \le i \le m\\ 1 \le i' \le m'}}$ using only the basic probability terms $\{\mathbf{P}(\delta)\}_{\delta \in \Delta_\varphi} \cup \{ \mathsf{0} \}$ such that 
\begin{align}\label{eqn:xcjhwekrnmqnwjkd}
    \mathsf{AX}_{\text{poly}} \vdash \varphi \leftrightarrow \bigwedge_{\delta \in \Delta_\varphi} \mathbf{P}(\delta) \succsim \mathsf{0} \land \sum_{\delta \in \Delta_\varphi} \mathbf{P}(\delta) \approx \mathsf{1} \land \bigwedge_{i=1}^m \mathsf{a}_i \succsim \mathsf{b}_i \land \bigwedge_{i'=1}^{m'} \mathsf{a}'_{i'} \succ \mathsf{b}'_{i'}.
\end{align}
\end{lemma}
\begin{proof}
The first two conjuncts on the right in \eqref{eqn:xcjhwekrnmqnwjkd} are clearly derivable (with or without $\varphi$).
Literals in $\varphi$ are of the form $\mathsf{a} \succsim \mathsf{b}$ or $\lnot (\mathsf{a} \succsim \mathsf{b})$, the latter being $\mathsf{b} \succ \mathsf{a}$; thus the literals in the formula give the terms $\{\mathsf{a}_i, \mathsf{b}_i, \mathsf{a}'_{i'}, \mathsf{b}'_{i'} \}_{\substack{1 \le i \le m\\ 1 \le i' \le m'}}$ in \eqref{eqn:xcjhwekrnmqnwjkd}. By
$\mathsf{Repl}$ (Lemma~\ref{lem:xiocvioxcvix}) and Lemma~\ref{lem:oxcivxcovi}, we may assume without loss that any of these terms uses only basic terms $\{\mathbf{P}(\delta)\}_{\delta \in \Delta_\varphi} \cup \{\mathsf{0}\}$.
\end{proof}
The conjuncts in \eqref{eqn:xcjhwekrnmqnwjkd} translate into a simultaneous system of polynomial inequalities in the indeterminates $\{\mathbf{P}(\delta)\}_{\delta \in \Delta_\varphi}$.
It is clear that $\varphi$ is satisfiable iff this system has a solution, so let us apply a well-known characterization \citep{Stengle1974}\footnote{Note that the variant here (see ibid., Theorem~4) assumes a polynomial ring over a subfield of the real closed field.} of the (un)feasible sets of related systems:
\begin{theorem}[Real Positivstellensatz]\label{thm:psatz}
Let $R = \mathbb{Q}[x_1, \dots, x_n]$ and $F, G, H \subset R$ be finite. Let $\mathrm{cone}(G)$ be the closure of $G \cup\{s^2 : s \in R\}$ under $+$ and $\times$ and let $\mathrm{ideal}(H) = \{ \sum_{h \in H} a_h h : a_h \in R \text{ for each } h \}$. Then either the system $\{f \neq 0, g \ge 0, h = 0 : (f, g, h) \in (F, G, H) \}$ has a solution over $\mathbb{R}^n$, or there is a polynomial {certificate of infeasibility}: there are $g \in \mathrm{cone}(G)$, $h \in \mathrm{ideal}(H)$, $n \in \mathbb{N}$ such that
\begin{equation*}
    g+h+f^{2n} = 0
\end{equation*}
where $f = \prod_{f' \in F} f'$. \qed
\end{theorem}
Note that the well-known Farkas' lemma of linear programming (obtained via Fourier-Motzkin elimination) is a special case of 
this more general theorem of the alternative, in which the certificate can always be taken to have a certain restricted form.
The following variant is more directly applicable for our purpose:
\begin{corollary}\label{cor:intpsatz}
Suppose above that $R = \mathbb{Z}[x_1, \dots, x_n]$. 
Then either the given system has a solution over $\mathbb{R}^n$, or there are $g \in \mathrm{cone}(G)$, $h \in \mathrm{ideal}(H)$, $n \in \mathbb{N}$, $d \in \mathbb{Z}^+$ such that
\begin{equation}\label{eqn:psatzcert}
    g + h + d f^{2n} = 0.
\end{equation}
\end{corollary}
\begin{proof}
In the latter alternative of Theorem~\ref{thm:psatz}, take the certificate and multiply by $d$, setting it to the least common denominator of coefficients in $g$, $h$.
\end{proof}
The normal form \eqref{eqn:xcjhwekrnmqnwjkd} yields $F, G, H \subset \mathbb{Z}^+[\{\mathbf{P}(\delta)\}_\delta]$ for Corollary \ref{cor:intpsatz}.
Note that by translating the conjunct $\sum_{\delta} \mathbf{P}(\delta) \approx 1$ as two inequalities we may take $H = \varnothing$; the strict inequality $\mathsf{a}'_{i'} \succ \mathsf{b}'_{i'}$ gives two inequalities $a'_{i'} - b'_{i'} \ge 0 \in G$ and $a'_{i'} - b'_{i'} \neq 0 \in F$. Here $a'_{i'}, b'_{i'}$ denote the (informal) polynomials translated in the obvious fashion from the respective formal terms $\mathsf{a}'_{i'}, \mathsf{b}'_{i'}$; this convention will be used hereafter.

When translating a polynomial $a$ backward to its formal equivalent denoted $\mathsf{a}$, we will assume $\mathsf{a}$ is a sum of so-called \emph{normal monomials}.
Fixing some total order $\prec_{\Delta_\varphi}$ on $K = \{ \mathbf{P}(\delta) \}_{\delta \in \Delta_\varphi} \cup \{ \mathsf{0}, \mathsf{1} \}$, call a term $\mathsf{t}$ over $K$ a normal monomial if: (1) only multiplication $\cdot$ appears within it; (2) all multiplication is left-associative; (3) for any base terms $\mathbf{P}(\delta)$, $\mathbf{P}(\delta')$ in $\mathsf{t}$, if $\delta \prec_{\Delta_\varphi} \delta'$, then $\mathbf{P}(\delta)$ appears to the left of $\mathbf{P}(\delta')$; (4) $\mathsf{1}$, that is, $\mathbf{P}(\top)$, appears exactly once as a factor in $\mathsf{t}$, and is leftmost; (5) $\mathsf{0}$, that is, $\mathbf{P}(\bot)$, appears once if ever, and if it appears then no other term from $K$ appears.
It is easy to see that any term has an equivalent that is a sum of normal monomials by applying $\mathsf{Dist}$ toward fulfilling (1), $\mathsf{Assoc}$ for multiplication toward (2), $\mathsf{Comm}$ toward (3), $\mathsf{One}$ toward (4), and $\mathsf{Zero}$ toward (5).
Fixing some order on normal monomials, and applying the completeness of $\mathsf{AX}_{\mathrm{add}}$ and again $\mathsf{Zero}$ (for $\mathsf{AX}_{\text{add}}$), we see that we can assume without loss that the sum is in said order and is left-associative, and further that the normal monomial containing $\mathsf{0}$ appears exactly once in it and appears leftmost. We call such a sum the \emph{normal monomial form} of a given term.

Now, suppose we have a certificate $c = g + d f^{2n} = 0$ as in \eqref{eqn:psatzcert} for this system.
Given a nonzero polynomial $a$ let $a^+, a^-$ be the unique polynomials with strictly positive coefficients such that $a = a^+ - a^-$; let $a^+ = a^- = 0$ if $a = 0$.
Let $j = d f^{2n}$, $c_+ = g^+ + j^+$, and $c_- = g^- + j^-$. 
We claim that $\vdash \eqref{eqn:xcjhwekrnmqnwjkd} \rightarrow \mathsf{c}_+ \succ \mathsf{c}_- \land \mathsf{c}_+ \approx \mathsf{c}_-$.
One can in fact show $\vdash \mathsf{c}_+ \approx \mathsf{c}_-$: assuming normal monomial form without loss, the same normal monomial terms, with the same multiplicities, must appear as summands in $\mathsf{c}_+$ and $\mathsf{c}_-$, because otherwise $c_+ \neq c_-$ (since $\mathsf{0}$ appears exactly once in a monomial term for both sums, it cannot make the difference).

We claim that $\vdash \eqref{eqn:xcjhwekrnmqnwjkd} \rightarrow \mathsf{g}^+ \succeq \mathsf{g}^-$ while $\vdash \eqref{eqn:xcjhwekrnmqnwjkd} \rightarrow \mathsf{j}^+ \succ \mathsf{j}^-$; again by completeness of $\mathsf{AX}_{\mathrm{add}}$, this gives $\vdash \eqref{eqn:xcjhwekrnmqnwjkd} \rightarrow \mathsf{c}_+ \succ \mathsf{c}_-$.
We make use of the following result.
\begin{lemma}\label{lem:mwqeqhweqhjweqw}
Let $T = \{\mathsf{t}_1, \dots, \mathsf{t}_n\}$ and $T' =\{\mathsf{t}'_1, \dots, \mathsf{t}'_n\}$ be sets of terms and let $S = \{ \prod_{i=1}^n \mathsf{s}_i\}_{\substack{\mathsf{s}_1 \in \{\mathsf{t}_1, \mathsf{t}'_1\} \\ \dots \\ \mathsf{s}_n \in \{\mathsf{t}_n, \mathsf{t}'_n\}}}$.
Let $S^+ \subset S$ and $S^- \subset S$ be the sets of monomial products that have an even and odd number respectively of factors from $T'$. Then
\begin{equation*}
    \mathsf{AX}_{\text{poly}} \vdash \mathsf{t}_1 \succsim \mathsf{t}'_1 \land \dots\land \mathsf{t}_n \succsim \mathsf{t}'_n \rightarrow \sum_{\mathsf{s} \in S^+} \mathsf{s} \succsim \sum_{\mathsf{s} \in S^-} \mathsf{s}
\end{equation*}
as is the analogous rule when every $\succsim$ is replaced by a strict $\succ$.
\end{lemma}
\begin{proof}
Straightforward by inducting on $n$: apply $\mathsf{Sub}$ and $\mathsf{Dist}$, e.g. $\mathsf{a} \cdot \mathsf{c} + \mathsf{b} \cdot \mathsf{d} \succsim \mathsf{a} \cdot \mathsf{d} + \mathsf{b}\cdot \mathsf{c} \land \mathsf{e} \succsim \mathsf{f} \rightarrow \mathsf{a} \cdot \mathsf{c} \cdot \mathsf{e} + \mathsf{b} \cdot \mathsf{d}\cdot \mathsf{e} + \mathsf{a} \cdot \mathsf{d} \cdot \mathsf{f} + \mathsf{b}\cdot \mathsf{c} \cdot \mathsf{f} \succsim \mathsf{a} \cdot \mathsf{c} \cdot \mathsf{f} + \mathsf{b} \cdot \mathsf{d} \cdot \mathsf{f} + \mathsf{a} \cdot \mathsf{d} \cdot \mathsf{e} + \mathsf{b}\cdot \mathsf{c} \cdot \mathsf{e}$.
\end{proof}
To see that $\vdash \eqref{eqn:xcjhwekrnmqnwjkd} \rightarrow \mathsf{g}^+ \succeq \mathsf{g}^-$, note that since $g \in \mathrm{cone}(G)$, it is a sum of terms of the form $g' = k^2 g_1 \dots g_l$, where $g_1, \dots, g_l \in G$ and $k, g_1, \dots, g_l \neq 0$. It thus suffices to show $\vdash \eqref{eqn:xcjhwekrnmqnwjkd} \rightarrow (\mathsf{g}')^+ \succeq (\mathsf{g}')^-$ for any such term $g'$.
Given our construction of $G$ from \eqref{eqn:xcjhwekrnmqnwjkd}, note that the inequality $\mathsf{g}^+_i \succeq \mathsf{g}^-_i$ appears in \eqref{eqn:xcjhwekrnmqnwjkd} for each $g_i$.
Referring to Lemma~\ref{lem:mwqeqhweqhjweqw}, note that if we define $p = (t_1 - t'_1) \dots (t_n - t'_n) \neq 0$, then $p^+ = \sum_{s \in S^+} s$ and $p^- = \sum_{s \in S^-} s$; by casing on whether $\mathsf{k}^+ \succeq \mathsf{k}^-$ or $\mathsf{k}^- \succeq \mathsf{k}^+$, we can apply the Lemma, finding in either case that $\vdash \eqref{eqn:xcjhwekrnmqnwjkd} \rightarrow (\mathsf{g}')^+ \succeq (\mathsf{g}')^-$.

Now, to see that $\vdash \eqref{eqn:xcjhwekrnmqnwjkd} \rightarrow \mathsf{j}^+ \succ \mathsf{j}^-$ it suffices to show that $\vdash \eqref{eqn:xcjhwekrnmqnwjkd} \rightarrow (\mathsf{f}^{2n})^+ \succ (\mathsf{f}^{2n})^-$. If $F = \varnothing$ is empty, then $f = \prod_{f' \in F} f' = 1$ and this is simple; otherwise, for each strict inequality $f \in F$ we have $f = a'_i - b'_i$ for some constraint $\mathsf{a}'_i \succ \mathsf{b}'_i$ in \eqref{eqn:xcjhwekrnmqnwjkd}. If any such $f = 0$ then by employing normal monomial form, we find $\vdash \eqref{eqn:xcjhwekrnmqnwjkd} \rightarrow \bot$; otherwise,
apply the strict variant of Lemma~\ref{lem:mwqeqhweqhjweqw}.
\end{proof}

\subsection{Comparative conditional probability}\label{section: conditional probability}

Recall the language of comparative conditional probability $\mathcal{L}_{\text{cond}}$ (Definition~\ref{def:multlang}), given by 
$$
    \varphi \in \mathcal{L}_{\text{cond}}\iff \varphi = \mathbf{P}(\alpha|\beta) \succsim \mathbf{P}(\delta|\gamma) \, | \, \neg \varphi \, | \, \varphi \land \psi  \text{ for any } \alpha,\beta,\delta, \gamma \in \sigma(\mathsf{Prop}).
$$
This language allows us to reason about comparisons of conditional probabilities. Reasoning about such comparisons plays an important role in a variety of settings. A salient example is probabilistic confirmation theory, where conditional probabilities are interpreted as a measure of the comparative support that some evidence confers to a hypothesis: a comparison of the form $P(H_1|E_1) \succsim P(H_2|E_2)$ is interpreted as the statement that evidence $E_2$ confirms hypothesis $H_2$ at least as much as evidence $E_1$ confirms hypothesis $H_1$. More generally, the focus on conditional probabilities is often motivated by the view---found, for instance, in \cite{Keynes1921} and \cite{Renyi}---that (numerical) conditional probabilities are more fundamental than unconditional probability judgments: similarly, one may want to treat comparisons of conditional probability as more fundamental than comparisons of unconditional probabilities.\footnote{This possibility is suggested, for example, in \cite{Konek}. See also \cite{koopman1940axioms}.}

Although one may be tempted, at first sight, to view the logic of comparative conditional probability as only a minor extension of the logic of (unconditional) comparative probability axiomatized above, it is important to note that moving to $\mathcal{L}_{\text{cond}}$ involves a substantial jump in expressivity. The language $\mathcal{L}_{\text{cond}}$ allows to express non-trivial quadratic inequality constraints\footnote{Consider again the example mentioned in the introduction, which consists of the formula $(A \wedge B)\approx \neg (A \wedge B) \wedge (A|B) \approx B$. This holds in model $(\Omega,\mathcal{F}, \mathbb{P}, \llbracket\cdot\rrbracket)$ only if
\begin{align*}
  \mathbb{P}(\llbracket A \wedge B \rrbracket) &=  1- \mathbb{P}(\llbracket A \wedge B \rrbracket)\\
   \mathbb{P}(\llbracket A \wedge B \rrbracket) &=  \mathbb{P}(\llbracket B \rrbracket)^{2}
\end{align*}
which forces the irrational solution $\mathbb{P}(\llbracket B \rrbracket)= \nicefrac{1}{\sqrt{2}}$.} and, as such, it belongs more naturally to the family of multiplicative systems. As we will now discuss, this comes with a rather significant shift in the complexity of its satisfiability problem as well as its axiomatization. In Section \ref{section: multiplicative complexity} we show that $\mathcal{L}_{\text{cond}}$ and all multiplicative systems we study in this paper have a $\exists \mathbb{R}$-complete satisfiability problem. Before this, we discuss some challenges involved in proving a completeness theorem for $\mathcal{L}_{\text{cond}}$. These difficulties can be traced back to certain delicate questions concerning the canonical representation theorem for conditional comparative probability orders, due to \cite{Domotor}. To the best of our knowledge, Domotor's proposed axiomatization is the only known such representation result for finite probability spaces that does not depend on imposing additional richness constraints on the underlying space. (See \citealt[\S 5.6]{Krantz1971} for an overview of other approaches.) However, as we explain, an important step in Domotor's proof appears to require further justification. Moreover, from the perspective of the additive-multiplicative distinction explored in this paper, Domotor's proof strategy is not fully satisfactory, as it does little to clarify the exact algebraic content of the axioms involved. In light of these observations, we may not be able to rely on Domotor's proposed axiomatization of conditional probability orders to obtain a completeness result for $\mathcal{L}_{\text{cond}}$. The question of axiomatizing $\mathcal{L}_{\text{cond}}$ is thus left open. 

\subsubsection{Conditional probability and quadratic probability structures}

In order to a give a complete axiomatization for the logic of comparative conditional probability, a natural first step is to identify necessary and sufficient conditions for the representation of a comparative order between pairs of events. Suppose we represent comparative conditional judgments with a quaternary relation $\succeq$ on a finite Boolean algebra of events $\mathcal{F}$. We write $A|B \succeq C|D$ when the relation $\succeq$ obtains for the quadruple $(A,B,C,D)$. Such a relation is probabilistically representable if there exists a probability measure $\mathbb{P}$ on $\mathcal{F}$ such that for any $A,B,C,D \in \mathcal{F}$, we have
$$
A|B \succeq C|D \text{ if and only if } \mathbb{P}(A|B) \geq  \mathbb{P}(C|D), 
$$
where $\mathbb{P}(X|Y) := \mathbb{P}(X\cap Y)/\mathbb{P}(Y)$. What properties must a quaternary order $\succeq$ satisfy in order to be representable in this way?

Throughout the literature, the answer to this question is credited to \cite{Domotor}, who proposes necessary and sufficient conditions for such a quaternary order to be representable by a probability measure. 
\begin{definition}[\cite{Domotor}]\label{FQCP}
A \emph{finite qualitative conditional probability structure} (FQCP), is a triple $(\Omega, \mathcal{F},\succeq)$ where $\Omega$ is a finite set, $\mathcal{F}$ a field of sets over $\Omega$ and $\succeq$ a quaternary relation on $\mathcal{F}$, which additionally satisfies the following properties for all $A$, $B$, $C$, $D\in\mathcal{F}$:
\begin{eqnarray*}
\mathsf{NonZero}. & & \text{$A\,|\,B\succeq C\,|\,D$ holds only if $B\,|\,\Omega \succ \varnothing\,|\,\Omega$ and $D\,|\,\Omega \succ \varnothing\,|\,\Omega$}
\end{eqnarray*}
Accordingly, in the following, in any expression $A\,|\,B\succeq C\,|\,D$ it is assumed that we are quantifying over $B, D\in\mathcal{F}_{0}$, where $\mathcal{F}_{0}:=\{X\in\mathcal{F}\,:\, X\,|\,\Omega \succ \varnothing\,|\,\Omega \}$:
\begin{eqnarray*}
\mathsf{Tot}. & & \text{either $A\,|\,B \succeq C\,|\,D$ or $C\,|\,D \succeq A\,|\,B$ for all $B$, $D\in \mathcal{F}_{0}$;}\\
\mathsf{NonDeg}. & &  \Omega \,|\,\Omega \succ \varnothing\,|\,\Omega ; \\
\mathsf{NonTriv}. & &  B\,|\,C\succeq \varnothing\,|\,A ;\\
\mathsf{Inter}. & &  A\cap B \,|\,B \succeq A\,|\,B;  \\
\mathsf{FinCanCond}_n. & & \text{if $(A_{i},B_i)_{i\leq n}$ and $(C_{i},D_i)_{i\leq n}$ are balanced\footnotemark\,and $\forall i< n$, $A_{i}\,|\,B_i\succeq C_{i}\,|\,D_i$}, \\
 & & \text{then $C_{n}\,|\,D_n \succeq A_{n}\,|\,B_n$}; \\
\mathsf{MultCan}_n. & & \text{for any permutation $\pi$ on $\{1\dots n\}$:} \\
& &  \text{if }  \textstyle \big(A_k \,\big|\, \bigcap_{0\leq i <k} A_i\big)  \,\,\succeq\,\,   \big(B_{\pi(k)} \,\big|\, \bigcap_{0\leq i < \pi(k)} B_i\big)  \text{ for all $0<k\leq n$, }\\
& & \text{then }
\textstyle \big(\bigcap_{0< i \leq n} A_i \,|\, A_0\big) \succeq \big(\bigcap_{0 <i \leq n} B_{\pi(i)} \,|\,B_0\big); \\
& & \text{Moreover, in the last two schemes above, if $\succ$ holds for any comparison} \\
& & \text{in the antecedent, then $\succ$ also holds in the conclusion.}
\end{eqnarray*}
\end{definition}\footnotetext{In the setting of conditional comparative probability, we say that the two sequences (of pairs of events) $(A_{i},B_i)_{i\leq n}$ and $(C_{i},D_i)_{i\leq n}$ are \emph{balanced} if and only if  $\sum_{i\leqslant n} \ind{A_{i}|B_i} = \sum_{i\leqslant n} \ind{C_{i}|D_i}$, where $\ind{A|B}:\Omega\to\{0,1\}$ is a partial characteristic function,  given by $$\ind{A|B}(\omega) = \begin{dcases} 
1 & \text{if $\omega\in A\cap B$}; \\
0, & \text{if $\omega\in (\Omega\setminus A)\cap B$};  \\
\text{undefined} & \text{if $\omega \not\in B$.}
\end{dcases} $$ The sum $\sum_{i\leqslant n} \ind{A_{i}|B_i}$ is undefined whenever   one of the terms is undefined.}

Domotor's proposed axiomatization of conditional comparative probability relies on an axiomatization of finite \emph{quadratic probability structures}. The strategy consists in proving a representation theorem for quadratic probability structures, giving necessary and sufficient conditions for a binary relation $\succeq$ on $\mathcal{F}^{2}$  to be representable as a product of probabilities, in the sense that there exists a probability measure $\prob$ such that 
\begin{equation}
A\times B \succeq C\times D \,\,\text{if and only if}\,\,
\prob(A)\cdot \prob(B) \geq \prob(C)\cdot \prob(D).    \label{prodrep}
\end{equation}

The representation theorem for conditional comparative probability is based on the representation result for quadratic probability structures: the essential idea is that one can express each comparison $A\,|\,B\succeq C\,|\,D$ as a comparison of products of the form 
\begin{equation}
(A\cap B) \times D \succeq^{\prime} (C\cap D) \times B. \label{condquad}
\end{equation}
The method for constructing of a probability measure that represents these product inequalities in the sense of \eqref{prodrep} yields a probability measure that represents the conditional probability comparisons given by $\succeq$. 

It is informative to sketch the reasoning in a little more detail, in order to highlight both the differences with the representation argument for the unconditional case ($\mathcal{L}_{\text{comp}}$), as well as several points in Domotor's argument that require clarification. Moreover, the axioms that Domotor proposes for quadratic probability structures give a clearer motivation for Definition \ref{FQCP}. They are given below. 

\begin{definition}[\cite{Domotor}] \label{FQPS}
A \emph{finite quadratic probability structure} (FQPS), is a triple $(\Omega, \mathcal{F},\succeq)$ where $\Omega$ is a finite set, $\mathcal{F}$ a field of sets over $\Omega$ and $\succeq$ a binary relation on $\mathcal{F}^{2}$, which satisfies the following properties for all $A$, $B$, $C$, $D\in\mathcal{F}$: 

\begin{itemize}
    \item[$\mathsf{Q1}.$] $\Omega\times \Omega \succ \varnothing\times \Omega;$
 \item[$\mathsf{Q2}.$] $B\times C\succeq  \varnothing\times A;$
 \item[$\mathsf{Q3}.$] $A\times B \succeq B\times A;$
  \item[$\mathsf{Q4}.$] $A\times B \succeq C\times D \text{ or }  C\times D\succeq A\times B;$
   \item[$\mathsf{Q5}_{n}.$] for any permutations $\pi$ and $\tau$ on $\{1\dots n\}$, \\
   if $A_{i}\times B_{i}\succ \varnothing\times \Omega$  \text{and}  $A_{\pi(i)} \times B_{\tau(i)}\succeq A_i \times B_i  \text{ for all }i<n,$ \\
   \text{then}
$A_n \times B_n\succeq A_{\pi(n)} \times B_{\tau(n)} ; $
   \item[$\mathsf{Q6}_{n}.$] $\text{If $(A_{i} \times B_i)_{i\leq n}$ and $(C_{i}\times D_i)_{i\leq n}$ are balanced,\footnotemark and $\forall i< n$, $A_{i}\times B_i\succeq C_{i}\times D_i$},$ \\
 \text{then $C_{n}\times D_n \succeq A_{n}\times B_n$};
 \item[] \text{Moreover, in $\mathsf{Q5}_n$ and $\mathsf{Q6}_n$, if $\succ$ holds for any comparison in the antecedent,} \\
 \text{then $\succ$ also holds in the conclusion.}
\end{itemize}\footnotetext{$(A_{i} \times B_i)_{i\leq n}$ and $(C_{i}\times D_i)_{i\leq n}$ are balanced whenever $\sum_{i\leq n} \ind{A_{i} \times B_i} = \sum_{i\leq n} \ind{C_{i} \times D_i} $.}
\end{definition}

Note that the axiom scheme $\mathsf{Q6}_n$ is a version of the finite cancellation axiom for $\mathcal{L}_{\text{comp}}$, applied to product sets of the form $A\times B$. Moreover, under the translation \eqref{condquad} above, the axiom scheme $\mathsf{Q6}_n$ corresponds to the scheme $\mathsf{FinCanCond}_n$ for conditional comparative probability. 

By contrast, the axiom scheme $\mathsf{Q5}_n$ captures a version of \emph{multiplicative} cancellation. Given a set of $n$ inequalities $A_{i}\times B_{i} \succeq C_i \times D_i$, say that an event is \emph{cancelled} if it has the same number of occurrences on the left hand side of these inequalities (as $A_i$ or $B_{i}$) as on the right hand side (among $C_i$, $D_i$). In the presence of the symmetry axiom $\mathsf{Q3}$, the axiom $\mathsf{Q5}_n$ asserts that whenever we have sequence of $n$ inequalities where all events are cancelled \emph{except} for $A$ and $B$ on the left-hand side, and $C$ and $D$ on the right-hand side, then we can conclude $A\times B \succeq C\times D$. This is exactly what we would obtain by multiplying the probabilities of all left-hand side terms on the left, and the probabilities all right-hand side terms on the right, and then cancelling any terms occurring on each side. Under the translation $\eqref{condquad}$, $\mathsf{Q5}_n$ corresponds to the axiom $\mathsf{MultCan}_n$ from Definition \ref{FQCP}. 

\subsubsection{Domotor's argument}

The necessity of axioms (schemes) $\mathsf{Q1}$-$\mathsf{Q6}_n$ is easily verified. Domotor (\citeyear[p. 66]{Domotor}) provides an argument to the effect that these axioms are also sufficient for the order $\succeq$ to be product-representable by a probability measure, in the sense of condition \eqref{prodrep}. We will now give a brief description of the proof strategy. This will serve, first, to emphasize the algebraic nature of the problem, which involves proving the consistency of certain polynomial constraints and thus calls for techniques beyond the standard linear algebra involved in the purely additive setting of Theorem \ref{SCOTTTHM}. Secondly, as we will see, there is a step in the argument which suggests that the proof of sufficiency is incomplete as it stands. 

Suppose we are given a finite quadratic probability structure $(\Omega, \mathcal{F}, \succeq)$ as in Definition \ref{FQPS}: without loss of generality, we will assume we are working with the full powerset algebra $\mathcal{F}=\mathcal{P}(\Omega)$. We want to show that the relation $\succeq$ is product-representable in the sense of \eqref{prodrep}. Consider the space $\mathbb{R}^{n\times n}$ containing all indicator functions $\ind{A\times B}$ of products $A\times B$. Take the space of indicator functions as vectors in $\mathbb{R}^{n\times n}$, where each $\ind{A\times B}$ is the vector $\mb{x}$ where $x_{ij}=1$ exactly if $(\omega_{i}, \omega_{j})\in A\times B$ (that is, we list all elements of $\Omega\times\Omega$ in lexicographic order). Lift the order $\succeq$ from $\mathcal{F}$ to $M=\{\ind{A\times B}\,|\, A\times B\in\mathcal{F}\}$. Now we apply Lemma \ref{replemma}: note that axioms $\mathsf{Q4}$ and $\mathsf{Q6}_n$ give us precisely conditions (a) and (b) in the statement of the Lemma. This gives us the existence of a linear functional $\hat{\Phi}: \mathbb{R}^{n\times n}\to \mathbb{R}$ with $\hat{\Phi}(\ind{A\times B}) \geq \hat{\Phi}(\ind{C\times D})$ whenever $A\times B\succeq C\times D$. This functional is of the form 
\begin{align}
    \hat{\Phi}(\mb{x})= \mathbf{a}^{T}\mb{x}= (a_{11}, a_{12},\dots, a_{1n}, a_{21},\dots a_{nn})  & \begin{bmatrix}
           x_{11} \\
           x_{21} \\
           \vdots \\
           x_{nn}
         \end{bmatrix}
        \end{align}
The existence of such a linear functional already entails that there is a measure $\mu$ on $\mathcal{P}(\Omega\times \Omega)$ that respects the ordering on Cartesian products $\succeq$, by defining $\mu(A\times B):= \hat{\Phi}(\ind{A\times B})/{\hat{\Phi}(\ind{\Omega\times \Omega})}$. But we need to ensure that there a measure $\mu$ representing $\succeq$ which corresponds to the product of a measure $\prob$ on $\mathcal{P}(\Omega)$: i.e. such that $\mu(A\times B)= \prob(A)\cdot \prob(B)$.

We represent the linear functional $\hat{\Phi}$ as a bilinear functional $\Phi:\mathbb{R}^{n}\times \mathbb{R}^{n}\rightarrow \mathbb{R}$. It can be represented as a matrix 
$\Phi(\mathbf{x},\mathbf{y})= \mathbf{x}^{T}\mathbf{M}_{\Phi}\mathbf{y} $ where $\mathbf{M}_{\Phi}$ is a $n\times n$ matrix: intuitively, we want the $(i,j)$-th entry to represent the probability of $\{\omega_{i}\}\times\{\omega_{j}\}$. We write is as 
$$
\Phi(\mathbf{x},\mathbf{y})= \mathbf{x}^{T}\mathbf{M}_{\Phi}\mathbf{y} = (x_{1},\dots,x_{n})\begin{pmatrix}
a_{11} & \dots & a_{1n} \\
\vdots & & \vdots \\
a_{n1} & & a_{nn}
\end{pmatrix} \begin{bmatrix}
           y_{1} \\
           y_{2} \\
           \vdots \\
           y_{n}
         \end{bmatrix}
$$
We then have $\Phi(\ind{A}, \ind{B})= \hat{\Phi}(\ind{A\times B})$. Now, in order for $\Phi$ to give rise a product measure as required, we want to know whether it can be decomposed as the product of linear functionals $f_1$ and $f_2$, so that we can write $\Phi(\ind{A},\ind{B})=f_1(\ind{A})f_2(\ind{B})$. This would suffice, as the symmetry axiom $\mathsf{Q3}$ ensures that $\Phi(\ind{A}, \ind{B})=\Phi(\ind{B}, \ind{A})$ ($\mathbf{M}_{\Phi}$ is symmetric), which would ensure that $f_1=f_2$. Then setting $\prob(A) := f_1(\ind{A}) / f_1(\ind{\Omega})$ would give the desired measure (here $\mathsf{Q1}$ and $\mathsf{Q2}$ ensure it is a non-degenerate measure). A general condition for a bilinear functional being decomposable in this way given by the following standard characterization:\footnote{
A $n\times n$ matrix $\mathbf{M}$ has rank 1 if and only if it can be written in the form $\mathbf{M}= \mathbf{u}\mathbf{v}^{T}$ for two vectors $\mathbf{u}, \mathbf{v}$. In one direction, suppose $\mathbf{M}_{\Psi}$ has rank one, and so is of the form $\mathbf{u}\mathbf{v}^{T} $. Then \begin{align*}
    \Psi( \mb{x},\mb{y}) &= \mb{x}^{T}(\mb{u}\mb{v}^{T})\mb{y} \\
    &= (\mb{x}^{T}\mb{u})(\mb{v}^{T}\mb{y})\\
    &= (\mb{u}^{T}\mb{x})(\mb{v}^{T}\mb{y}),
\end{align*}
so that we define $f_{1}(\mb{x}):=\mb{u}^{T}\mb{x}$ and $f_{2}(\mb{x}):=\mb{v}^{T}\mb{x}$ for the decomposition into linear functionals. Conversely, given two linear functionals generated by respective vectors $\mb{u}$ and $\mb{v}$ in the same fashion, the matrix $\mb{u}\mb{v}^{T}$ generates a decomposable bilinear functional.}
\begin{proposition}
A bilinear functional $\Psi: \mathbb{R}^{n}\times \mathbb{R}^{n}\rightarrow \mathbb{R}$ can be written as $\Psi(\mathbf{x},\mathbf{y})= f_{1}(\mathbf{x})f_{2}(\mathbf{y})$ for two linear functionals  $f_{1}, f_{2}:\mathbb{R}^{n}\rightarrow\mathbb{R}$ if and only if  $\text{rank}(\mathbf{M}_{\Psi})=1$. We then say $\Psi$ is a rank-1 functional.
\end{proposition}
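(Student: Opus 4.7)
The plan is to reduce both directions to two standard facts: (i) every linear functional on $\mathbb{R}^n$ is represented by a unique vector via $f(\mathbf{x}) = \mathbf{u}^T\mathbf{x}$, and (ii) a matrix $\mathbf{M} \in \mathbb{R}^{n\times n}$ has rank $1$ if and only if it is an outer product $\mathbf{u}\mathbf{v}^T$ of two nonzero vectors. Given these, the proposition becomes an essentially algebraic unpacking of the identity $\mathbf{x}^T(\mathbf{u}\mathbf{v}^T)\mathbf{y} = (\mathbf{u}^T\mathbf{x})(\mathbf{v}^T\mathbf{y})$.

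For the ``if'' direction, I would start by assuming $\mathrm{rank}(\mathbf{M}_{\Psi}) = 1$. Since the column space of $\mathbf{M}_{\Psi}$ is one-dimensional, it is spanned by some nonzero $\mathbf{u} \in \mathbb{R}^n$; each column of $\mathbf{M}_{\Psi}$ is then a scalar multiple of $\mathbf{u}$, and collecting these scalars into $\mathbf{v} \in \mathbb{R}^n$ gives $\mathbf{M}_{\Psi} = \mathbf{u}\mathbf{v}^T$. Defining $f_1(\mathbf{x}) := \mathbf{u}^T\mathbf{x}$ and $f_2(\mathbf{y}) := \mathbf{v}^T\mathbf{y}$, a direct computation yields $\Psi(\mathbf{x},\mathbf{y}) = \mathbf{x}^T\mathbf{u}\mathbf{v}^T\mathbf{y} = f_1(\mathbf{x})f_2(\mathbf{y})$, as required.

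For the ``only if'' direction, I would take as given a decomposition $\Psi(\mathbf{x},\mathbf{y}) = f_1(\mathbf{x})f_2(\mathbf{y})$ and represent $f_1$, $f_2$ via the vectors $\mathbf{u}$, $\mathbf{v}$ they determine, so that $\Psi(\mathbf{x},\mathbf{y}) = (\mathbf{u}^T\mathbf{x})(\mathbf{v}^T\mathbf{y}) = \mathbf{x}^T(\mathbf{u}\mathbf{v}^T)\mathbf{y}$. By uniqueness of the matrix representing a bilinear form, $\mathbf{M}_{\Psi} = \mathbf{u}\mathbf{v}^T$, which has rank at most $1$, and exactly $1$ whenever both $\mathbf{u}$ and $\mathbf{v}$ are nonzero.

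There is no substantive obstacle; the only mildly delicate point is the degenerate case in which one of $f_1$, $f_2$ is identically zero. Then $\Psi \equiv 0$ and $\mathbf{M}_{\Psi} = \mathbf{0}$ has rank $0$, so strictly speaking the equivalence applies to nonzero $\Psi$ (or equivalently should be read as $\mathrm{rank}(\mathbf{M}_{\Psi}) \leq 1$). This is a standard convention and is harmless for the intended application, where $\Psi$ arises from a nondegenerate quadratic probability structure and hence is forced to be nonzero by axioms $\mathsf{Q1}$ and $\mathsf{Q2}$.
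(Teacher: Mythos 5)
Your proof is correct and follows essentially the same route as the paper's own argument (given in a footnote): characterize rank-$1$ matrices as outer products $\mathbf{u}\mathbf{v}^{T}$ and unpack the identity $\mathbf{x}^{T}(\mathbf{u}\mathbf{v}^{T})\mathbf{y} = (\mathbf{u}^{T}\mathbf{x})(\mathbf{v}^{T}\mathbf{y})$ in both directions. Your explicit handling of the degenerate case $\Psi \equiv 0$ is a small but legitimate refinement that the paper elides.
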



This is where Domotor's argument seems to face a difficulty (or perhaps an omission in the presentation). The proof began by establishing the existence of the bilinear function $\Phi$ which represents the order $\succeq$. At this point the proof proceeds to argue that $\Phi$\textemdash a generic order-preserving linear functional obtained from Lemma \ref{replemma}\textemdash has rank 1.\footnote{See \citep[p.68]{Domotor}.} Thus the strategy seems to be to argue that any such functional representing $\succeq$ has rank 1. This, however, is not the case, as can be seen by a simple example. Say that two functionals $\Phi$ and $\Psi$ are order-equivalent on $\mathcal{P}(\Omega)\times\mathcal{P}(\Omega)$ if we have $\succeq_{\Phi}=\succeq_{\Psi}$, where we define $A\times B\succeq_{\Phi} B\times C$ if and only if $\Phi(\ind{A},\ind{B})\geq  \Phi(\ind{C}, \ind {D})$. We can have two such functionals that are order-equivalent with only one of them having rank 1. For instance, in the case $\Omega=\{\omega_{1}, \omega_{2}\}$, we can take functionals given by matrices
 $$
 \mathbf{M}_{\Phi} = \begin{pmatrix}
1/16 & 3/16 \\
3/16 & 9/16 
\end{pmatrix}
\text{ and }
 \mathbf{M}_{\Psi} = \begin{pmatrix}
1/12 & 3/12\\
3/12 & 5/12
\end{pmatrix}
 $$
Here $\Phi$ is a rank-1 functional, whose order $\succeq_{\Phi}$ is representable by the probability measure $\prob(\{\omega_{1}\})=\nicefrac{1}{4}$, $\prob(\{\omega_{2}\})=\nicefrac{3}{4}$. However, $\Psi$ is evidently not rank-1. Yet the order $\succeq_{\Psi}$ agrees with $\succeq_{\Phi}$, and thus satisfies axioms $\mathsf{Q1}$-$\mathsf{Q6}_n$. The axioms $\mathsf{Q1}$-$\mathsf{Q6}_n$ thus cannot guarantee in general that any linear functional that represents the order has rank 1.

Clarifying this step of the argument (and indeed, determining if this difficulty may be due to a presentational ambiguity, rather than a logical gap) is complicated by the fact that the author does not spell out the decomposability argument in detail. Instead, the argument very briefly appeals to an unstated result in geometry of webs \citep{Aczel}, from which, given the multiplicative cancellation axiom $\mathsf{Q5}_{n}$, decomposability is inferred.\footnote{Unfortunately, we were unable to trace the original article by Acz\'{e}l, Pickert and Rad\'{o}.} 

Returning to the task at hand: we know that there exists a bilinear functional $\Phi$ which represents the order $\succeq$. In order to ensure the order is representable via products of probabilities, we want to show that the axioms guarantee the existence of a bilinear functional \emph{of rank 1} that is order-equivalent to $\Phi$. 

\subsubsection{The (semi-)algebraic perspective on the representation problem}


Whether or not the approach via web geometry can be made to succeed in showing the sufficienty of Domotor's axioms, there is a sense in which it not the most natural from the perspective of investigating the additive-multiplicative divide in probabilistic logics. Recall that Scott's representation theorem (Theorem \ref{SCOTTTHM}), and thus the standard completeness proof for $\mathcal{L}_{\text{comp}}$, directly relates the finite cancellation axioms to certificates of inconsistency for linear inequality systems. In the same way, it is of intrinsic interest to pursue a representation theorem for conditional comparative probability that would directly reveal the algebraic content of Domotor's proposed axioms (and axiom $\mathsf{Q5}_n$ in particular, which captures the multiplicative behaviour of conditional probability orders). 

It is thus worth considering a direct algebraic formulation of the problem. Each order $\succeq$ satisfying the axioms $\mathsf{Q1}$-$\mathsf{Q6}_n$ generates a system of polynomial inequalities. For every set $A\subset\Omega$, the polynomial corresponding to $A$ is given by 
$$p_{A}(x_{1},...,x_{n}) = \sum^{n}_{i=1}\ind{A}(\omega_{i}) x_{i}.$$ Now for each inequality $A\times B \succeq C\times D$ in the order, we add a constraint 
$$
p_{A}(\overline{x})p_{B}(\overline{x})- p_{C}(\overline{x})p_{D}(\overline{x})\geq 0,
$$
and similarly for strict inequalities. We need to show the resulting system is consistent whenever $\succeq$ satisfies the axioms $\mathsf{Q1}$-$\mathsf{Q6}_n$. 

Note that this system of inequalities is given by quadratic forms: each polynomial $p_{A}(\overline{x})p_{B}(\overline{x})- p_{C}(\overline{x})p_{D}(\overline{x})$ is homogeneous of degree 2. Formulated in this way, it is clear that proving a representation result would amount to showing that the axioms ensure that the semi-algebraic sets defined by quadratic forms of this type are indeed nonempty. By analogy to the case of $\mathcal{L}_{\text{comp}}$, here it is natural to approach this problem via the  Positivstellsatz (Theorem \ref{thm:psatz}). As we mentioned above, the Positivstellensatz is a semi-algebraic analogue of hyperplane separation theorems, like those used in proving completeness for $\mathcal{L}_{\text{comp}}$. It establishes that there exist certificates of infeasibility of a given form for any infeasible system of polynomials. Just as, in the additive case, any certificate of infeasibility was shown to correspond to a failure of a finite cancellation axiom, so too, one may hope, a failure of the Domotor axioms (and the $\mathsf{Q5}_n$ axiom in particular) can always be extracted from a Positivstellensatz certificate. 

 


We leave a definite solution to the representation problem for conditional probability\textemdash and completeness for $\mathcal{L}_{\text{cond}}$\textemdash for further work: as a first step, we show in the Appendix that the axioms for quadratic probability structures are indeed sufficient for representation in the special case of $|\Omega|=2$. The observations in this section motivate further work on determining the adequacy of Domotor's axioms: this is not only to ensure that the canonical axiomatization for conditional probability orders on finite spaces is correct as it stands, but also because, as we suggested above, it is of intrinsic interest to pursue an alternative, algebraically transparent proof of the representation result.

\subsection{Complexity of multiplicative systems}\label{section: multiplicative complexity}

The main result of this section finds a uniform complexity for all of our multiplicative systems:

\begin{theorem}\label{etr-completeness}
$\mathsf{SAT}_{\text{ind}}$, $\mathsf{SAT}_{\text{confirm}}$, $\mathsf{SAT}_{\text{cond}}$, and $\mathsf{SAT}_{\text{poly}}$ are all $\exists\mathbb{R}$-complete.
\end{theorem}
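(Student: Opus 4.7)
The proof divides into an upper bound (membership in $\exists\mathbb{R}$ for all four problems) and a lower bound ($\exists\mathbb{R}$-hardness), with the hardness established for the most restricted system $\mathsf{SAT}_{\text{ind}}$ and transferred to the others by simple polynomial-time translations.

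For the upper bound, given a formula $\varphi$ in any of the four languages, the plan is to construct in polynomial time (modulo an $\mathsf{NP}$-reduction, which is permitted since $\exists\mathbb{R}$ is closed under $\mathsf{NP}$-reductions) an ETR instance that is feasible iff $\varphi$ is satisfiable. Concretely: nondeterministically guess a polynomial-sized ``support'' $S \subseteq \Delta_\varphi$ of state descriptions to be assigned positive probability; introduce one non-negative real variable $x_\delta$ for each $\delta\in S$, with the constraint $\sum_{\delta\in S}x_\delta=1$; and translate every atomic subformula into a polynomial (in)equality over these variables in the obvious way (e.g.\ $\mathbf{P}(\alpha)\cdot\mathbf{P}(\beta)$ becomes $(\sum_{\delta\models\alpha}x_\delta)(\sum_{\delta\models\beta}x_\delta)$; a comparison $\mathbf{P}(\alpha|\beta)\succsim\mathbf{P}(\gamma|\delta)$ becomes the quadratic inequality dictated by the semantics). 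A small-support property analogous to Lemma~\ref{additive-small-model}, but now for polynomial systems and available via a Carathéodory-style argument on the feasible set of the induced semialgebraic system, ensures that a support of size polynomial in $|\varphi|$ always suffices, keeping the constructed ETR instance of polynomial size. Since $\mathcal{L}_{\text{poly}}$ is the most expressive of the four and subsumes (via straightforward rewriting) what the other three can say, this single reduction handles all four problems.

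For the lower bound, the plan is to show $\mathsf{SAT}_{\text{ind}}$ is $\exists\mathbb{R}$-hard and then deduce the rest. Start from a bounded, degree-$2$ fragment of ETR---feasibility of systems of equations of the forms $x_i=x_j+x_k$, $x_i=x_jx_k$, and $x_i=c$ (for rational $c\in[0,1]$) over variables $x_1,\dots,x_n\in[0,1]$---which is $\exists\mathbb{R}$-complete by standard degree-reduction and rescaling results (see \citealt{schaefer2013realizability} and \citealt{schaefer2009complexity}). Given such a system $\mathcal{S}$, build a $\mathcal{L}_{\text{ind}}$-formula $\varphi_{\mathcal{S}}$ over fresh atoms $A_1,\dots,A_n$ (encoding $x_i$ as $\mathbf{P}(A_i)$) plus auxiliary atoms, as follows. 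For each product constraint $x_i=x_jx_k$, introduce a fresh atom $M_{jk}$ together with the conjuncts $A_j\indep A_k$ and $\mathbf{P}(M_{jk})=\mathbf{P}(A_j\wedge A_k)=\mathbf{P}(A_i)$, which together force $\mathbf{P}(A_i)=\mathbf{P}(A_j)\mathbf{P}(A_k)$. For each sum constraint $x_i=x_j+x_k$, introduce fresh atoms $B_j,B_k$ together with the conjuncts $\mathbf{P}(B_j\wedge B_k)=\mathbf{P}(\bot)$, $\mathbf{P}(B_j)=\mathbf{P}(A_j)$, $\mathbf{P}(B_k)=\mathbf{P}(A_k)$, and $\mathbf{P}(B_j\vee B_k)=\mathbf{P}(A_i)$, which jointly encode additivity on disjoint events. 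Constants $x_i=c$ with rational $c$ are encoded by a fixed gadget (e.g.\ a chain of independence/equality constraints between independent copies of a single auxiliary atom, whose probability is pinned down to $c$). Satisfiability of $\varphi_{\mathcal{S}}$ is then equivalent to feasibility of $\mathcal{S}$ over $[0,1]^n$, and the translation is polynomial in $|\mathcal{S}|$.

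Finally, to lift the hardness from $\mathcal{L}_{\text{ind}}$ to the other three systems, I would give short polynomial-time satisfiability-preserving syntactic translations: $\mathbf{P}(\alpha)=\mathbf{P}(\beta)$ becomes $\mathbf{P}(\alpha)\succsim\mathbf{P}(\beta)\wedge\mathbf{P}(\beta)\succsim\mathbf{P}(\alpha)$ (in $\mathcal{L}_{\text{poly}}$), or $\mathbf{P}(\alpha|\top)\approx\mathbf{P}(\beta|\top)$ (in $\mathcal{L}_{\text{cond}}$), or directly in $\mathcal{L}_{\text{confirm}}$; and $\mathbf{P}(\alpha)\indep\mathbf{P}(\beta)$ becomes $\mathbf{P}(\alpha\wedge\beta)\approx\mathbf{P}(\alpha)\cdot\mathbf{P}(\beta)$ in $\mathcal{L}_{\text{poly}}$, and $\mathbf{P}(\alpha|\beta)\approx\mathbf{P}(\alpha)$ in both $\mathcal{L}_{\text{confirm}}$ and $\mathcal{L}_{\text{cond}}$. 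The main technical obstacle is the hardness reduction into $\mathcal{L}_{\text{ind}}$: making certain that every polynomial constraint of the source ETR instance can be simulated using only the very limited atomic vocabulary of $\mathcal{L}_{\text{ind}}$ (Boolean combinations of $=$ and $\indep$), and carefully controlling the size blow-up while preserving the restriction that all variables lie in $[0,1]$. The small-support/Carathéodory step in the upper bound is the second technical pressure point, but it follows the same general shape as the proofs of Lemmas~\ref{linear-small-model} and~\ref{additive-small-model} lifted from the linear to the semialgebraic setting.
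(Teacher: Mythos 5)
Your overall architecture is the same as the paper's: membership is shown for $\mathcal{L}_{\text{poly}}$ via an $\mathsf{NP}$-reduction that guesses a small support of state descriptions, hardness is shown for $\mathcal{L}_{\text{ind}}$ by encoding products as independence and sums as disjoint unions, and everything else follows from the chain $\mathsf{SAT}_{\text{ind}} \leq \mathsf{SAT}_{\text{confirm}} \leq \mathsf{SAT}_{\text{cond}} \leq \mathsf{SAT}_{\text{poly}}$. Two points of divergence are worth flagging, one of which is a real gap as stated.

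The gap is in your justification of the small-support step. You appeal to ``a Carath\'eodory-style argument on the feasible set of the induced semialgebraic system,'' but there is no such theorem for general semialgebraic sets: Carath\'eodory controls the support of points in a convex hull, and the feasible set of a polynomial system need not be convex, so the claim that a polynomially supported solution exists does not follow from this tool. The paper's resolution is simpler and entirely linear: fix a satisfying measure $\mathbb{P}$, and consider the \emph{linear} system in the unknowns $\mathbf{P}(\delta)$, $\delta \in \Delta_\varphi$, consisting of $\sum_\delta \mathbf{P}(\delta) = 1$ together with $\sum_{\delta \models \epsilon} \mathbf{P}(\delta) = \mathbb{P}(\epsilon)$ for each of the (at most $|\varphi|$) events $\epsilon$ mentioned in $\varphi$, with the right-hand sides held fixed at their values under $\mathbb{P}$. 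Lemma~\ref{linear-small-model} then yields a measure with support of size at most $|\varphi|$ that agrees with $\mathbb{P}$ on every event occurring in $\varphi$, hence still satisfies all the polynomial constraints. You should replace the Carath\'eodory appeal with this argument; as written, that step would not survive scrutiny.

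The second divergence is benign but deserves a comment. The paper reduces from the specific $\exists\mathbb{R}$-inverse problem of \cite{abrahamsen2018art} (constraints $x_i + x_j = x_k$ and $x_i x_j = 1$ with $x_i \in [\nicefrac{1}{2}, 2]$), which is stated there as an off-the-shelf $\exists\mathbb{R}$-complete problem; after rescaling by $\nicefrac{1}{2n}$ all values land in $[\nicefrac{1}{4n}, \nicefrac{1}{n}]$ and the product constraints become $\mathbf{P}(\delta_i \wedge \delta_j) = \nicefrac{1}{4n^2}$ together with independence. Your source fragment (general products $x_i = x_j x_k$, constants $x_i = c$, all variables in $[0,1]$) is plausibly also $\exists\mathbb{R}$-complete, but that completeness claim is itself nontrivial: the standard rescaling of a ball-bounded instance into $[0,1]$ introduces affine constants into both the linear and the quadratic constraints, so you would need additional gadgets or a citation for that exact fragment. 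Using the Abrahamsen et al.\ formulation spares you that work and also makes the range constraints $\nicefrac{1}{n} \succsim \mathbf{P}(\delta_i) \succsim \nicefrac{1}{4n}$ and the constant $\nicefrac{1}{4n^2}$ expressible by the same disjoint-partition gadget (fresh events $\epsilon_1, \dots, \epsilon_N$, pairwise disjoint, equiprobable, with union of probability one) that you already implicitly rely on for your sum encoding.
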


To show the above theorem, we borrow the following lemma from \cite{abrahamsen2018art}:

\begin{lemma}\label{etr-inv}
Fix variables $x_1,...,x_n$, and set of equations of the form $x_i + x_j = x_k$ or $x_i x_j = 1$, for $i,j,k \in [n]$. Let $\exists\mathbb{R}$-inverse be the problem of deciding whether there exist reals $x_1,...,x_n$ satisfying the equations, subject to the restrictions $x_i \in [\nicefrac{1}{2},2]$. This problem is $\exists\mathbb{R}$-complete.
\end{lemma}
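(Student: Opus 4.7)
The plan has two parts: showing membership in $\exists\mathbb{R}$ and proving $\exists\mathbb{R}$-hardness. Membership is immediate from the definitions: a conjunction of equations of the form $x_i + x_j = x_k$ and $x_i x_j = 1$, together with the box constraints $1/2 \le x_i \le 2$, is a system of polynomial (in)equalities over the reals, so the decision problem is a special case of ETR.

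For hardness, I would reduce from a restricted version of ETR that is already known to be $\exists\mathbb{R}$-complete: specifically, ETR over a bounded box (which is standard), or better, the full-multiplication variant ETR-MULT in which gates are restricted to $x_i + x_j = x_k$ and $x_i \cdot x_j = x_k$. The reduction proceeds in three stages. First, \emph{arithmetize}: by introducing one fresh variable per subterm, rewrite the input polynomial system as an equivalent system whose only operations are binary addition and binary multiplication. Second, \emph{replace multiplication by inversion}: every multiplication gate $x_k = x_i x_j$ is simulated by a constant-size gadget using only additions and equations of the form $y z = 1$. This is the heart of the reduction, and it leverages algebraic identities such as $\frac{1}{x_i} + \frac{1}{x_j} = \frac{x_i + x_j}{x_i x_j}$ together with a few auxiliary inversions so that one can recover $x_i x_j$ as the inverse of a value computable from $x_i^{-1}, x_j^{-1}, (x_i + x_j)^{-1}$ and sums thereof.

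Third, \emph{enforce the range constraint} $x_i \in [1/2, 2]$. The interval is deliberately chosen to be closed under inversion (since $1/(1/2) = 2$), so the inversion gates $x_i x_j = 1$ are unproblematic by themselves; the difficulty is that additive combinations of numbers in $[1/2, 2]$ can easily leave the interval. I would address this by first passing to a version of ETR whose solutions are guaranteed to lie in a bounded box (this is folklore-equivalent to ETR), then encoding each original quantity $q_i$ by an affinely shifted-and-scaled representative $x_i = (q_i + C)/M$ for large enough constants $C, M$ chosen so that the full computation---together with all auxiliary values introduced by steps one and two---stays within $[1/2, 2]$. The constants $C$ and $M$ themselves must be representable within $[1/2, 2]$; they can be built from a few fixed constants and a chain of additive gadgets.

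The main obstacle is step three. The gadgets of step two are designed for an ``algebraic'' setting where variables may range freely; forcing them into the narrow interval $[1/2, 2]$ requires carefully intertwined rescaling so that no intermediate sum overshoots $2$ and no inversion target drops below $1/2$. Once one shows that a uniform polynomial-time choice of $C$ and $M$ suffices for any ETR instance (using the bounded-solution normal form), $\exists\mathbb{R}$-hardness follows, and combining with membership gives $\exists\mathbb{R}$-completeness.
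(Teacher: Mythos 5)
First, note that the paper does not actually prove this lemma: it is imported from \cite{abrahamsen2018art}, and the text only records a two-step sketch of that external proof (reduce from root-finding for degree-4 rational polynomials via repeated variable substitution, then confine a root known to lie in a closed ball to $[\nicefrac{1}{2},2]$ by shifting and scaling). Your plan is the same in spirit but starts from a different normal form (ETR with addition and multiplication gates rather than degree-4 root-finding), which is a legitimate alternative entry point.

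There are, however, two genuine gaps in your sketch, and they sit exactly at the two places where the real work happens. The first is the multiplication-via-inversion gadget. The identity $\frac{1}{x_i}+\frac{1}{x_j}=\frac{x_i+x_j}{x_i x_j}$ does not by itself deliver $x_i x_j$: inverting the left-hand side yields $\frac{x_i x_j}{x_i + x_j}$, and recovering $x_i x_j$ from that requires multiplying by $x_i+x_j$ --- precisely the operation you are trying to eliminate. The gadget that actually works goes through \emph{squaring}: from $\frac{1}{z}-\frac{1}{z+1}=\frac{1}{z^2+z}$ one obtains $z^2$ using only addition (which encodes subtraction, since $x+y=z$ can be read in either direction), the constant $1$, and inversion; then $x_i x_j$ is recovered from the polarization identity $2x_ix_j=(x_i+x_j)^2-x_i^2-x_j^2$, with the halving handled by a fresh variable $u$ satisfying $u+u=w$. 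Your phrase ``a few auxiliary inversions'' gestures at this but does not supply it, and the identity you do supply leads to a circularity. The second gap is the one you yourself flag as ``the main obstacle'': keeping every variable, including all the auxiliaries introduced by the squaring gadgets, inside $[\nicefrac{1}{2},2]$. This is not a routine affine rescaling, because the interval is tiny, the gadget constants ($1$, $z+1$, the halving variables) must themselves be representable, and an affine change of variables $x\mapsto (q+C)/M$ does not commute with the inversion gates $yz=1$. In \cite{abrahamsen2018art} this step consumes most of the effort and requires redesigning the gadgets so that each intermediate quantity is provably confined to the interval; asserting that ``a uniform polynomial-time choice of $C$ and $M$ suffices'' does not establish it. As it stands, your proposal is a correct outline of the known strategy with the two load-bearing steps left unproven, one of them resting on an identity that does not work as stated.
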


We omit the proof of Lemma~\ref{etr-inv}, but at a high level, it proceeds in two steps. First, one shows that finding a real root of a degree-4 polynomial with rational coefficients is $\exists\mathbb{R}$-complete, and then repeatedly performs variable substitutions to get the constraints $x_i + x_j = x_k$ and $x_i x_j = 1$. Second, one shows that any such polynomial has a root within a closed ball about the origin, and then shifts and scales this ball to contain exactly the range $[\nicefrac{1}{2},2]$.

\begin{proof}[Proof of Theorem~\ref{etr-completeness}]
Since 
\begin{equation*}
    \mathsf{SAT}_{\text{ind}} \leq \mathsf{SAT}_{\text{confirm}} \leq \mathsf{SAT}_{\text{cond}} \leq \mathsf{SAT}_{\text{poly}},
\end{equation*}
it suffices to show that $\mathsf{SAT}_{\text{ind}}$ is $\exists\mathbb{R}$-hard and that $\mathsf{SAT}_{\text{poly}}$ is in $\exists\mathbb{R}$. To show the former, we extend an argument given by \cite{mosseibelingicard2021}, and to show the latter, we repeat the proof given by \cite{mosseibelingicard2021} (see also \cite{ibeling2020probabilistic}).

Let us first show that $\mathsf{SAT}_{\text{poly}}$ is in $\exists\mathbb{R}$. Suppose that $\varphi \in \mathcal{L}_{\text{poly}}$ is satisfied by some model $\mathbb{P}$. Using the fact that $\exists\mathbb{R}$ is closed under $\mathsf{NP}$-reductions (\cite{ten2013data}; cf. Definition~\ref{reductions}) it suffices to provide  an $\NP$-reduction of $\varphi$ to a formula $\psi \in \mathsf{ETR}$. Let $E$ contain all $\epsilon$ such that $\mathbf{P}(\epsilon)$ appears in $\varphi$. Then consider the system of equations
\begin{align*}
    \sum_{\delta \in \Delta_{\varphi}} \mathbf{P}(\delta) &= 1\\
    \sum_{\substack{\delta \in \Delta_{\varphi}\\ \delta \models \epsilon}}\mathbf{P}(\delta) &= \mathbb{P}(\epsilon) \text{ for } \epsilon \in E.
\end{align*}
When plugged in for $\mathbb{P}$, the measure $\mathbb{P}$ satisfies the above system, so by Lemma~\ref{linear-small-model}, this system is satisfied by some model $\mathbb{P}_{\text{small}}$ assigning positive (small-sized) probability to a subset $\Delta_{\text{small}} \subseteq \Delta_\varphi$ of size at most $|E| \leq |\varphi|$. Let the set $\Delta_{\text{small}}$ and the model $\mathbb{P}_{\text{small}}$ be the certificate of the $\NP$-reduction.\footnote{Equivalently, imagine the reduction proceeding will all possible choices of $\Delta_{\text{small}}$ and $\mathbb{P}_{\text{small}}$. We will show that some such choice produces a satisfiable formula $\psi$ if and only if $\varphi$ is satisfiable.} The reduction proceeds by replacing each $\epsilon \in E$ in $\varphi$ with the $\delta \in \Delta_{\text{small}}$ which imply it, and then checking whether $\mathbb{P}_{\text{small}}$ is a model of the resulting formula $\psi$. (The size constraint on $E$ ensures that $\psi$ can be formed in polynomial time.)  If $\varphi$ is satisfiable, $\Delta_{\text{small}}$ and $\mathbb{P}_{\text{small}}$ exist, so the reduction of $\varphi$ successfully produces a satisfiable formula $\psi$. Conversely, the success of the reduction with the witnesses $\Delta_{\text{small}}$ and $\mathbb{P}_{\text{small}}$ ensures the satisfiability of $\varphi$, since a model of $\psi$ is a model of $\varphi$.

Let us now show that $\mathsf{SAT}_{\text{ind}}$ is $\ETR$-hard. To do this, consider an $\exists \mathbb{R}$-inverse problem instance $\varphi$ with variables $x_1,...,x_n$. It suffices to find a polynomial-time deterministic reduction to a $\SAT_{\mathrm{ind}}$ instance $\psi$. We first describe the reduction and then show that it preserves and reflects satisfiability.

Corresponding to the variables $x_1,...,x_n$, define fresh events $\delta_1,...,\delta_{n} \in \sigma(\mathrm{Prop})$. Define fresh, disjoint events $\delta_1^\prime,...,\delta_n^\prime$. Let $\psi$ be the conjunction of the constraints
\begin{align*}
    \frac{1}{n} \succsim \mathbf{P}(\delta_i) &\succsim  \frac{1}{4n} & \text{ for }i=1,...,n\\
    \mathbf{P}(\delta_i) \independent \textbf{P}(\delta_j) \land \mathbf{P}(\delta_i \land \delta_j) &= \frac{1}{4n^2} & \text{for } x_i \cdot x_j = 1 \text{ in } \varphi\\
    \textbf{P}(\delta_i^\prime) = \textbf{P}(\delta_i)\land \textbf{P}(\delta_j^\prime) = \textbf{P}(\delta_j)\land\textbf{P}(\delta_i^\prime \lor \delta_j^\prime) &= \mathbf{P}(\delta_k) &\text{for } x_i + x_j = x_k \text{ in } \varphi.
\end{align*}

The formula $\psi$ is not yet a formula in $\mathcal{L}_{\text{ind}}$, since it features constraints of the form $~\mathbf{P}(\alpha)~\succsim~\nicefrac{1}{N} $ and $\nicefrac{1}{N}~\succsim~\mathbf{P}(\alpha) $. For any constraint of the form $ \mathbf{P}(\alpha)~\succsim~\nicefrac{1}{N}$, replace $\nicefrac{1}{N}$ with $\mathbf{P}(\epsilon_N)$, replace $\alpha$ with $\alpha \lor \epsilon_N$, and require that the fresh events $\epsilon_1,...,\epsilon_N$ are disjoint with $\textbf{P}(\lor_i \epsilon_i)=1$ and $\textbf{P}(\epsilon_i) = \textbf{P}(\epsilon_j)$ for $i =1,...,N$. Similarly, for any constraint of the form $\nicefrac{1}{N}~\succsim~\mathbf{P}(\alpha) $, replace $\nicefrac{1}{N}$ with $\mathbf{P}(\epsilon_N^\prime)$, replace $\alpha$ with $\alpha \land \epsilon_N^\prime$, and require that the fresh events $\epsilon_1^\prime,...,\epsilon_N^\prime$ are disjoint with $\textbf{P}(\lor_i \epsilon_i^\prime)=1$ and $\textbf{P}(\epsilon_i^\prime) = \textbf{P}(\epsilon_j^\prime)$ for $i =1,...,N$.

This completes our description of the reduction. The map $x_i \mapsto x_i/2n$ sends satisfying solutions of $\varphi$ to those of $\psi$, and the inverse map $\mathbb{P}(\delta_i) \mapsto \mathbb{P}(\delta_i) \cdot 2n$ sends satisfying solutions of $\psi$ to those of $\varphi$. Further, the operations performed are simple, and the introduced events $\delta_i, \delta_i^\prime, \epsilon_i$ and the constraints containing them are short, so the reduction is polynomial-time.
\end{proof}

Some authors have suggested that probability logic with conditional independence terms may be a useful compromise between additive languages built over linear inequalities and the evidently more complex polynomial languages (see, e.g., \citealt{Ivanovska}). However, the above result shows that even allowing simple independence statements among events (not to mention \emph{conditional} independence statements among \emph{sets of variables}) results in $\exists\mathbb{R}$-hardness. Thus while probability logic with conditional independence seems on its face to offer a compromise, it in fact introduces (at least) the complexity of the maximally algebraically expressive languages considered in this paper.

The above result shows that reasoning in the seemingly simpler systems $\mathcal{L}_{\text{ind}}$ and $\mathcal{L}_{\text{confirm}}$ are just as complex as $\mathcal{L}_{\text{poly}}$, because the former systems allow for the expression of independence statements. We conclude with two observations relating to the above result. First,  a minimal extension of $\mathcal{L}_{\text{comp}}$, discussed by \cite{fine1973theories}, remains $\mathsf{NP}$-complete, even though it includes some mention of conditional probability:

\begin{definition}
Fix a nonempty set of proposition letters $\mathsf{Prop}$. The language $\mathcal{L}_{\text{same cond}}$ is defined:
\begin{align*}
    \varphi \in \mathcal{L}_{\text{same cond}} &\iff \varphi = \mathbf{P}(\alpha|\beta) \succsim  \mathbf{P}(\alpha^\prime|\beta) \, | \, \neg \varphi \, | \, \varphi \land \psi & \text{ for any } \alpha, \alpha^\prime, \beta \in \sigma(\mathsf{Prop})
\end{align*}
\end{definition}

\begin{fact}
$\mathsf{SAT}_{\text{same cond}}$ is $\mathsf{NP}$-complete.
\end{fact}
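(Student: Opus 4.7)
The plan is to show $\mathsf{NP}$-hardness by a trivial reduction from Boolean $\mathsf{SAT}$, and then to establish membership in $\mathsf{NP}$ by giving an $\mathsf{NP}$-reduction to $\mathsf{SAT}_{\text{add}}$, invoking Theorem~\ref{additive-complexity} together with the closure of $\mathsf{NP}$ under $\mathsf{NP}$-reductions.

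For hardness, note that for any Boolean formula $\alpha$, the $\mathcal{L}_{\text{same cond}}$-formula $\mathbf{P}(\alpha \mid \top) \succ \mathbf{P}(\bot \mid \top)$ is satisfiable iff $\alpha$ is satisfiable, giving a polynomial-time reduction from Boolean $\mathsf{SAT}$.

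For membership, the key semantic observation is that, by unpacking the atomic clause for $\succsim$ in $\mathcal{L}_{\text{cond}}$, one has $\mathfrak{M} \models \mathbf{P}(\alpha \mid \beta) \succsim \mathbf{P}(\alpha' \mid \beta)$ iff $\mathbb{P}(\semantics{\alpha \wedge \beta})\mathbb{P}(\semantics{\beta}) \geq \mathbb{P}(\semantics{\alpha' \wedge \beta})\mathbb{P}(\semantics{\beta})$, which (because the common factor $\mathbb{P}(\semantics{\beta})$ is non-negative) is equivalent to the disjunction $\mathbb{P}(\semantics{\beta}) = 0 \,\vee\, \mathbb{P}(\semantics{\alpha \wedge \beta}) \geq \mathbb{P}(\semantics{\alpha' \wedge \beta})$. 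Because the same $\beta$ appears on both sides, no residual multiplicative content survives this case split. Given $\varphi \in \mathcal{L}_{\text{same cond}}$, list the distinct conditioning formulas $\beta_1, \dots, \beta_k$ appearing in $\varphi$ (note $k \leq |\varphi|$), and nondeterministically guess a subset $Z \subseteq \{1, \dots, k\}$ encoding which $\beta_i$ are meant to be null. Form $\varphi_Z \in \mathcal{L}_{\text{add}}$ by conjoining $\mathbf{P}(\beta_i) \approx \mathsf{0}$ for $i \in Z$ and $\neg\,\mathbf{P}(\beta_i) \approx \mathsf{0}$ for $i \notin Z$, and by replacing each atomic occurrence $\mathbf{P}(\alpha \mid \beta_i) \succsim \mathbf{P}(\alpha' \mid \beta_i)$ with $\top$ when $i \in Z$ and with $\mathbf{P}(\alpha \wedge \beta_i) \succsim \mathbf{P}(\alpha' \wedge \beta_i)$ when $i \notin Z$.

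This transformation is deterministic polynomial-time once $Z$ is fixed, so the whole procedure is an $\mathsf{NP}$-reduction. By the semantic equivalence above, $\varphi$ is satisfiable iff $\varphi_Z$ is satisfiable for some choice of $Z$. Since $\mathsf{SAT}_{\text{add}} \in \mathsf{NP}$ by Theorem~\ref{additive-complexity}, and $\mathsf{NP}$ is trivially closed under $\mathsf{NP}$-reductions, we conclude $\mathsf{SAT}_{\text{same cond}} \in \mathsf{NP}$, completing the proof. There is no serious obstacle here: the only point to be careful about is handling negated atoms, but these simply become strict inequalities $\mathbf{P}(\alpha \wedge \beta_i) \succ \mathbf{P}(\alpha' \wedge \beta_i)$ for $i \notin Z$ (and force $i \notin Z$, since $\neg \top$ is unsatisfiable), so the same construction goes through uniformly on literals of $\varphi$ without any prior conversion to disjunctive normal form.
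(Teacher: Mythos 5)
Your proof is correct, but the membership direction is more elaborate than it needs to be, and it is worth seeing why. You correctly unpack the semantics of $\mathbf{P}(\alpha\mid\beta) \succsim \mathbf{P}(\alpha'\mid\beta)$ as $\mathbb{P}(\semantics{\alpha\wedge\beta})\,\mathbb{P}(\semantics{\beta}) \geq \mathbb{P}(\semantics{\alpha'\wedge\beta})\,\mathbb{P}(\semantics{\beta})$ and split into the cases $\mathbb{P}(\semantics{\beta}) = 0$ and $\mathbb{P}(\semantics{\beta}) > 0$, handling the null case with a nondeterministic guess $Z$ and then appealing to closure of $\mathsf{NP}$ under $\mathsf{NP}$-reductions. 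But the case split is unnecessary: when $\mathbb{P}(\semantics{\beta}) = 0$ we also have $\mathbb{P}(\semantics{\alpha\wedge\beta}) = \mathbb{P}(\semantics{\alpha'\wedge\beta}) = 0$, so the unconditional comparison $\mathbb{P}(\semantics{\alpha\wedge\beta}) \geq \mathbb{P}(\semantics{\alpha'\wedge\beta})$ holds there as well, and your disjunction collapses to its second disjunct. Hence each atom $\mathbf{P}(\alpha\mid\beta) \succsim \mathbf{P}(\alpha'\mid\beta)$ has, in \emph{every} model, the same truth value as $\mathbf{P}(\alpha\wedge\beta) \succsim \mathbf{P}(\alpha'\wedge\beta)$, and the paper simply performs this substitution throughout $\varphi$, obtaining a deterministic polynomial-time, model-preserving reduction to $\mathsf{SAT}_{\text{add}}$ — no guess, no $\mathsf{NP}$-reduction machinery, and no special care needed for negated atoms (which is also why your closing remark about negations ``forcing $i \notin Z$'' is a red herring: the atom-level equivalence is compositional under Boolean connectives). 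Your hardness argument, reducing Boolean satisfiability directly via $\mathbf{P}(\alpha\mid\top) \succ \mathbf{P}(\bot\mid\top)$, is essentially the paper's (which routes through the observation that $\mathcal{L}_{\text{same cond}}$ subsumes $\mathcal{L}_{\text{comp}}$ and the hardness of $\mathsf{SAT}_{\text{comp}}$). What your approach buys is robustness — it would survive even if the null-conditioning case did not degenerate — but here that generality is not needed, and the simpler deterministic reduction gives a cleaner statement.
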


\begin{proof}
Hardness follows immediately from Theorem~\ref{additive-complexity} and the observation that $\mathcal{L}_{\text{same cond}}$ is at least as expressive as $\mathcal{L}_{\text{comp}}$. To show completeness, take any $\varphi \in \mathcal{L}_{\text{same cond}}$, and let $\psi$ be the result of replacing each term $\mathbf{P}(\alpha | \gamma)$ in $\varphi$ with $\mathbf{P}(\alpha \land \gamma)$. We claim that $\varphi$ and $\psi$ are equisatisfiable. Indeed, for any measure $\mathbb{P}$, we have:
\[
 \mathbb{P}(\alpha \land \gamma) \geq \mathbb{P}(\beta \land \gamma) \iff  \mathbb{P}(\alpha | \gamma) \geq \mathbb{P}(\beta| \gamma).
\]
Thus the inequalities mentioned in $\varphi$ hold precisely when those mentioned in $\psi$ hold.
\end{proof}

Second, whereas the above theorem characterizes the complexity of reasoning about independence among events, the following result due to \cite{lang2002conditional} characterizes the complexity of reasoning about independence among two random variables:

\begin{theorem}[\cite{lang2002conditional}]
Determining independence of two random variables is complete for the complexity class $\mathsf{coDP}$, the complement of $\mathsf{DP}$, the class of all languages $\mathcal{L}$ such that $\mathcal{L} = \mathcal{L}_1 \cap \mathcal{L}_2$, where $\mathcal{L}_{1}$ is in $\mathsf{NP}$ and $\mathcal{L}_{2}$ is in $\mathsf{coNP}$ (the complement of $\mathsf{NP}$).
\end{theorem}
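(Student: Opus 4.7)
The plan starts with fixing the problem's input: we regard an instance as a propositional formula $\varphi$ together with disjoint sets of atoms $\mathcal{X}, \mathcal{Y}$, which induce two random variables $X, Y$ (their joint valuations) under the uniform measure on models of $\varphi$. Under this reading, dependence has both a structural component (the supports fail to form a product) and a numerical component (multiplicities in the joint count fail to match the product of marginal counts), and the central insight is that the problem can be packaged so that both fit into the $\mathsf{DP}$-level shape.

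For membership in $\mathsf{coDP}$, I would show that dependence lies in $\mathsf{DP}$ by exhibiting languages $L_{1}\in\mathsf{NP}$ and $L_{2}\in\mathsf{coNP}$ whose intersection is exactly the set of dependent instances. The $\mathsf{NP}$ side $L_{1}$ certifies positivity of both marginals at some candidate value pair $(x,y)$ by guessing two witness models of $\varphi$; the $\mathsf{coNP}$ side $L_{2}$ certifies that the joint fails to factor at $(x,y)$ by ruling out any witness that would equalize joint and product. Careful encoding of the candidate pair $(x,y)$ into the input (for example by extending $\varphi$ with fresh atoms that commit to $(x,y)$, or by rewriting $\varphi$ relative to a guessed fragment) is used so that the two certificates target the same witness without being jointly guessed at verification time.

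For hardness I reduce from $\mathsf{UNSAT} \cup \mathsf{SAT}$, a canonical $\mathsf{coDP}$-complete problem: given formulas $(\varphi_{1}, \varphi_{2})$ on disjoint vocabularies, decide whether $\varphi_{1}$ is unsatisfiable or $\varphi_{2}$ is satisfiable. I construct $\Phi$, $\mathcal{X}$, $\mathcal{Y}$ with a designated pair $(x_{0}, y_{0})$ serving as the unique candidate dependence witness, and arrange that $P(X = x_{0}) > 0$ iff $\mathsf{SAT}(\varphi_{1})$, $P(Y = y_{0}) > 0$ unconditionally, and $P(X = x_{0}, Y = y_{0}) > 0$ iff $\mathsf{SAT}(\varphi_{2})$. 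Padding with fresh atoms that are free and independent of the $\varphi_{i}$ forces every other joint value to factor. Dependence at $(x_{0}, y_{0})$ then reads as $\mathsf{SAT}(\varphi_{1}) \land \neg \mathsf{SAT}(\varphi_{2})$, so global independence captures the target disjunction $\neg \mathsf{SAT}(\varphi_{1}) \lor \mathsf{SAT}(\varphi_{2})$.

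The main obstacle is the membership step: naively, dependence reads ``$\exists (x,y) : \text{NP-condition}(x,y) \land \text{coNP-condition}(x,y)$,'' which only places it in $\Sigma_{2}$ rather than in $\mathsf{DP}$. The key delicate move is to exploit the specific form of the succinct representation to decouple the shared witness $(x,y)$ from the NP and coNP guesses, so that $L_{1}$ and $L_{2}$ can verify their halves without jointly committing to a guessed pair. A secondary difficulty lies in the hardness reduction, where the padding must be engineered so that no unintended secondary dependence witnesses appear outside $(x_{0}, y_{0})$, requiring careful control over how padding atoms interact with $\varphi_{1}$ and $\varphi_{2}$ and over the counting multiplicities on the non-designated pairs.
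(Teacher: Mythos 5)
First, a point of order: the paper does not prove this statement. It is quoted as a known result of Lang, Liberatore and Marquis (2002) purely to situate the authors' own $\exists\mathbb{R}$-completeness theorem, so there is no in-paper argument to compare yours against; your sketch has to stand on its own. Judged on those terms, it has a genuine gap, and it is exactly the one you flag yourself. Writing dependence as $\exists (x,y)\,[\,\text{NP-condition}(x,y) \wedge \text{coNP-condition}(x,y)\,]$ only places the problem in $\Sigma_2^p$; the entire mathematical content of the membership half of the theorem is whatever collapses this to $\mathsf{DP}$, and your ``key delicate move'' of decoupling the shared witness $(x,y)$ from the two certificates is announced but never carried out. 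As written, your $L_1$ and $L_2$ are not well-defined languages over the original instance (each refers to a pair $(x,y)$ that is not part of the input), so the membership argument does not yet exist.

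Second, your formalization of the problem is very likely not the one for which the theorem holds, and under your reading the membership claim is implausible on its face. You take independence to be probabilistic independence of $X$ and $Y$ under the uniform measure on the models of $\varphi$; that condition is the family of counting identities $\#(\varphi\wedge x\wedge y)\cdot\#(\varphi) = \#(\varphi\wedge x)\cdot\#(\varphi\wedge y)$, and deciding equalities of products of model counts lives in exact-counting classes far above the Boolean hierarchy, not in $\mathsf{coDP}$. The cited result concerns the logical (Darwiche-style) notion of independence in propositional logic, under which $X$ and $Y$ are independent with respect to $\Sigma$ exactly when the projection of the models of $\Sigma$ onto $X\cup Y$ is the product of the two marginal projections, i.e.\ satisfiability of $\Sigma\wedge x$ and of $\Sigma\wedge y$ jointly imply satisfiability of $\Sigma\wedge x\wedge y$. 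Your hardness gadget --- a designated pair $(x_0,y_0)$ with membership of $x_0$ in the $X$-marginal tied to $\mathrm{SAT}(\varphi_1)$ and membership of $(x_0,y_0)$ in the joint projection tied to $\mathrm{SAT}(\varphi_2)$ --- is the right shape for that logical notion, but in your probabilistic reading even the padding step (``every other joint value factors'') would require verifying count identities you do not establish. I would restate the problem in the support-based logical form and then confront the $\Sigma_2^p$-versus-$\mathsf{DP}$ question directly; that is where the substance of the Lang--Liberatore--Marquis proof lies.
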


\noindent \cite{lang2002conditional} also characterize the complexity of several other tasks concerning the (conditional) independence of sets of random variables.

\section{Summary: The Additive-Multiplicative Divide}\label{section: Additive-Multiplicative Divide}

We identified an important dividing line in the space of probability logics, based on the distinction between purely additive and multiplicative systems. Additive systems can encode reasoning about systems of linear inequalities; multiplicative systems can encode reasoning about polynomial inequality systems that include at least quadratic constraints. The distinction between additive and multiplicative systems robustly tracks a difference in computational complexity: while the former are $\mathsf{NP}$-complete, the latter are $\exists\mathbb{R}$-complete. As a consequence, inference involving (implicitly) multiplicative notions is inherently more complex (assuming $\mathsf{NP}\neq \exists\mathbb{R}$): this applies to various intuitively `qualitative' systems for reasoning 
about independence ($\mathcal{L}_{\text{ind}}$), confirmation ($\mathcal{L}_{\text{confirm}}$), or comparisons of conditional probability ($\mathcal{L}_{\text{cond}}$). 

As we saw, proving completeness for the additive and multiplicative systems involves different methods. While completeness for additive systems relies on linear algebra (and hyperplane separation theorems or variable elimination methods), the natural mathematical setting for multiplicative systems is semialgebraic geometry (and completeness relies on versions of the real Positivstellensatz). 

Importantly, both in the additive and multiplicative settings, systems with explicitly `numerical' operations are more expressive and admit finite axiomatizations, while incurring no cost in complexity. By contrast, even the most paradigmatically `qualitative' logic of unconditional comparative probability ($\mathcal{L}_{\text{comp}}$) is not finitely axiomatizable.\footnote{A similar phenomenon occurs in other non-numerical systems for probabilistic reasoning. See, for example, results on the non-finite axiomatizability of conditional independence for discrete random variables \citep{Studeny} and for Gaussian random variables \citep{Sullivant}.} Thus, from a logical perspective, there is little to be gained from restricting attention, in applications of probability logics, to syntactically `qualitative' systems without arithmetical operations.

\begin{figure}[t]
\begin{center}
    \begin{tikzpicture}
      \node[vertex] (b0) at (-2,0.25) {(additive)};
       \node[vertex] (b1) at (-2,1.75) {(multiplicative)};
        \node[vertex] (b3) at (4,0.25) {$\mathsf{NP}$-complete};
         \node[vertex] (b4) at (4,1.75) {$\exists \mathbb{R}$-complete};

           \node[vertex] (c1) at (-3,1) {};
         \node[vertex] (c2) at (5,1) {};
         
            \node[vertex] (d1) at (1,3) {};
         \node[vertex] (d2) at (1,-1) {};
         
             \node[vertex] (d3) at (-2,2.5) {\textcolor{blue}{\small (no explicit arithmetic)}};
         \node[vertex] (d4) at (3.5,2.5) {\textcolor{blue}{\textcolor{blue}{\small (explicit arithmetic)}}};
      
    \node[vertex] (a1) at (0,0) {$\mathcal{L}_{\mathrm{comp}}$};
    \node[vertex] (a2) at (0,2) {$\mathcal{L}_{\mathrm{cond}}$};
    \node[vertex] (a3) at (2,0) {$\mathcal{L}_{\mathrm{add}}$};
    \node[vertex] (a4) at (2,2) {$\mathcal{L}_{\mathrm{poly}}$};
    
    \draw[edge] (a1) -- (a2);
    \draw[edge] (a1) -- (a3);
    \draw[edge] (a2) -- (a4);
    \draw[edge] (a3) -- (a4);
    
     \draw[thick, dashed] (c1) -- (c2);
 \draw[dashed, blue] (d1) -- (d2);
    \end{tikzpicture}
\end{center}
\caption{The additive-multiplicative divide.}
\end{figure}
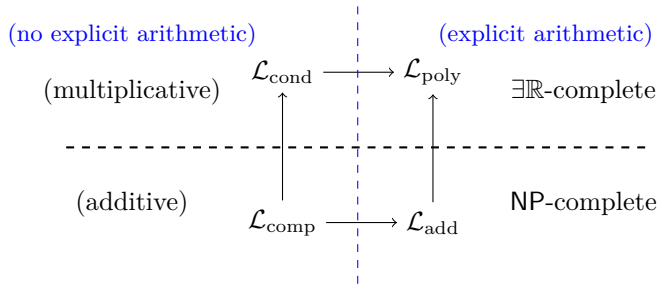

These results also illustrate how ease of elicitation and complexity of inference might come apart. As we noted, the use of comparative probability is sometimes motivated by the view that `qualitative' judgments are more intuitive, or easier to elicit, than explicitly `quantitative' ones. While these claims are somewhat difficult to substantiate (see the discussion below in Section \ref{section:empirical}), our results give a concrete sense in which intuitions about the ease of elicitation of certain comparative judgments do not reflect the complexity of \emph{inference} involving these judgments. Consider, for example, the case of $\mathcal{L}_{\text{cond}}$ and $\mathcal{L}_{\text{add}}$.  While at first blush there may be something more immediate about comparisons of conditional probabilities that are not explicitly numerical, as opposed to comparisons expressible in $\mathcal{L}_{\text{add}}$ (`is $A$ twice as likely as $B$?'), our results suggest that reasoning with the former is more complex than reasoning with the latter. 

While the distinction between additive and multiplicative systems is an informative dividing line that is useful in classifying the landscape of probability logics, investigating this divide also illustrates that the very distinction between qualitative and quantitative  reasoning remains somewhat elusive. Certainly, \emph{prima facie} natural ways to formulate the distinction, based on the simplicity and intuitiveness of comparative judgments, or on the explicit presence of arithmetical operators, do not seem to capture any clear or robust distinction that tracks properties of logical interest such as complexity, expressivity, and axiomatizability. We are thus left with the question of how to give concrete substance to the often invoked, but never delineated, qualitative/quantitative distinction: are there any structural properties of inference that are characteristic of \emph{qualitative} reasoning in probabilistic contexts? We turn to this question next.

\section{What is the Quantitative-Qualitative Distinction?}\label{section: Quantitative-Qualitative Distinction}

Before concluding we briefly consider the larger conceptual questions with which we began. How might we understand the prevalent distinction between quantitative and qualitative formulations of probabilistic principles and inference patterns, particularly in light of the landscape of systems we have explored in the present work? We begin by entertaining several suggestions from the literature. 

\subsection{Previous Suggestions} As briefly discussed in the introduction, gestures toward a distinction between qualitative and quantitative formulations of probability can be found throughout the literature, going back at least as far as \cite{Keynes1921}. In the seminal work by de Finetti on comparative probability, the express goal was to `start out with only qualitative notions' before `one arrives at
a quantitative measure of probability' \citep[p. 101]{Finetti1937}. However, while the distinction often arises as informal motivation, there has been less explicit discussion of what exactly the distinction might be. A survey of the literature reveals two families of proposals. \emph{Syntactic} proposals locate the distinction in formal syntax, whereas \emph{semantic} proposals focus instead on the variety of models a system admits. 
\subsubsection{Syntactic Proposals}
The passage quoted above from \cite{Finetti1937} invites a view on which the distinction tracks whether \emph{numbers}, or more generally \emph{arithmetical concepts}, appear explicitly in our formal language.  Comparative probability languages like $\mathcal{L}_{\text{comp}}$ and perhaps also $\mathcal{L}_{\text{cond}}$, on this view, are typically qualitative, since there are no numerical terms or operations. In a recent paper,  \cite{Delgrande} state this view  clearly: \begin{quote}
 `What distinguishes qualitative from quantitative probability (truth valued) logics is that qualitative probability logics do not employ quantities or arithmetic operations in the syntax, and the informal reading of the qualitative probability formulas do not require a quantitative interpretation.'
\end{quote} On this picture, comparative judgments do not involve any explicit reference to numbers, so such systems would count as qualitative. By contrast, a language like $\mathcal{L}_{\text{add}}$ would presumably be considered quantitative, since it involves an explicit addition-like operator.\footnote{Notably, the system introduced by \cite{Delgrande} also employs an `addition-like' operator, allowing for a simple finite axiomatization of what is essentially our language $\mathcal{L}_{\text{add}}$. The target in this paper rather appears to be the extension from \cite{fagin1990logic} that includes explicit constant terms for integers.}

While there may be extra-logical reasons to focus attention on qualitative systems in this sense, such restrictions come at a logical price. At no increase in reasoning complexity, the presence of arithmetical functions affords simple finite axiomatizations, as well as greater expressivity. We return to potential extra-logical, viz. empirical, motivations below in \S\ref{section:empirical}.

\subsubsection{Semantic Proposals}
An alternative way of drawing the distinction appeals not to the syntax, but to the semantics of the system. The quotation from \cite{Delgrande} also gestures at such a view, according to which qualitative systems do not \emph{require} a quantitative interpretation. The suggestion seems to be that qualitative systems are sufficiently general that they also admit interpretations that do not involve numbers in any explicit way.

All of the systems we studied here are interpreted relative to a probability space $(\Omega,\mathcal{F},\mathbb{P})$. But some of them also admit alternative interpretations. Take, for instance, $\mathcal{L}_{\text{add}}$. Consider any totally ordered commutative monoid $(M;\oplus,\sqsupseteq)$ that also satisfies the following two conditions:
\begin{enumerate}
    \item Double Cancellation: if $a \oplus e \sqsupseteq c\oplus f$ and $b \oplus f \sqsupseteq d \oplus e$, then $a\oplus b \sqsupseteq c \oplus d$;
    \item Contravenience: if $a \oplus b \sqsupseteq c \oplus d$ and $d \sqsupseteq b$, then $a \sqsupseteq c$.
\end{enumerate}
Then our earlier results imply:
\begin{corollary} \label{cor:gen}
Consider any mapping $\semantics{\cdot}$ from $\sigma(\mathsf{Prop})$ to  a totally ordered commutative monoid $(M;\oplus,\sqsupseteq)$. If $\semantics{\cdot}$ also satisfies the following three conditions: 
\begin{enumerate}
    \item Monotonicity: $\semantics{\alpha} \sqsupseteq \semantics{\beta}$, whenever $\models \beta \rightarrow \alpha$,
    \item Non-triviality: $\semantics{\bot} \not\sqsupseteq \semantics{\top}$, and 
    \item Additivity: $\semantics{\alpha \vee \beta} = \semantics{\alpha} \oplus \semantics{\beta}$, whenever $\models \beta \rightarrow \neg \alpha$
\end{enumerate} then we will obtain completeness for the system $\mathsf{AX}_{\text{add}}$ (and hence also $\mathsf{AX}_{\text{comp}}$).
\end{corollary}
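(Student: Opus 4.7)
The plan is to exploit the fact that probability spaces themselves constitute a particular instance of the envisaged monoid-valued semantics: take $(M,\oplus,\sqsupseteq)=(\mathbb{R}_{\geq 0},+,\geq)$ with $\semantics{\cdot}$ given by any probability measure. Standard facts verify Monotonicity, Non-triviality, and Additivity, while the usual properties of real addition give Double Cancellation and Contravenience. Consequently, any $\mathcal{L}_{\text{add}}$-formula valid in every monoid model of the stated kind is \emph{a fortiori} valid in every probability model, so Theorem~\ref{thm:add-complete} delivers provability in $\mathsf{AX}_{\text{add}}$. The work therefore reduces to verifying that each axiom of $\mathsf{AX}_{\text{add}}$ remains \emph{sound} in the generalized setting, once the denotation of a complex term $\mathsf{a}+\mathsf{b}$ is defined recursively via $\semantics{\mathsf{a}}\oplus\semantics{\mathsf{b}}$.

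Most soundness checks amount to a direct match with the stated conditions. For $\mathsf{AX}_{\text{base}}$: $\mathsf{Lin}$ follows from totality of $\sqsupseteq$, $\mathsf{Dist}$ from Monotonicity, and $\mathsf{NonDeg}$ from Non-triviality. The equations $\mathsf{Assoc}$ and $\mathsf{Comm}$ hold because $(M,\oplus)$ is a commutative monoid, while $\mathsf{2Canc}$ and $\mathsf{Contr}$ are exactly the two conditions stipulated on the monoid. Finally, $\mathsf{Add}$ follows from Additivity applied to the propositional decomposition $\alpha\equiv(\alpha\wedge\beta)\vee(\alpha\wedge\neg\beta)$, whose disjuncts are mutually unsatisfiable.

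The one case requiring more than an invocation is $\mathsf{Zero}$, since it must hold for arbitrary terms $\mathsf{a}\in T_{\text{add}}$, not merely for $\mathsf{a}=\mathbf{P}(\alpha)$. An easy induction on term structure handles this: Additivity applied to $\alpha\equiv\alpha\vee\bot$ (whose disjuncts are trivially incompatible) gives $\semantics{\alpha}=\semantics{\alpha}\oplus\semantics{\bot}$ in the base case, and associativity propagates the identity through complex terms, since $(\semantics{\mathsf{a}}\oplus\semantics{\mathsf{b}})\oplus\semantics{\bot}=\semantics{\mathsf{a}}\oplus(\semantics{\mathsf{b}}\oplus\semantics{\bot})=\semantics{\mathsf{a}}\oplus\semantics{\mathsf{b}}$. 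The extension to $\mathsf{AX}_{\text{comp}}$ proceeds along the same lines: completeness transfers by the same reduction to probability models, and $\mathsf{Quasi}$ is sound in the monoid setting because Additivity together with the $\mathsf{1Canc}$-style cancellation (itself derivable from Contravenience and totality) allows one to cancel the common summand $\semantics{\alpha\wedge\beta}$ that appears in both $\semantics{\alpha}$ and $\semantics{\beta}$.

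The main obstacle, such as it is, is conceptual rather than technical: one must be willing to reinterpret comparisons $\mathsf{a}\succsim\mathsf{b}$ in an abstract ordered monoid, at which point every axiom corresponds either to a monoid identity or to one of the stipulated structural properties, and the completeness of $\mathsf{AX}_{\text{add}}$ for probability measures lifts to the full class of monoid models by simple inclusion.
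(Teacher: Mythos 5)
Your proposal is correct and follows essentially the same route as the paper: soundness is checked axiom-by-axiom against the monoid conditions (the base axioms and $\mathsf{Add}$, $\mathsf{2Canc}$, $\mathsf{Contr}$ from Monotonicity, Non-triviality, Additivity, and the two stipulated cancellation properties; the remaining equational axioms from the totally ordered commutative monoid structure), and completeness is inherited from Theorem~\ref{thm:add-complete} because the (rational-valued) probability countermodels it produces are themselves instances of the generalized monoid semantics. The paper's own remark is just a terser version of this argument, routing the countermodel through $(\mathbb{Q}^+;+,\geq)$ rather than through probability models, which is an immaterial difference.
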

 The assumptions above are enough to guarantee soundness of $\mathsf{AX}_{\text{base}}$, as well as axioms $\mathsf{Add}$, $\mathsf{2Canc}$ and $\mathsf{Contr}$. The remaining three axioms of $\mathsf{AX}_{\text{add}}$ follow from the fact that $M$ is a totally ordered commutative monoid. Completeness will follow immediately, since we always have a countermodel in the non-negative rationals $(\mathbb{Q}^+;+,\geq)$, by Theorem \ref{thm:add-complete}. The same obviously applies to the smaller system,  $\mathsf{AX}_{\text{comp}}$.\footnote{This observation mirrors the classic result of \cite{Gurevich}, showing that every two (totally) ordered Abelian groups have the same existential (or universal) first-order theory.}

However, there are also other commutative monoids that do not explicitly involve numbers but still satisfy Double Cancellation and Contravenience. For example, we could take $M = \{a\}^*$ to be the set of all finite strings over a unary alphabet (the `free monoid' over $\{a\}$), with $\oplus$ string concatenation and $\sqsupseteq$ the relation of string containment. Since $x \mapsto kx$ for any positive integer $k$ is an embedding of $(\mathbb{Q}^+;+,\geq)$ to itself, we can always find countermodels in $(\mathbb{N};+,\geq)$, which is in turn isomorphic to $(\{a\}^*;\oplus,\sqsupseteq)$, so we have:
\begin{corollary} $\mathsf{AX}_{\text{add}}$ (and  $\mathsf{AX}_{\text{comp}}$) is complete with respect to  interpretations in $(\{a\}^*;\oplus,\sqsupseteq)$.
\end{corollary}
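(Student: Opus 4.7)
The plan is to leverage the completeness of $\mathsf{AX}_{\text{add}}$ over $(\mathbb{Q}^+;+,\geq)$ established in Theorem \ref{thm:add-complete}, together with the fact that $(\{a\}^*;\oplus,\sqsupseteq) \cong (\mathbb{N};+,\geq)$ via $a^n \mapsto n$, and a rescaling argument to pass from rational-valued to natural-number-valued models.

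First, I would verify soundness. Observe that $(\{a\}^*;\oplus,\sqsupseteq)$ is a totally ordered commutative monoid (with identity the empty string) satisfying both Double Cancellation and Contravenience: these properties are inherited from $(\mathbb{N};+,\geq)$ via the isomorphism $a^n \mapsto n$, and they hold in $\mathbb{N}$ since they hold in $\mathbb{Q}^+$ (of which $\mathbb{N}$ is a sub-monoid closed under the relevant operations). Soundness with respect to interpretations in $(\{a\}^*;\oplus,\sqsupseteq)$ then follows directly from Corollary~\ref{cor:gen}.

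For completeness, I would argue contrapositively: suppose $\varphi$ is not derivable in $\mathsf{AX}_{\text{add}}$. By Theorem~\ref{thm:add-complete}, $\neg\varphi$ is satisfiable in a standard probability model, and inspection of the proof there shows that the countermodel can be chosen to assign a rational value to each state description $\delta \in \Delta_\varphi$. Let $N$ be a common denominator for these rationals and consider the scaled interpretation $\semantics{\cdot}' = N\cdot\semantics{\cdot}$, taking values in $\mathbb{N}$. The map $x \mapsto Nx$ is an order- and addition-preserving embedding of $(\mathbb{Q}^+;+,\geq)$ into itself, so the monotonicity, non-triviality, and additivity conditions of Corollary~\ref{cor:gen} are transported from the original model, and every subformula of $\neg\varphi$ — being built solely from $\oplus$, $\sqsupseteq$, and Boolean connectives — retains its truth value under $\semantics{\cdot}'$. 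Translating via the isomorphism $n \mapsto a^n$ yields an interpretation in $(\{a\}^*;\oplus,\sqsupseteq)$ satisfying $\neg\varphi$, establishing completeness. The argument for $\mathsf{AX}_{\text{comp}}$ is identical, since $\mathcal{L}_{\text{comp}} \subseteq \mathcal{L}_{\text{add}}$.

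There is no serious obstacle here; the only mildly subtle point worth flagging is that Corollary~\ref{cor:gen} does not require $\semantics{\top}$ to play the role of a distinguished unit in $M$ — it may be any element — so the passage from a normalized probability model (where $\semantics{\top}$ is $1$) to an interpretation in $(\{a\}^*;\oplus,\sqsupseteq)$ (where $\semantics{\top}$ is the string $a^N$) requires no additional adjustment beyond the scaling.
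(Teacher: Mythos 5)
Your proof is correct and follows essentially the same route as the paper: take the rational-valued countermodel guaranteed by Theorem~\ref{thm:add-complete}, scale by a common denominator via the order- and addition-preserving embedding $x \mapsto Nx$ to land in $(\mathbb{N};+,\geq)$, and transport through the isomorphism with $(\{a\}^*;\oplus,\sqsupseteq)$. The extra care you take with soundness and with the role of $\semantics{\top}$ is fine but not needed beyond what the paper already records.
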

On this alternative interpretation, the `probability' of an event is taken to be simply a string in unary.

To the extent that the multiplicative systems do not enjoy such alternative interpretations, our division appears to harmonize with this distinction. We leave as an open question whether systems like $\mathsf{AX}_{\text{poly}}$ can be interpreted in models that are not (isomorphic to some sub-semiring of) the real numbers or the unit interval.

A more radical way of drawing the distinction is to insist not that a system possess non-numerical models in order to be qualitative, but that the system have no straightforward models that \emph{are} numerical. For instance, in the literature on uncertain reasoning, systems that license inferences like $A,B  \; |\!\!\!\sim A \wedge B$ have been deemed qualitative, since they correspond to intuitive, non-numerical patterns, but are incompatible with straightforward probabilistic interpretations (e.g., according to which $A$ is accepted just in case $\mathbb{P}(A)>\theta$ for some threshold $\theta$). For instance, \cite{HawthorneMakinson} give voice to this perspective when they write:
\begin{quote} `Broadly speaking, there are two ways of approaching the formal analysis
of uncertain reasoning: quantitatively, using in particular probability relationships, or by means of qualitative criteria. As is widely recognized, the
consequence relations that are generated in these two ways behave quite
differently.' \end{quote} Systems of non-monotonic reasoning can often be given quantitative probabilistic interpretations (see, e.g., \citealt{Pearl1989}), and there are various ways of ameliorating the inferential tensions in these contexts \citep{Leitgeb,Mierzewski}. But such resolution usually comes at the expense of quantitative granularity typical of numerical probabilistic reasoning. In any case, on this way of drawing the distinction, none of the systems we have studied in the present work would count as qualitative, even the basic comparative system $\mathsf{AX}_{\text{comp}}$.

\subsection{Empirical Issues} \label{section:empirical}
Some of the motivations for less quantitative formulations of probability come not from issues of logic and complexity, but rather from empirical concerns. A common thought is that eliciting comparative judgments may be in some way more tractable. Relatedly, numerous authors have suggested that quantitative judgments may not always be empirically meaningful in the same way that qualitative judgments may be. 

The intuition behind this motivation is clear enough. Comparative judgments introspectively appear easier to make than numerical comparisons, and echoing the earlier suggestions by Keynes, Koopman and others, some more recent authors have concluded that they are in some sense more psychologically `real' (e.g., \citealt{Stefansson}).  Indeed, it has long been appreciated that binary comparative judgments in general can be more stable or reliable than absolute judgments, even to the extent that some recent researchers advocate replacing the latter with the former to mitigate noise in judgment \citep{Kahneman}. Such judgments play a central role in algorithms for probabilistic inference as well, under the assumption that probabilistic comparisons---especially between hypothesis that are `nearby' in a larger state space---are relatively easy \citep[p. 887]{sanborn2016bayesian}.

Similar arguments about the ease and naturalness of qualitative judgments have been marshalled for conditional independence relationships, purportedly arising from even more fundamental qualitative causal intuitions \citep[p. 21]{Pearl2009}.


\subsubsection{Direct Measurements of Probability}

For the specific problem of eliciting subjective probabilities, not only is stability across time and contexts important---it is also significant that the whole pattern of attitudes be consistent with at least one probabilistic representation in the first place. It was pointed out already by \cite{Suppes1974} that verifying this in the purely comparative setting, even for a small number of basic events, involves a combinatorial explosion of pairs to check. Worse yet, we now have overwhelming evidence (e.g., from the long line of work starting with \citealt{TverskyKahneman1974}) that ordinary judgments about comparative probability routinely violate even the most basic axioms, such as $\mathsf{Dist}$. Similar experimental patterns confront the logic of (conditional) independence (e.g., \citealt{Rehder}). These considerations, at the very least, put pressure on any claim to the effect that qualitative judgments enjoy special empirical tractability. 

Once we abandon the ambition of eliciting coherent, fully specified patterns of judgments, the numerical/non-numerical distinction again begins to appear somewhat arbitrary. Contemporary methods for probability elicitation tend to be partial, and they handle numerical judgments in a similar way to their treatment of purely comparative judgments (e.g., \citealt{Lambert}). In the behavioral sciences, probabilities are commonly measured on continuous sliding scales, or on 7-point Likert scales (e.g., `extremely unlikely' to `extremely likely'), often with a background assumption that such responses will be noisy reflections of an underlying psychological mechanism, estimable from many samples of the population (see, e.g., \citealt{LuceSuppes,Icard2016,sanborn2016bayesian} for discussion). From this perspective, comparative judgments may  tend to be robust because they are often relatively insensitive to random perturbations. At the same time, we might expect a claim that `$A$ and $B$ are \emph{equally likely}' to be more fickle than a claim like, `$C$ is \emph{more than twice as likely} as $D$'. So there again may be nothing distinctive  about non-numerical comparisons in this regard. 

\subsubsection{Indirect Measurement via Preference}

A prominent way of thinking about subjective probability takes it to be not a primitive notion, but rather derivative from an agent's preferences over `gambles' or `acts'.  On this alternative view such preferences, as  revealed in choice behavior, form the empirical basis of probability attributions. As long as the pattern of choices satisfies certain sets of axioms, a representation in terms of (fully quantitative, and typically unique) probabilities together with utilities is guaranteed (e.g., as in Savage's \citeyear{Savage} classic axiomatization). These axioms tend to be quite strong, and there have been countless criticisms of them, on both descriptive and normative grounds. Though interesting variants and weakenings have been proposed (e.g., \citealt{Joyce1999,GaifmanLiu}), it appears that the assumptions required will be at least as demanding as those made in the purely probabilistic case. 

Indeed, in the preferential setting there is a precise sense in which comparative probability judgments emerge as a special case of uncertain gambles. We would say that an agent considers $\alpha$ more likely than $\beta$ if they prefer a gamble that returns a good outcome in case of $\alpha$ to one that returns the same good outcome in case of $\beta$.  Axioms can be stated to guarantee that the resulting order will be probabilistically representable (see \citealt[\S 5.2.4]{Krantz1971}), though, yet again, even the most basic of these have been questioned. For instance, in Ellsberg's \citeyearpar{Ellsberg} celebrated counterexample to Savage's sure-thing principle, people tend to prefer a gamble on $A$ to one on $B$, while simultaneously preferring a gamble on $B \vee C$ to one on $A \vee C$, where $C$ is incompatible with $A$ and with $B$. This leads to a blatant violation of quasi-additivity ($\mathsf{Quasi'}$).

Nonetheless, if one is willing to weaken the axioms required to guarantee probabilistic representability (or simply disregard their systematic violation), these basic gambles can be elaborated to extract probability judgments with greater numerical content, including ratio comparisons (typical of $\mathcal{L}_{\text{add}}$ and beyond). The basic idea, following \cite{Ramsey} (see also \citealt{DavidsonSuppes,ElliotMind}, \emph{inter alia}), is to begin with a way of eliciting meaningful utilities for outcomes, and then to use these utilities to measure probability judgments. For instance, if it can be determined that an agent judges outcome $O_2$ be to at least twice as desirable as $O_1$, then someone who prefers a gamble returning $O_1$ if $\alpha$, to one returning $O_2$ if $\beta$, might be taken to judge $\alpha$ more than \emph{twice as likely} as $\beta$.

Could such methods be employed to determine not just additive constraints on probabilities, but also multiplicative constraints? Supposing we could establish that the utility of $O_2$ were greater than that of $O_1$ \emph{squared}, for instance, we might conclude that $\alpha$ has probability greater than the square of $\beta$'s probability. The problem with this suggestion is that most approaches to utility, including those that descend from \cite{Ramsey}, assume that utilities are meaningful only up to linear transformation, so such conclusions would not be well defined. Thus, even in this context of empirical questions, we see that the natural dividing line may not be numerical or quantitative constraints per se, but rather additive versus multiplicative constraints.

\section{Conclusion and Open Questions} \label{section:conclusion}

Through a broad distinction between additive and multiplicative formalisms for probabilistic reasoning, we have explored the landscape of probability logics with respect to fundamental questions about expressivity, computational complexity, and axiomatization. What emerges is a remarkably robust divide that cross-cuts some tempting ways of dividing the space based on intuitions about quantitative versus qualitative representation and reasoning. In addition to the technical contributions summarized above in \S\ref{section: Additive-Multiplicative Divide}, we also canvassed some of the empirical considerations that have motivated special attention to comparative and other `purely qualitative' judgments. At present it is not clear that the latter enjoy any distinctive empirical status, and if anything, the relevant empirical boundaries may better track the additive-multiplicative distinction that has been our focus here. 

It is certainly not an aim of this paper to discourage the use and exploration of qualitative probability. On the contrary, as we have discussed, such systems present rich opportunities for systematic logical investigation. Moreover, a number of recent authors have found such languages useful for stating and evaluating epistemological principles in a general and relatively neutral manner (e.g., \citealt{narens07,Eva2019,LiuBJPS,Mayo}, \emph{inter multa et alia}). This is especially evident in settings where agreement with a probability measure is not assumed, or is even precluded (e.g., \citealt{Dubois1988,diBella}). These considerations are rather different from many of the original motivations that  prompted study of such systems, viz. simplicity and empirical tractability. From a logical perspective---concerning complexity, axiomatizability, and expressivity---and potentially also from an empirical perspective, prohibition on the use of simple numerical primitives appears Procrustean. 

A number of significant technical questions and directions on this subject merit further exploration. In addition to the outstanding issue of substantiating Domotor's \citeyearpar{Domotor} argument for representation of comparative conditional probability (represented here as $\mathcal{L}_{\text{cond}}$), we also mention the following directions:
\begin{itemize}
    \item Many authors have argued that the comparability of $\succsim$ should be rejected, on both normative and empirical grounds \citep{Keynes1921,fine1973theories,Walley,Gaifman2009,LiuBJPS}. All of the systems presented in this paper could alternatively be interpreted relative to \emph{sets} of probability measures. Logical systems for such settings have received considerable investigation (see, e.g., \citealt{Ding2021} for an overview). It seems plausible that many of the results here would extend with little change; however, our proof methods often employed normal forms whose exact character depended on comparability. Working through this setting in detail would be worthwhile. 
    \item It is easy to imagine natural extensions of the languages considered here, including varieties of explicit quantification. For instance, \cite{Speranski} studies probability logics extending (what we here called) $\mathcal{L}_{\text{add}}$ with quantification over events, while \cite{Abadi} explore a range of \emph{first order} extensions of $\mathcal{L}_{\text{add}}$ and (what we called) $\mathcal{L}_{\text{poly}}$ with quantification over both field terms and (a second sort of) objects. Generally speaking, these extensions result in a significant complexity increases, leading to undecidability (in the best cases), $\Pi^1_1$-hardness, and even $\Pi^1_{\infty}$-hardness. However, there are ways of curtailing this complexity (e.g., by restricting to bounded finite domains), and it could be enlightening to investigate our more refined space of languages in those settings. 
    \item A potentially more modest, but quite useful extension to any of these systems would be to add \emph{exponentiation}, e.g., the  function $e^x$. As such systems would also encode logarithms, this would allow reasoning about (conditional) entropy as well. It is unclear at present whether adding exponentiation to $\mathcal{L}_{\text{poly}}$ would even result in a decidable system. By a theorem of \cite{Wilkie}, a positive answer to this question would also settle Tarski's \citeyearpar{Tarski1951} well known `exponential function problem' which has been open since the 1940s. Whether the problem may be easier for some weaker systems we have considered here remains to be seen. Moreover, it may be feasible to address questions of axiomatizability without solving that outstanding open problem. 
\end{itemize}
These are just some of the questions that will need to be answered before we have a fully comprehensive understanding of the space of natural probabilistic languages.



\bibliographystyle{apalike}
\bibliography{refs}{}

\appendix

\section*{Appendix: Finite Cancellation and $\mathsf{AX}_{\textnormal{add}}$} \label{section:fincan}

To help clarify the relationship between $\mathsf{AX}_{\textnormal{add}}$ and the standard axiomatization of comparative probability, one can verify the soundness of the finite cancellation scheme by deriving it in our additive system:

\begin{proposition} Each instance of $\mathsf{FinCan}_n$ is derivable in $\mathsf{AX}_{\textnormal{add}}$.
\end{proposition}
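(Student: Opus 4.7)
The plan is to derive $\mathsf{FinCan}_n$ by reducing the balancedness condition to an equation between formal sums of probability terms, and then combining this with the antecedent hypotheses via the derived cancellation principles of Lemma~\ref{lemma:add}.

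First, I would translate each basic term to a sum over state descriptions. Let $\Delta$ be the set of complete state descriptions over the propositional atoms appearing in $\alpha_1,\dots,\alpha_n,\beta_1,\dots,\beta_n$. Repeated application of $\mathsf{Add}$ and $\mathsf{Dist}$ (together with $\mathsf{Assoc}$, $\mathsf{Comm}$, and $\mathsf{Repl}$) yields $\mathsf{AX}_{\textnormal{add}} \vdash \mathbf{P}(\alpha_i) \approx \sum_{\delta \models \alpha_i} \mathbf{P}(\delta)$ for each $i$, and analogously for each $\beta_i$. Summing over $i$ and reassociating, we obtain
\[
\vdash_{\mathsf{AX}_{\textnormal{add}}} \sum_{i=1}^n \mathbf{P}(\alpha_i) \approx \sum_{\delta \in \Delta} k_\alpha(\delta)\,\mathbf{P}(\delta),
\]
where $k_\alpha(\delta) = |\{i : \delta \models \alpha_i\}|$ is the number of $\alpha_i$'s implied by $\delta$, and $k_\alpha(\delta)\mathbf{P}(\delta)$ abbreviates the $k_\alpha(\delta)$-fold sum. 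The analogous derivation holds for the $\beta_i$'s, producing coefficients $k_\beta(\delta)$.

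Second, I would use the balancedness assumption. Semantically, $(\alpha_1,\dots,\alpha_n)\equiv_0(\beta_1,\dots,\beta_n)$ means precisely that $k_\alpha(\delta) = k_\beta(\delta)$ for every $\delta \in \Delta$. Hence the right-hand sides of the two expansions are syntactically identical (up to order and bracketing), and by $\mathsf{Assoc}$, $\mathsf{Comm}$, and transitivity of $\approx$ we conclude
\[
\vdash_{\mathsf{AX}_{\textnormal{add}}} \sum_{i=1}^n \mathbf{P}(\alpha_i) \approx \sum_{i=1}^n \mathbf{P}(\beta_i). \tag{$\ast$}
\]

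Finally, I would combine the hypotheses with $(\ast)$. Given $\mathbf{P}(\alpha_i) \succsim \mathbf{P}(\beta_i)$ for $i < n$, iterated application of $\mathsf{Comb}$ (from Lemma~\ref{lemma:add}) gives $\sum_{i<n}\mathbf{P}(\alpha_i) \succsim \sum_{i<n}\mathbf{P}(\beta_i)$. Setting $A = \sum_{i<n}\mathbf{P}(\alpha_i)$ and $B = \sum_{i<n}\mathbf{P}(\beta_i)$, the equation $(\ast)$ becomes $A + \mathbf{P}(\alpha_n) \approx B + \mathbf{P}(\beta_n)$; then $\mathsf{1Canc}$ (applied first with $A$ on both sides using $A \succsim B$, and then to cancel the common term) yields $\mathbf{P}(\beta_n) \succsim \mathbf{P}(\alpha_n)$, as required.

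The only nontrivial part is step one: making precise the claim that balancedness of the two sequences translates into a \emph{derivable} equality between their formal sum-expansions. This is essentially bookkeeping with $\mathsf{Assoc}$, $\mathsf{Comm}$, and $\mathsf{Repl}$, but it is where the combinatorial content of the balancedness condition $\equiv_0$ is cashed in at the syntactic level; once this is done, the final algebraic manipulation via $\mathsf{Comb}$ and $\mathsf{1Canc}$ is immediate.
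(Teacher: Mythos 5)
Your overall architecture---expand each $\mathbf{P}(\alpha_i)$ into a sum over state descriptions, combine the $n-1$ hypotheses, and finish with a cancellation---matches the paper's proof, and your closing step is sound: from $(\ast)$ and $\sum_{i<n}\mathbf{P}(\alpha_i)\succsim\sum_{i<n}\mathbf{P}(\beta_i)$ one gets $\mathbf{P}(\beta_n)\succsim\mathbf{P}(\alpha_n)$ by a single instance of $\mathsf{Contr}$ (or by your $\mathsf{1Canc}$-plus-transitivity route).

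The gap is in your second step, precisely the part you set aside as bookkeeping. In this paper the condition $(\alpha_1,\dots,\alpha_n)\equiv_0(\beta_1,\dots,\beta_n)$ is not a metalinguistic side condition on the schema: it is the object-language formula $\bigl(\bigvee_{\delta\in\mathcal{B}}\delta\bigr)\approx\top$ occurring as a conjunct in the antecedent of $\mathsf{FinCan}_n$. It does \emph{not} say that $k_\alpha(\delta)=k_\beta(\delta)$ for every state description $\delta$; it says that the unbalanced state descriptions jointly receive probability zero. In general there exist unbalanced $\delta$ with $k_\alpha(\delta)\neq k_\beta(\delta)$ (the hypothesis is a nontrivial constraint on the measure precisely because of them), so the two expansions $\sum_{\delta} k_\alpha(\delta)\,\mathbf{P}(\delta)$ and $\sum_{\delta} k_\beta(\delta)\,\mathbf{P}(\delta)$ are \emph{not} syntactically identical, and $(\ast)$ cannot be reached by $\mathsf{Assoc}$, $\mathsf{Comm}$, and transitivity alone. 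What is missing is a derivation, from the hypothesis $\equiv_0$, that $\mathbf{P}(\delta)\approx\mathsf{0}$ for each unbalanced $\delta$: using $\mathsf{Add}$ and $\mathsf{Dist}$ the hypothesis yields $\sum_{\delta\in\mathcal{B}}\mathbf{P}(\delta)\approx\sum_{\delta\in\Delta}\mathbf{P}(\delta)$, whence repeated $\mathsf{1Canc}$/$\mathsf{Contr}$ gives $\mathsf{0}\approx\sum_{\delta\in\Delta-\mathcal{B}}\mathbf{P}(\delta)$ and then $\mathbf{P}(\delta)\approx\mathsf{0}$ for each unbalanced $\delta$. This is exactly the paper's first observation. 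Once those summands are discarded via $\mathsf{Repl}$ and $\mathsf{Zero}$, the remaining coefficients do agree (balanced $\delta$ satisfy $k_\alpha(\delta)=k_\beta(\delta)$ by definition), your $(\ast)$ becomes derivable, and the rest of your argument goes through.
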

\begin{proof} We use all of the axioms of $\mathsf{AX}_{\textnormal{add}}$, and we appeal to $\mathsf{Assoc}$, $\mathsf{Comm}$, and the rules and axioms of $\mathsf{AX}_{\textnormal{base}}$ (such as Boolean reasoning) without mention. At a high level, the idea of the proof is simply to combine all the inequalities together (using $\mathsf{2Canc}$) and then cancel a number of terms from both side (using $\mathsf{Contr}$). 

The first observation is that $(\alpha_1,\dots,\alpha_n)\equiv_0(\beta_1,\dots,\beta_n)$ implies that $\mathbf{P}(\delta)\approx \mathsf{0}$ for every \emph{unbalanced} state description $\delta$ over these $2n$ formulas. By $\mathsf{Dist}$ and $\mathsf{Add}$, $(\alpha_1,\dots,\alpha_n)\equiv_0(\beta_1,\dots,\beta_n)$ allows us to obtain  $\sum_{\delta \in \mathcal{B}}\mathbf{P}(\delta) \approx \sum_{\delta\in\Delta}\mathbf{P}(\delta)$. By multiple applications of $\mathsf{Contr}$---and in particular its consequence,  $\mathsf{1Canc}$---we can cancel all of the summands from the left side, ending (by another application of $\mathsf{Contr}$) with $\mathsf{0}\approx \sum_{\delta \in \Delta-\mathcal{B}}\mathbf{P}(\delta)$. By yet further applications of $\mathsf{Contr}$ we conclude that $\mathbf{P}(\delta)\approx\mathsf{0}$ for each unbalanced $\delta \in \Delta-\mathcal{B}$.

Let $\Delta_{\alpha_i}$ be those state descriptions with $\alpha_i$ occurring positively (i.e., not negated), and likewise for $\Delta_{\beta_i}$. 
By $\mathsf{Add}$ we know that $\mathbf{P}(\alpha_i) \approx \sum_{\delta \in \Delta_{\alpha_i}}\mathbf{P}(\delta)$, and likewise for each $\delta$. By the previous observation that $\mathbf{P}(\delta)\approx\mathsf{0}$ for unbalanced sequences, we can conclude that $\mathbf{P}(\alpha_i) \approx \sum_{\delta \in \mathcal{B}_{\alpha_i}}\mathbf{P}(\delta)$, with $\mathcal{B}_{\alpha_i} = \Delta_{\alpha_i} \cap \mathcal{B}$. So each remaining conjunct $\alpha_i \succsim \beta_i$ in the antecedent can be rewritten as $\sum_{\delta \in \mathcal{B}_{\alpha_i}}\mathbf{P}(\delta) \succsim \sum_{\delta \in \mathcal{B}_{\beta_i}}\mathbf{P}(\delta)$. By axiom $\mathsf{2Canc}$ we can  combine all of these sums together, producing an inequality of the form:
\begin{eqnarray} \sum_{\delta \in \mathcal{B}_{\alpha_1}}\mathbf{P}(\delta) + \dots + \sum_{\delta \in \mathcal{B}_{\alpha_{n-1}}}\mathbf{P}(\delta) & \succsim & \sum_{\delta \in \mathcal{B}_{\beta_1}}\mathbf{P}(\delta) + \dots + \sum_{\delta \in \mathcal{B}_{\beta_{n-1}}}\mathbf{P}(\delta). \label{combin}
\end{eqnarray} The crux of the proof is now to consider which terms will be cancelled from both sides (once again appealing to $\mathsf{Contr}$). Any state description $\delta\in \mathcal{B}_{\beta_n}$ will appear once more on the left than on the right, so $\mathbf{P}(\delta)$ will still remain as a summand on the left. All other terms on the left will be cancelled. Similarly for the right, we will end up with the sum of terms for state descriptions in $\mathcal{B}_{\alpha_n}$, so that (\ref{combin}) becomes simply \begin{eqnarray*} \sum_{\delta \in \mathcal{B}_{\beta_n}}\mathbf{P}(\delta)  & \succsim & \sum_{\delta \in \mathcal{B}_{\alpha_n}}\mathbf{P}(\delta),
\end{eqnarray*} or, in other words, by the observations above, our desired consequent $\beta_n \succsim \alpha_n$.
\end{proof} 

The next observation shows that de Finetti's original proposal captures the first three levels of finite cancellation:
\begin{proposition} $\mathsf{Quasi}$ and $\mathsf{FinCan}_3$ are equivalent over $\mathsf{AX}_{\text{base}}$. 
\end{proposition}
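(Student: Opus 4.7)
The proof splits into two directions.

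For $\mathsf{FinCan}_3 \Rightarrow \mathsf{Quasi}$, I plan to deduce each direction of $\mathsf{Quasi}$ by a trivialising instantiation in which the middle pair of $\mathsf{FinCan}_3$ is taken to be $(\top,\top)$. To derive $\alpha \succsim \beta \to (\alpha \wedge \neg \beta) \succsim (\beta \wedge \neg \alpha)$, I will instantiate with the triples $(\alpha,\top,\beta \wedge \neg\alpha)$ and $(\beta,\top,\alpha \wedge \neg\beta)$. Balancedness is checked by inspecting the four state descriptions over $\{\alpha,\beta\}$: the middle $\top$ contributes $+1$ to each side and cancels, and in each state exactly one of $\alpha$, $\beta \wedge \neg \alpha$ is satisfied iff exactly one of $\beta$, $\alpha \wedge \neg\beta$ is. The antecedents $\alpha \succsim \beta$ (given) and $\top \succsim \top$ (reflexivity, from $\mathsf{Dist}$) then yield the conclusion $(\alpha \wedge \neg\beta)\succsim(\beta \wedge \neg\alpha)$. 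The converse direction is obtained symmetrically from the triples $(\alpha \wedge \neg \beta, \top, \beta)$ and $(\beta \wedge \neg \alpha, \top, \alpha)$.

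For $\mathsf{Quasi} \Rightarrow \mathsf{FinCan}_3$, the strategy is to combine the two premises into a single inequality via $\mathsf{2CancQ}$ (Lemma~\ref{lemma:combq}) and then read off the conclusion using balancedness. First, apply $\mathsf{Quasi}$ to each premise to obtain the reduced forms $(\alpha_i \wedge \neg\beta_i)\succsim(\beta_i \wedge \neg\alpha_i)$ for $i=1,2$, whose two sides are already logically disjoint. Next, isolate the cross-premise overlaps by setting $\varepsilon := (\alpha_1 \wedge \neg\beta_1) \wedge (\beta_2 \wedge \neg\alpha_2)$ and $\zeta := (\alpha_2 \wedge \neg\beta_2) \wedge (\beta_1 \wedge \neg\alpha_1)$, and take $\alpha, \gamma, \beta, \delta$ to be the remainders of $\alpha_1 \wedge \neg\beta_1$, $\beta_1 \wedge \neg\alpha_1$, $\alpha_2 \wedge \neg\beta_2$, $\beta_2 \wedge \neg\alpha_2$ after removing these overlaps. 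Using balancedness as a propositional fact (unbalanced state descriptions over $\{\alpha_i,\beta_i\}$ are logically $\bot$), verify that these six pieces are pairwise unsatisfiable. The two reduced premises then fit the exact form required by $\mathsf{2CancQ}$, namely $(\alpha \vee \varepsilon)\succsim(\gamma \vee \zeta)$ and $(\beta \vee \zeta)\succsim(\delta \vee \varepsilon)$, giving $(\alpha \vee \beta)\succsim(\gamma \vee \delta)$. A further appeal to balancedness establishes the propositional equivalences $\alpha \vee \beta \equiv \beta_3 \wedge \neg \alpha_3$ and $\gamma \vee \delta \equiv \alpha_3 \wedge \neg \beta_3$; one final application of $\mathsf{Quasi}$ then converts the combined inequality into the desired $\beta_3 \succsim \alpha_3$.

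The hard part will be verifying the two Boolean identities at the end, together with the pairwise disjointness of the six pieces used to invoke $\mathsf{2CancQ}$. Both reduce to a finite case analysis over the at most twenty types of balanced state descriptions, classified by pairs of size-matched subsets $S_\alpha, S_\beta \subseteq \{1,2,3\}$ specifying membership in each $\alpha_i$ and $\beta_i$. The critical input is that balancedness, via the definition $\bigvee_{\delta\in\mathcal{B}}\delta\approx\top$ and $\mathsf{Dist}$, forces every unbalanced state description to be propositionally contradictory, removing the potentially problematic overlap cells. The case analysis stays tractable precisely because $n=3$ keeps the combinatorics small; for $n\geq 4$ one would need to combine more than two premises, where $\mathsf{2CancQ}$, essentially a two-premise combination rule, is no longer sufficient.
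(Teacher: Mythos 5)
Your first direction ($\mathsf{FinCan}_3\Rightarrow\mathsf{Quasi}$) is fine: for the particular instances you choose, every propositionally satisfiable state description is balanced, so the hypothesis $\equiv_0$ is itself derivable from $\mathsf{Dist}$, and the padding pair $(\top,\top)$ together with reflexivity does exactly what you want. (The paper states this proposition without proof, so I am judging the argument on its own terms.)

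The second direction has a genuine gap, sitting exactly where you locate the ``hard part.'' The hypothesis $(\alpha_1,\alpha_2,\alpha_3)\equiv_0(\beta_1,\beta_2,\beta_3)$ is the \emph{probabilistic} statement $\bigvee_{\delta\in\mathcal{B}}\delta\approx\top$; it does not make the unbalanced state descriptions ``logically $\bot$,'' only provably null (one can derive $\neg\bigvee_{\delta\in\mathcal{B}}\delta\approx\mathsf{0}$ from $\mathsf{Quasi}$ and $\mathsf{Dist}$, but no axiom turns a consistent formula into a contradiction). Consequently your six pieces are not pairwise unsatisfiable: $\alpha$ and $\beta$ overlap on the consistent (unbalanced) region $\alpha_1\wedge\neg\beta_1\wedge\alpha_2\wedge\neg\beta_2$, and $\gamma$ and $\delta$ overlap on $\beta_1\wedge\neg\alpha_1\wedge\beta_2\wedge\neg\alpha_2$, so the hypothesis of $\mathsf{2CancQ}$ (Lemma~\ref{lemma:combq}) is not met. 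For the same reason the closing identities $\alpha\vee\beta\equiv\beta_3\wedge\neg\alpha_3$ and $\gamma\vee\delta\equiv\alpha_3\wedge\neg\beta_3$ fail propositionally (take $\alpha_1$ true and all other atoms false: $\alpha$ holds but $\beta_3\wedge\neg\alpha_3$ does not); they hold only on the balanced region. This is precisely the obstruction the paper flags when it notes that quasi-additive reasoning breaks down ``when Boolean expressions are not incompatible.'' The argument is repairable: writing $B=\bigvee_{\delta\in\mathcal{B}}\delta$, the assumption $B\approx\top$ yields $\theta\wedge B\approx\theta$ for every $\theta$ using only $\mathsf{Quasi}$, $\mathsf{Dist}$, and transitivity; replacing each of your six pieces $\theta$ by $\theta\wedge B$ makes them genuinely pairwise unsatisfiable and turns the two closing identities into propositional ones, after which your use of $\mathsf{2CancQ}$ and the final application of $\mathsf{Quasi}$ go through. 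As written, however, the step ``verify that these six pieces are pairwise unsatisfiable'' rests on a false premise.
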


\section*{Appendix: Quadratic Qualitative Probability Structures for $n=2$}


\begin{proposition}
The Domotor axioms $\mathsf{Q1}$--$\mathsf{Q4}$, the $n=2$ instance of $\mathsf{Q5}$, and $\mathsf{Q6}$ are complete for models with a binary event space $\Omega = \{0, 1\}$.
\end{proposition}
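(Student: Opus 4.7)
The approach is to first obtain a symmetric bilinear form representing $\succeq$ on product indicators, using the standard Kraft--Scott techniques adapted to our setting, and then to use axiom $\mathsf{Q5}_2$ to force this form into the rank-$1$ shape required for product representability by a probability measure on a binary space.

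For the first step, I would apply Scott's Lemma (Lemma~\ref{replemma}) to the set of indicators $\{\ind{A \times B} : A,B \in \mathcal{P}(\Omega)\} \subset \mathbb{R}^{\Omega \times \Omega}$: axioms $\mathsf{Q4}$ and $\mathsf{Q6}$ supply conditions (a) and (b) of that Lemma, yielding a linear functional $\hat{\Phi}$ representing $\succeq$ on product events. Using $\mathsf{Q3}$, the associated bilinear form on $\mathbb{R}^\Omega$ may be taken symmetric; together with $\mathsf{Q1}$, $\mathsf{Q2}$, and after normalization, we obtain a symmetric matrix
\[
M = \begin{pmatrix} \alpha & \gamma \\ \gamma & \beta \end{pmatrix}, \quad \alpha, \beta, \gamma \geq 0, \quad \alpha + 2\gamma + \beta = 1.
\]
The remaining task is to produce $p \in [0,1]$ such that the rank-$1$ matrix $M_p = \begin{pmatrix} p^2 & p(1-p) \\ p(1-p) & (1-p)^2 \end{pmatrix}$ is order-equivalent to $M$ on product indicators; the measure $\mathbb{P}(\{0\}) = p$, $\mathbb{P}(\{1\}) = 1-p$ will then product-represent $\succeq$.

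The heart of the proof is to use $\mathsf{Q5}_2$ to pin down the relative magnitudes of $\alpha, \beta, \gamma$. By $\mathsf{Q4}$, assume without loss of generality that $\alpha \geq \beta$. Applying $\mathsf{Q5}_2$ with $A_1 = \{1\}, B_1 = \Omega, A_2 = \{0\}, B_2 = \{1\}$, with $\pi$ the swap and $\tau$ the identity, the premise $\{0\} \times \Omega \succeq \{1\} \times \Omega$ holds by $\alpha \geq \beta$, and the conclusion yields $\gamma \succeq \beta$. An analogous instance with $A_1 = \{0\}, B_1 = \{0\}, A_2 = \{1\}, B_2 = \{1\}$ shows that $\gamma \geq \alpha$ forces $\beta \geq \gamma$, hence (given $\alpha \geq \beta$) $\alpha = \beta = \gamma$; contrapositively, $\alpha > \beta$ implies $\gamma < \alpha$ strictly, via the strict version of the axiom. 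Thus $\beta \leq \gamma \leq \alpha$. A further instance with $A_1 = \Omega, B_1 = \{1\}, A_2 = \{0\}, B_2 = \Omega$, $\pi$ swap, $\tau$ identity rules out the rank-deficient case $\beta = 0$, $\gamma > 0$: the premise would be $\gamma \succeq \gamma$ (true), and the conclusion $\{0\} \times \Omega \succeq \Omega \times \Omega$, i.e.\ $\alpha + \gamma \geq 1$, forces $\gamma = 0$.

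With these constraints, the order on the seven nontrivial values $\{0, \beta, \gamma, \alpha, \beta+\gamma, \alpha+\gamma, 1\}$ is fully determined except for the comparison of $\alpha$ with $\beta + \gamma$. In the generic subcase $\alpha > \beta > 0$, the three possibilities $\alpha < \beta + \gamma$, $\alpha = \beta + \gamma$, and $\alpha > \beta + \gamma$ are matched respectively by choosing any $p \in (1/2, \phi^{-1})$, $p = \phi^{-1}$ (where $\phi = (1+\sqrt{5})/2$), or any $p \in (\phi^{-1}, 1)$: the rank-$1$ order on $\{0, q^2, pq, p^2, q, p, 1\}$ runs through exactly these three order types as $p$ varies in $(1/2, 1)$. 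Degenerate subcases are handled by $p = 1/2$ (when $\alpha = \beta$, which by the above forces $\alpha = \beta = \gamma = 1/4$) and $p = 1$ (when $\beta = 0$, forcing $\gamma = 0$ and hence $\alpha = 1$). The main obstacle lies in locating the right instances of $\mathsf{Q5}_2$ to derive $\beta \leq \gamma \leq \alpha$ and to exclude $\beta = 0, \gamma > 0$; once these are secured, the remaining case analysis is routine thanks to the smallness of $|\Omega|$.
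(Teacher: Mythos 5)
Your proposal is correct and follows essentially the same route as the paper's proof: apply Scott's Lemma to the product indicators (using $\mathsf{Q1}$--$\mathsf{Q4}$ and $\mathsf{Q6}$) to obtain a symmetric, nonnegative, normalized bilinear representation, and then use $\mathsf{Q5}_2$ to force the induced total preorder to coincide with one realized by a rank-$1$ form, i.e.\ by a product measure. The only difference is bookkeeping: the paper first enumerates the nine rank-$1$-representable preorders by plotting the relevant polynomials in the parameter $x$ and then matches each axiom-consistent order to one of them by casework, whereas you extract the constraints $\beta \le \gamma \le \alpha$ (and the exclusion of $\beta = 0 < \gamma$) directly from explicit instances of $\mathsf{Q5}_2$ and read off the same nine order types.
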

\begin{proof}
We aim to show there is a positive symmetric bilinear functional $\Phi: \mathbb{R}^2 \times \mathbb{R}^2 \to \mathbb{R}$ representing the order that is additionally rank $1$, and therefore factors as a product of measures. Q1--Q4, Q6 give everything except rank $1$.
The matrix of such a rank $1$ is, up to normalization,
\begin{align*}
\text{either }
    \begin{bmatrix}
1 & x \\
x & x^2
\end{bmatrix}	\text{ or }
    \begin{bmatrix}
0 & 0 \\
0 & 1
\end{bmatrix}
\end{align*}
where $x \ge 0$.
Let $\textbf{0} = \{0\}$ and $\textbf{1} = \{1\}$.
The latter matrix represents the total preorder
\begin{align}
    (\varnothing, \cdot) \sim (\textbf{0}, \textbf{0}) \sim (\textbf{0}, \Omega) \sim (\textbf{0}, \textbf{1}) \prec (\textbf{1}, \textbf{1}) \sim (\textbf{1}, \Omega) \sim (\Omega, \Omega) \label{order:degenorder}
\end{align}
where the $\cdot$ in $(\varnothing, \cdot)$ stands for any (or every) one of the four subsets of $\Omega$.

In the case of the former matrix, for $A, B \subset \Omega$ we find
\begin{align}
    \Phi(\mathbbm{1}_A, \mathbbm{1}_B) = (1+x^2) \sum_{a \in A} \mathfrak{x}(a) \sum_{b \in B} \mathfrak{x}(b) \label{varphipolynomial}
\end{align}
where the term $\mathfrak{x}(\omega)$ for $\omega \in \Omega$ is defined to be $1$ if $\omega = 0$ and $x$ if $\omega = 1$.
Considering the various (unordered) choices of $A, B$, we see that \eqref{varphipolynomial} gives $10$ polynomials in $x$, in fact $7$ if we observe that all corresponding to $(\varnothing, \cdot)$ are identically $0$.
Below we plot these curves as a function of $x$. Without loss we have divided each by the prefactor $1+x^2$ appearing in \eqref{varphipolynomial}.

\includegraphics[scale=0.8]{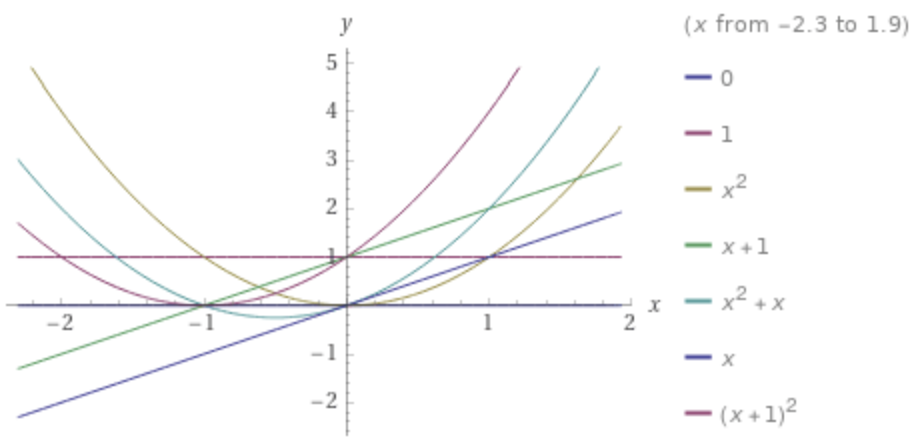}
By inspecting\footnote{This can be made fully rigorous via e.g. Sturm sequences.} the intersections and their induced subdivisions in the first quadrant\footnote{The order \eqref{order:degenorder} arising from the degenerate matrix $\begin{bmatrix}
0 & 0 \\
0 & 1
\end{bmatrix}$ corresponds to $x = +\infty$ on the plot.} we count $8$ total preorders
\begin{align}
    (\varnothing, \cdot) \sim (\textbf{1}, \Omega) \sim (\textbf{0}, \textbf{1}) \sim (\textbf{1}, \textbf{1}) \prec (\textbf{0}, \textbf{0}) \sim (\textbf{0}, \Omega) \sim (\Omega, \Omega) \label{order:nondegenfirst}\\
    (\varnothing, \cdot) \prec (\textbf{1}, \textbf{1}) \prec(\textbf{0}, \textbf{1}) \prec (\textbf{1}, \Omega) \prec (\textbf{0}, \textbf{0}) \prec (\textbf{0}, \Omega) \prec (\Omega, \Omega) \label{order:nondegen2}\\
    (\varnothing, \cdot) \prec (\textbf{1}, \textbf{1}) \prec (\textbf{0}, \textbf{1}) \prec (\textbf{0}, \textbf{0}) \sim (\textbf{1}, \Omega) \prec (\textbf{0}, \Omega) \prec (\Omega, \Omega) \label{order:nondegen3}\\
    (\varnothing, \cdot) \prec (\textbf{1}, \textbf{1}) \prec (\textbf{0}, \textbf{1}) \prec (\textbf{0}, \textbf{0}) \prec (\textbf{1}, \Omega) \prec (\textbf{0}, \Omega) \prec (\Omega, \Omega) \label{order:nondegen4}\\
    (\varnothing, \cdot) \prec (\textbf{0}, \textbf{0}) \sim (\textbf{1}, \textbf{1}) \sim (\textbf{0}, \textbf{1}) \prec (\textbf{0}, \Omega) \sim (\textbf{1}, \Omega) \prec (\Omega, \Omega) \label{order:nondegen5}\\
    (\varnothing, \cdot) \prec (\textbf{0}, \textbf{0}) \prec (\textbf{0}, \textbf{1}) \prec (\textbf{1}, \textbf{1}) \prec (\textbf{0}, \Omega) \prec (\textbf{1}, \Omega) \prec (\Omega, \Omega) \label{order:nondegen6}\\
    (\varnothing, \cdot) \prec (\textbf{0}, \textbf{0}) \prec (\textbf{0}, \textbf{1}) \prec (\textbf{1}, \textbf{1}) \sim (\textbf{0}, \Omega) \prec (\textbf{1}, \Omega) \prec (\Omega, \Omega) \label{order:nondegen7}\\
    (\varnothing, \cdot) \prec (\textbf{0}, \textbf{0}) \prec (\textbf{0}, \textbf{1}) \prec (\textbf{0}, \Omega) \prec (\textbf{1}, \textbf{1}) \prec (\textbf{1}, \Omega) \prec (\Omega, \Omega)  \label{order:nondegenlast}
\end{align}

Thus we have $9$ total preorders \eqref{order:degenorder}, \eqref{order:nondegenfirst}--\eqref{order:nondegenlast} representable by a rank $1$.

We claim these are exactly the ones satisfying the stated Domotor axioms. The soundness direction is straightforward while the completeness direction can be shown via casework. Note that the $n=2$ instance of $\mathsf{Q5}$ is just monotonicity (in both arguments, by symmetry), namely that
\begin{align*}
    (A, B) \preceq (A, C) \Rightarrow (D, B) \preceq (D, C)
\end{align*}
and analogously in the first argument.
This presumes that $(\varnothing, \Omega) \prec (A, B)$ and $(\varnothing, \Omega) \prec (D, C)$.
We also have strict monotonicity.
Below we will generally take $\mathsf{Q1}$--$\mathsf{Q4}$ for granted, which just amount to the axioms of a nondegenerate total preorder, symmetric on the pairs.
Our cases are:
\begin{enumerate}
\item If $(\textbf{0}, \Omega) \prec (\textbf{1}, \Omega)$:
\begin{enumerate}
    \item If $(\varnothing, \Omega) \sim (\textbf{0}, \textbf{0})$: conclude order \eqref{order:degenorder}, by $\mathsf{Q5}$ and $\mathsf{Q6}$.
    \item If $(\varnothing, \Omega) \prec (\textbf{0}, \textbf{0})$: by $\mathsf{Q5}$ conclude $(\textbf{0}, \textbf{0}) \prec (\textbf{0}, \textbf{1}) \prec (\textbf{1}, \textbf{1})$. By $\mathsf{Q6}$ conclude $(\textbf{1}, \Omega) \prec (\Omega, \Omega)$.
\begin{enumerate}
    \item If $(\textbf{1}, \textbf{1}) \prec (\textbf{0}, \Omega)$: order \eqref{order:nondegen6}
    \item If $(\textbf{1}, \textbf{1}) \sim (\textbf{0}, \Omega)$: order \eqref{order:nondegen7}
    \item If $(\textbf{1}, \textbf{1}) \succ (\textbf{0}, \Omega)$: order \eqref{order:nondegenlast}
\end{enumerate}

\end{enumerate}
\item If $(\textbf{0}, \Omega) \sim (\textbf{1}, \Omega)$ conclude order \eqref{order:nondegen5}, by repeated applications of $\mathsf{Q5}$ and $\mathsf{Q6}$.
\item If $(\textbf{0}, \Omega) \succ (\textbf{1}, \Omega)$:
\begin{enumerate}
\item If $(\varnothing, \Omega) \sim (\textbf{1}, \textbf{1})$: conclude order \eqref{order:nondegenfirst} by $\mathsf{Q5}$, $\mathsf{Q6}$.
\item If If $(\varnothing, \Omega) \prec (\textbf{1}, \textbf{1})$: by $\mathsf{Q5}$ conclude $(\textbf{1}, \textbf{1}) \prec (\textbf{0}, \textbf{1}) \prec (\textbf{0}, \textbf{0})$. By $\mathsf{Q6}$ conclude $(\textbf{0}, \Omega) \prec (\Omega, \Omega)$.
\begin{enumerate}
    \item If $(\textbf{0}, \textbf{0}) \prec (\textbf{1}, \Omega)$: order \eqref{order:nondegen4}
    \item If $(\textbf{0}, \textbf{0}) \sim (\textbf{1}, \Omega)$: order \eqref{order:nondegen3}
    \item If $(\textbf{0}, \textbf{0}) \succ (\textbf{1}, \Omega)$: order \eqref{order:nondegen2}
\end{enumerate}
\end{enumerate}
\end{enumerate}
\end{proof}

\end{document}